\def\calP{\mathcal{P}}
\def\calR{\mathcal{R}}
\def\calS{\mathcal{S}}
\def\calT{\mathcal{T}}
\numberwithin{equation}{section}
\title{\Large{\uppercase{\bf
Cancellation and regularity for planar, $\mathbf{3}$-connected Kronecker products
}}}
\author{\Large{Ruben De March and Riccardo W. Maffucci}}
\date{}
\newcommand{\Addresses}{{
		\footnotesize
		R.~De~March \textsc{}\par\nopagebreak\vspace{-0.35cm}
		\textit{E-mail address}, R.~De~March: \href{mailto:rubendemarch@gmail.com}{\texttt{rubendemarch@gmail.com}}
  
		R.W.~Maffucci, \textsc{Dipartimento di Matematica, Universit\`a di Torino\\\indent Via Carlo Alberto 10, Turin 10123, Italy}\par\nopagebreak\vspace{-0.35cm}
		\textit{E-mail address}, R.W.~Maffucci: \href{mailto:riccardowm@hotmail.com}{\texttt{riccardowm@hotmail.com}} \ (corresponding author)

  }}
\def\q{\hspace{0.04cm}\square\hspace{0.04cm}}
\def\w{\wedge}
\newtheorem{thm}{Theorem}[section]
\newtheorem{lemma}[thm]{Lemma}
\newtheorem{prop}[thm]{Proposition}
\newtheorem{cor}[thm]{Corollary}
\newtheorem{defin}[thm]{Definition}
\newtheorem{ex}[thm]{Example}
\begin{document}
\titleformat{\section}
  {\Large\scshape}{\thesection}{1em}{}
\titleformat{\subsection}
  {\large\scshape}{\thesubsection}{1em}{} 
\maketitle

\vspace{-0.75cm}
{\centering\small{\textit{Dedicated to Prof. U. Cerruti on the occasion of his 75\textsuperscript{th} birthday}}\par}
\vspace{0.75cm}
\Addresses

\begin{abstract}
We investigate several properties of Kronecker (direct, tensor) products of graphs that are planar and $3$-connected (polyhedral, $3$-polytopal). This class of graphs was recently characterised and constructed by the second author \cite{mafkpr}.

Our main result is that cancellation holds for the Kronecker product of graphs when the product is planar and $3$-connected (it is known that Kronecker cancellation may fail in general). Equivalently, polyhedral graphs are Kronecker products in at most one way. This is a special case of the deep and interesting question, open in general, of Kronecker product cancellation for simple graphs: when does $A\w C\simeq B\w C$ imply $A\simeq B$?

We complete our investigation on simultaneous products by characterising and constructing the planar graphs that are Cartesian products in two distinct ways, and the planar, $3$-connected graphs that are both Kronecker and Cartesian products.

The other type of results we obtain are in extremal graph theory. We classify the polyhedral Kronecker products that are either face-regular or vertex-regular graphs. The face-regular ones are certain quadrangulations of the sphere, while the vertex-regular ones are certain cubic graphs (duals of maximal planar graphs). We also characterise, and iteratively construct, the face-regular subclass of graphs minimising the number of vertices of degree $3$.

\end{abstract}
\vspace{0.5cm}
{\bf Keywords:} Kronecker product, Direct product, Cancellation, Kronecker cover, Extremal graph theory, Graph transformation, Planar graph, Regular graph, Quadrangulation, Cubic graph, Connectivity, $3$-polytope, Hyper-connectivity.
\\
{\bf MSC(2010):} 05C76, 05C35, 05C10, 05C75, 05C85, 52B05, 52B10.

\section{Introduction}
\subsection{Connectivity, planarity, regularity, and graph products}
In this paper, we use the term `graph' for finite graphs with no multiple edges or loops, and `multigraph' when there may be multiple edges and/or loops. 
A graph is planar if it may be drawn in the plane without any edges intersecting, except at vertices. Given an integer $k\geq 1$, a graph is $k$-connected if it has at least $k+1$ vertices, and however one removes $k-1$ or fewer of them, the resulting graph stays connected. The connectivity of a graph $G$ is $k$ if $G$ is $k$-connected but not $k+1$-connected. We call $G$ semi-hyper-$k$-connected \cite{zhme06,zhan09} if its vertex connectivity is $k$, and moreover every $k$-cut (i.e. separating set of $k$ vertices) removal results in exactly $2$ connected components. One of our points of focus are the planar, semi-hyper-$2$-connected graphs. Note that a planar graph is semi-hyper-$2$-connected if and only if however a planar embedding of $G$ is chosen, there is a region containing every $2$-cut, as in Figure \ref{fig:sh2c}.

\begin{figure}[ht]
\centering
\includegraphics[width=2.75cm]{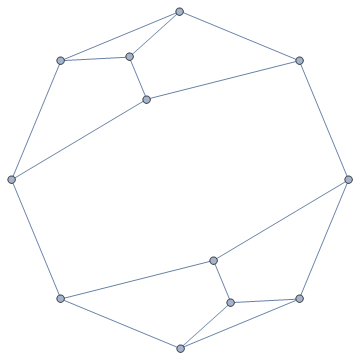}
\caption{A planar, semi-hyper-$2$-connected graph. The external region contains every $2$-cut.}
\label{fig:sh2c}
\end{figure}

The class of planar, $3$-connected graphs is exactly the class of $1$-skeletons (wireframes) of polyhedral solids, by the Rademacher-Steinitz Theorem \cite{radste}. For this reason, these graphs are called $3$-polytopes or polyhedra. They have several interesting properties: they may be immersed in the sphere in one and only one way, as remarked by Whitney \cite{whitco}. They are exactly the class of planar graphs where each region (face) is bounded by a polygon, and moreover two faces intersect either in the empty set, or at a vertex, or at an edge \cite[\S 4, p.~156]{hajetk}. For example, the class of maximal planar graphs is the subclass of polyhedra where each face is a triangle -- the triangulations of the sphere.

For a plane graph $G$ one may define the dual $G^*$, which depends on the planar embedding, and may contain multiple edges and/or loops. However, the dual graph of a $3$-polytope is not only a simple graph, but it is a uniquely determined $3$-polytope. One can compare with the concept of dual polyhedral solid in geometry.

We call a $3$-polytopal graph $s$-\textit{vertex-regular} if each of its vertices has the same degree $s$, i.e.~the graph is regular in the usual sense. We say that a $3$-polytope $\calP$ is $t$-\textit{face-regular} if each face has the same length $t$, meaning that each face is bounded by a $t$-gon. This is equivalent to $\calP^*$ being $t$-vertex-regular. For example, the dual of a maximal planar graph is a cubic (i.e.~$3$-regular) polyhedron. Since $3$-polytopes are planar, it follows from Euler's formula that $s,t\leq 5$. Since they are also $3$-connected, the only admissible values are in fact $3\leq s\leq 5$ and $3\leq t\leq 5$. Indeed, this is the first step in the graph-theoretical proof that there are exactly five polyhedra that are both vertex- and face-regular \cite[Theorem 1.38]{hahimo}, namely the Platonic solids, a fact known since antiquity.

Given two graphs $A,B$, one may define a product graph in several ways. Two of the most commonly studied \cite{haimkl} are the Kronecker $\w$ and Cartesian $\q$ products. One defines for the vertices
\[V(A\w B)=V(A\q B)=V(A)\times V(B),\]
where $\times$ denotes the Cartesian product of sets, and for the edges
\begin{equation*}
E(A\w B)=\{(a_1,b_1)(a_{2},b_2) : a_1a_2\in E(A) \text{ and } b_1b_2\in E(B)\}
\end{equation*}
and
\begin{equation*}
E(A\q B)=\{(a_1,b_1)(a_2,b_2) : (a_1=a_2 \text{ and } b_1b_2\in E(B)) \text{ or } (a_1a_2\in E(A) \text{ and } b_1=b_2)\}.
\end{equation*}

Fixing $B=K_2$, the graph $A\q K_2$ is called the prism over $A$ \cite{bieell} (in this sense, the usual prisms are the prisms over the polygons). The graph $A\w K_2$ is called the Kronecker cover or the double cover of $A$ \cite{wall76,farwal}.

\paragraph{Notation.} 
We will write $C_\ell$ for the $\ell$-gon ($\ell$-cycle)
\[[u_1,u_2,\dots,u_{\ell}], \quad \ell\geq 3.\]
If these indices are considered modulo $\ell$, then we fix $u_0:=u_\ell$, so that for instance $u_{((\ell-1)+1 \mod\ell)}$ refers to $u_\ell$.
\\
We will denote by $P_m$ the elementary path on $m\geq 1$ vertices. For $n\geq 3$, $m\geq 1$ the Cartesian products
\[C_n\q P_m\]
are the so-called $m$-stacked $n$-gonal prisms. They will play an important role in our investigation. For $m\geq 2$, they are $3$-polytopes, a fact that we will henceforth use without mention.
\\
We say that a graph is outerplanar if it is planar and there exists a region containing every vertex. In what follows, $H$ will always denote an outerplanar, Hamiltonian graph, i.e.~$C_\ell$ possibly with some diagonals.
\\
We also define the ladder graph
\[F_{2\ell}:=C_{2\ell}+u_2u_{2\ell-1}+u_3u_{2\ell-2}+\dots+u_{\ell-1}u_{\ell+2}, \qquad \ell\geq 1.\]
Note that
\[F_{2\ell}\simeq P_\ell\q K_2,\]
where $\simeq$ is graph isomorphism. Also note that $F_{2\ell}$ is outerplanar, and it is Hamiltonian for $\ell\geq 2$.
\\
The letters $x,y$ will be reserved for the vertex labels of $K_2$ when considering
the Kronecker product $J\w K_2$. If $a\in V(J)$, we will denote the corresponding vertices of the product as
\[(a,x) \quad \text{and} \quad (a,y),\]
or simply as $ax$ and $ay$ if there is no possibility of confusion. If $ab\in E(J)$, we will denote the corresponding edges of the product as
\[(a,x)(b,y) \quad \text{and} \quad (a,y)(b,x).\]
The letter $\calP$ will always denote a $3$-polytopal graph.

\subsection{Summary of results from \cite{mafkpr}}
\label{sec:prior}
The motivation of this paper comes from the breakthrough results obtained in \cite{mafkpr} on the characterisation and construction of $3$-polytopal Kronecker products, and several further questions arising from these results. The first fact to know is that, if $\calP=A\wedge B$ is a $3$-polytope, then one of $A, B$ is $K_2$ \cite[Proposition 1.1]{mafkpr}. We will summarise the main results of \cite{mafkpr} in the following two statements.

\begin{thm}[{\cite[Theorems 1.3 and 1.4]{mafkpr}}]
\label{thm:0123}
Let $J$ be a planar graph. Then $\calP=J\wedge K_2$ is a polyhedron if and only if one of the following conditions holds:
\begin{enumerate}[label=(\theenumi)]
\setcounter{enumi}{-1}
\item \label{eq:c0} $J$ is of connectivity $2$. All odd regions of $J$, save exactly two, contain a $2$-cut. If $\{a,b\}$ is a $2$-cut in $J$, then $J-a-b$ has exactly two connected components, each containing an odd region;
\item \label{eq:c1} $J$ is a polyhedron with exactly $2$ odd faces, and these odd faces are disjoint;
\item \label{eq:c2} $J$ is a polyhedron with exactly $4$ odd faces, the intersection of any two of these odd faces is non-empty, and the intersection of any three of these odd faces is empty;
\item \label{eq:c3} $J$ is a polyhedron with at least $4$ odd faces, all of these odd faces except one intersect at a vertex, and the remaining odd face has non-empty intersection with all other odd faces.
\end{enumerate}
\end{thm}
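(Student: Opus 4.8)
The plan is to characterise the planarity and the $3$-connectivity of $J\wedge K_2$ separately, then intersect the two. I would first record the reductions forced by the product structure: since every edge of $J\wedge K_2$ joins some $(a,x)$ to some $(b,y)$, the cover is automatically simple and bipartite, and it is connected precisely when $J$ is connected and non-bipartite. As a polyhedron is connected, I may assume $J$ is connected and non-bipartite; moreover the fibre $\{(v,x),(v,y)\}$ over any cut vertex $v$ would separate the cover into $(J-v)\wedge K_2$, so $J$ must in fact be $2$-connected. Then the faces of $J$ are cycles, and since their lengths sum to $2|E(J)|$, an even number $2k\geq 2$ of them are odd. This is exactly why the four conditions are organised by the number and mutual position of the odd faces.

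For planarity the decisive computation is a genus count. Fixing a planar embedding of $J$ and lifting its rotation system, each even face of $J$ produces two faces of the cover and each odd face a single face of doubled length, so the lifted embedding has Euler characteristic $4-2k$ (twice that of $J$, reduced by $2k$) and therefore lies on an orientable surface of genus $k-1$. When $k=1$ the lifted embedding is already spherical, giving condition \ref{eq:c1}: here planarity is automatic, and the real content is the disjointness hypothesis, which I would show is precisely what prevents the lifted embedding from possessing a $2$-cut, so that the cover is $3$-connected rather than merely $2$-connected.

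When $k\geq 2$ the lifted embedding has positive genus, so the cover is planar only through a \emph{different} embedding, and deciding when such an embedding exists is the heart of the matter. The guiding example is $K_4$, whose four triangular faces are all odd ($k=2$): its lifted embedding is the toroidal one with four hexagons, yet the cover is the planar cube. I would prove that whenever two odd faces share an edge or a vertex one can locally interchange the two sheets so as to cancel one handle, and then that the intersection patterns of conditions \ref{eq:c2} and \ref{eq:c3}, namely four faces meeting pairwise but never in triples, respectively all faces but one passing through a common vertex (the last meeting each of them), are exactly those admitting $k-1$ such cancellations simultaneously, returning the cover to genus $0$. Once planarity is secured, Whitney's uniqueness of the spherical embedding reduces the $3$-connectivity check to a direct search for $2$-cuts in that embedding.

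Condition \ref{eq:c0}, where $J$ has connectivity $2$, I would handle by the same fibre analysis: a $2$-cut $\{a,b\}$ of $J$ lifts to the four-vertex set $\{(a,x),(a,y),(b,x),(b,y)\}$ whose removal leaves the disconnected graph $(J-a-b)\wedge K_2$, and the stated requirements, that $J-a-b$ split into exactly two pieces each carrying an odd region and that all but two odd regions contain a $2$-cut, are what force every two- or three-element subset to remain non-separating, so that these $4$-cuts are smallest and the cover is $3$-connected. The main obstacle I anticipate is the necessity direction for planarity: the lifted embedding is useless there, so for any odd-face arrangement lying outside \ref{eq:c0}--\ref{eq:c3} I would instead exhibit an explicit $K_{3,3}$ or $K_5$ minor in the two-sheeted cover. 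Translating a forbidden intersection pattern among the odd faces of $J$, for instance three odd faces with empty common intersection and no pairwise structure forcing planarity, into a concrete Kuratowski minor upstairs is the step I expect to demand the most care.
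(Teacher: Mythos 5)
First, a point of reference: this paper does not prove Theorem \ref{thm:0123} at all; it is imported verbatim from \cite{mafkpr} (Theorems 1.3 and 1.4 there), and the structural machinery the source paper uses is visible here in Definition \ref{cond1} and Theorem \ref{thm:ab}: one decomposes $J$ as $J'+a_1b_1+\dots+a_mb_m$ with $J'$ planar and bipartite and all attachment vertices on one region $\calR$, and realises $J\w K_2$ constructively as two copies of $J'$ glued along the edges $(a_i,x)(b_i,y)$, $(a_i,y)(b_i,x)$. Your route is genuinely different --- topological rather than constructive --- and your preliminary reductions are sound: connectedness of the cover forcing $J$ connected and non-bipartite, the fibre of a cut vertex forcing $2$-connectivity, the parity count giving $2k\geq 2$ odd faces, and the Euler-characteristic computation $\chi=4-2k$ for the canonically lifted embedding (even faces doubling, odd faces lifting to one face of twice the length) are all correct.

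However, the two steps that actually carry the theorem are announced rather than proved, and as stated they do not work. The asserted lemma that ``whenever two odd faces share an edge or a vertex one can locally interchange the two sheets so as to cancel one handle'' is not justified, and your own formulation concedes that the cancellations must be performed \emph{simultaneously}; nothing in the sketch shows that the intersection patterns of \ref{eq:c2} and \ref{eq:c3} admit $k-1$ compatible interchanges, nor --- more importantly --- that no other pattern does. The necessity direction is the bulk of the theorem (why \emph{exactly} two or four odd faces, why triple intersections must be empty in \ref{eq:c2}, why a common vertex is forced in \ref{eq:c3}), and you explicitly defer it to exhibiting Kuratowski minors case by case, which is precisely the unhandled combinatorial core. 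The $3$-connectivity analysis has the same character: the dangerous cuts in $J\w K_2$ are not whole fibres but mixed sets such as $\{(a,x),(b,y)\}$ arising when a component of $J-a-b$ is bipartite, so that its double cover splits and hangs on only two of the four lifted vertices; this is exactly why condition \ref{eq:c0} demands an odd region in \emph{each} component (an odd region keeps the lift of that component connected), and why disjointness is needed in \ref{eq:c1}. Your text gestures at this (``force every two- or three-element subset to remain non-separating'') without an argument. In short: correct frame and correct genus bookkeeping, but the sufficiency-via-handle-cancellation claim is unsubstantiated and the necessity direction is missing, so the proposal is a program, not a proof --- and it diverges substantially from the gluing construction along $\calR$ by which \cite{mafkpr} actually establishes the statement.
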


To formulate the second statement, we need the following definition.
\begin{defin}
\label{cond1}
We say that the graph $J$ is the {\bf factor of a Kronecker $\mathbf{3}$-polytope} if it satisfies all of the following conditions. We have
\[J=J'+a_1b_1+a_2b_2+\dots+a_mb_m, \qquad a_1,b_1,a_m,b_m \text{ distinct,}\]
where $J'$ is a planar, bipartite graph, either $3$-connected, or semi-hyper-$2$-connected; there exists a planar immersion of $J'$ such that $a_1, b_1,a_2,b_2,\dots,a_m,b_m\in V(J')$ all lie on one region $\calR$, either in the order
\begin{equation}
	\label{eq:ord1}
a_1,a_2,\dots,a_m,b_m,b_{m-1},\dots,b_1,
\end{equation}
or in the order
\begin{equation}
	\label{eq:ord2}
a_1,a_2,\dots,a_m,b_1,b_2,\dots,b_m;
\end{equation}
for each $1\leq i\leq m$, the graph $J'+a_ib_i$ is not bipartite; any $2$-cut of $J'$ lies on $\calR$, but not between $b_1$ and $a_1$, or $a_m$ and $b_m$ in case of \eqref{eq:ord1}, or between $b_m$ and $a_1$, or $a_m$ and $b_1$ in case of \eqref{eq:ord2} (extrema included).
\end{defin}

Henceforth, whenever a graph $J$ is the factor of a Kronecker $3$-polytope, the notations
\[J',\calR,m,a_1, b_1,a_2,b_2,\dots,a_m,b_m\]
have the meaning of Definition \ref{cond1}. The terminology `factor of a Kronecker $3$-polytope' is motivated by the following result.

\begin{thm}[{\cite[Theorem 1.7]{mafkpr}}]
\label{thm:ab}
The Kronecker product $\calP=J\wedge K_2$ is a $3$-polytope if and only if $J$ satisfies Definition \ref{cond1}. To construct $\calP$, one starts with two copies of $J'$, with vertex labellings such that if $(v,x)$ belongs to one copy, then $(v,y)$ belongs to the other, and then one adds the edges
\[(a_i,x)(b_i,y), \quad (a_i,y)(b_i,x), \qquad 1\leq i\leq m.\]
\end{thm}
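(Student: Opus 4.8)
The plan is to first pin down the combinatorial structure of the Kronecker cover, then treat the two implications, using the unique spherical embedding of a $3$-polytope throughout.

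First I would record the elementary but crucial fact that, for a connected bipartite graph $J'$ with colour classes $P,Q$, the cover $J'\wedge K_2$ splits into two disjoint copies of $J'$: one copy is induced on $\{(v,x):v\in P\}\cup\{(v,y):v\in Q\}$ and the other on the complementary vertex set. Adding an edge $a_ib_i$ to $J'$ makes $J'+a_ib_i$ non-bipartite exactly when $a_i,b_i$ share a colour class, and in that case the two lifted edges $(a_i,x)(b_i,y)$ and $(a_i,y)(b_i,x)$ each join the two copies. Hence, writing $J=J'+a_1b_1+\dots+a_mb_m$ as in Definition~\ref{cond1}, the cover $\calP=J\wedge K_2$ is precisely two copies of $J'$ joined by the $2m$ ``rung'' edges of the theorem; this simultaneously establishes the stated construction and shows $\calP$ is connected. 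Conversely, any connected $\calP=J\wedge K_2$ arises this way once one removes from $J$ a set of edges whose deletion leaves a bipartite graph $J'$, so the content of the theorem is that the geometric hypotheses of Definition~\ref{cond1} are exactly what make this two-copies-plus-rungs graph a polyhedron.

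For the ``if'' direction I would exhibit an explicit planar embedding. Draw $J'$ in a closed disk with the face $\calR$ as the outer face, so that $a_1,\dots,a_m,b_1,\dots,b_m$ lie on the bounding circle in the order \eqref{eq:ord1} or \eqref{eq:ord2}; place the second (mirror-image) copy of $J'$ in the complementary disk of the sphere, and route the $2m$ rungs through the resulting annulus. The two admissible cyclic orders are exactly the two rotational matchings of inner-boundary to outer-boundary vertices for which the rungs can be drawn pairwise non-crossing, which gives planarity. I would then verify $3$-connectivity by showing $\calP$ has no $2$-cut: a separating pair would have to respect the deck involution and project to a small cut of $J$, and the hypotheses that $J'$ is $3$-connected or semi-hyper-$2$-connected, that each $J'+a_ib_i$ is non-bipartite (so the rungs genuinely cross-link the two copies), that $a_1,b_1,a_m,b_m$ are distinct, and that every $2$-cut of $J'$ sits on $\calR$ away from the forbidden end-arcs, together rule this out. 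This makes $\calP$ planar and $3$-connected, hence a $3$-polytope.

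For the ``only if'' direction I would run the construction in reverse. Since $\calP$ is a polyhedron it has a unique embedding in the sphere (Whitney), and the fixed-point-free deck involution $\sigma:(v,x)\leftrightarrow(v,y)$ is then realised by a homeomorphism of the sphere; analysing its action isolates an invariant ``equatorial'' structure that identifies the face $\calR$, the bipartite core $J'$ obtained by folding the two $\sigma$-exchanged copies, and the rungs. Planarity of $\calP$ forces the boundary vertices into one of the orders \eqref{eq:ord1}, \eqref{eq:ord2} and forces every $2$-cut of $J'$ onto $\calR$ in the permitted position, while $3$-connectivity of $\calP$ descends to $3$-connectivity or semi-hyper-$2$-connectivity of $J'$; non-bipartiteness of each $J'+a_ib_i$ records that each rung pair links the copies. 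As a cross-check, and as an alternative route that avoids the topological folding argument, one can instead prove that Definition~\ref{cond1} is equivalent to the face conditions \ref{eq:c0}--\ref{eq:c3} and invoke Theorem~\ref{thm:0123}: since $J'$ is bipartite all its faces are even, and the chords $a_ib_i$ inserted into $\calR$ are exactly the edges that create odd faces, so the orderings \eqref{eq:ord1} and \eqref{eq:ord2} translate directly into the prescribed odd-face configurations of \ref{eq:c0}--\ref{eq:c3}. I expect the main obstacle to be the $3$-connectivity bookkeeping --- ruling out every potential $2$-cut of $\calP$ and matching it precisely with the positional (``no cut on the forbidden end-arcs'') hypotheses on $J'$ --- since this is where the several clauses of Definition~\ref{cond1} must be shown to be simultaneously necessary and sufficient.
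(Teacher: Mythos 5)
A preliminary remark: the paper you were given does not prove this statement at all --- Theorem~\ref{thm:ab} is imported verbatim from \cite[Theorem 1.7]{mafkpr} and is used as a black box throughout (e.g.\ in the proofs of Theorems~\ref{thm:quad} and~\ref{thm:cub}), so there is no internal proof to compare against, and your proposal must be judged on its own merits. Its structural skeleton is correct and matches the construction the theorem asserts: the double cover of the bipartite $J'$ splits into two disjoint copies of $J'$; each chord $a_ib_i$ has monochromatic ends precisely because $J'+a_ib_i$ is non-bipartite, so its two lifts are cross edges; hence $\calP$ is two copies of $J'$ joined by $2m$ rungs, which one tries to embed in complementary disks of the sphere with the lifts of $\calR$ bounding an annulus. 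This is the right picture.

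However, two steps as written would fail. First, in the $3$-connectivity argument you claim a separating pair of $\calP$ ``would have to respect the deck involution and project to a small cut of $J$.'' Neither half of this is justified: a $2$-cut $\{u,v\}$ of $\calP$ need not be $\sigma$-invariant (take $u$ in one copy of $J'$ and $v$ in the other, or $u,v$ projecting to the same vertex of $J$), and the projection of a cut of the cover need not separate $J$. The entire content of the several clauses of Definition~\ref{cond1} --- in particular the positional condition that no $2$-cut of $J'$ lies on the arcs between $b_1,a_1$ and $a_m,b_m$ (resp.\ $b_m,a_1$ and $a_m,b_1$) --- lives exactly in this case analysis, which your sketch asserts rather than performs; note that the paper itself, in the proof of Theorem~\ref{thm:cub}, derives a contradiction from a $2$-cut ``lying on $\calR$ between $b_m$ and $a_1$,'' showing how delicately these clauses are used. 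Second, your proposed alternative route --- proving Definition~\ref{cond1} equivalent to Conditions \ref{eq:c0}--\ref{eq:c3} and invoking Theorem~\ref{thm:0123} --- cannot work as stated, because Theorem~\ref{thm:0123} assumes $J$ planar, whereas Theorem~\ref{thm:ab} must cover non-planar $J$: the paper stresses that the graph obtained from the cube by adding the diagonals of a face is non-planar yet has $J\w K_2\simeq C_4\q P_4$ (see also Corollary~\ref{cor:tri} and \eqref{eq:opp}), and Conditions \ref{eq:c0}--\ref{eq:c3} speak of faces of $J$, which are meaningless without a planar embedding. So the ``cross-check'' is not a fallback proof, and the main route still owes the reader the $2$-cut bookkeeping and a proof that \eqref{eq:ord1} and \eqref{eq:ord2} are the only vertex orders on $\calR$ compatible with a planar routing of the rungs.
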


Perhaps surprisingly, $J$ need not be planar, e.g.~if $J$ is the graph obtained from the cube by adding the diagonals of a face, then $J$ is non-planar, while $J\w K_2\simeq C_4\q P_4$ is a $3$-polytope \cite[Introduction]{mafkpr}.

It was also shown in \cite{mafkpr} that if $\calP$ is a $3$-polytopal Cartesian product, then either $\calP\simeq C_n\q P_m$ for some integers $n\geq 3$ and $m\geq 2$, i.e.~$\calP$ is a stacked prism, or $\calP\simeq H\q K_2$, where $H$ is outerplanar and Hamiltonian, i.e.~$H$ is a polygon with some added diagonals \cite[Proposition 1.9]{mafkpr} -- see also \cite{behmah}. We note that an outerplanar graph is Hamiltonian if and only if it is $2$-connected.

Several natural questions arise from \cite{mafkpr}.
\begin{align}
&\text{
Which $3$-polytopal Kronecker products are face-regular?
}\label{q1}\\
&\text{
Which $3$-polytopal Kronecker products are vertex-regular?
}\label{q2}\\
&\text{
Which $3$-polytopes are Kronecker and/or Cartesian graph products in more than one way?
}\label{q3}
\end{align}

For instance, the cube is a solution to all three questions, as it is $4$-face regular, $3$-vertex regular, the Cartesian product of the square and $K_2$, and the Kronecker product of the tetrahedron and $K_2$. It is also the smallest $3$-polytopal Kronecker product.

In \cite[Proof of Proposition 1.2]{mafkpr}, it was also proven that, if $\calP=J\w K_2$ is a polyhedral graph, then the connectivity of $\calP$ is $3$. The argument is summarised as follows. For $\calP$ a $(p,q)$ polyhedral graph with $r$ faces, since $\calP$ is bipartite it has no $3$-cycles, thus $2q\geq 4r$, with equality if and only if $\calP$ is a quadrangulation. Applying Euler's formula for planar graphs $p-q+r=2$, we obtain $2q\leq 4p-8$, so that by $3$-connectivity $\calP$ has at least eight vertices of degree $3$. The following natural question arises.
\begin{equation}
\label{q4}
\text{Which $3$-polytopal Kronecker products have the least number of degree $3$ vertices?} 
\end{equation}

Note that any solution to \eqref{q4} is also a solution to \eqref{q1}, as it is a quadrangulation. An equivalent formulation of \eqref{q4} is, which $3$-polytopal Kronecker products have degree sequence
\begin{equation}
\label{eq:ds}
4^{p-8},3^8
\end{equation}
(where exponents denote quantities of vertex degrees)?

The focus of our work is to answer the above (and related) questions in extremal and combinatorial graph theory.

\subsection{Main results on regular graphs}
\label{sec:res}
Let us consider \eqref{q1}. If $\calP$ is a $t$-face-regular polyhedron, then $3\leq t\leq 5$. If the $t$-face-regular polyhedron $\calP$ is also a Kronecker product of graphs, then it is bipartite, hence $t=4$ i.e.~$\calP$ is a quadrangulation of the sphere. For this reason, \eqref{q1} may be thought of as a question in extremal graph theory, as the solutions are also the polyhedral Kronecker products with largest edge/vertex ratio (by the handshaking lemma). For other extremal problems on polyhedral graphs, see \cite{mafpo2,mafpo3,mafwil,gasmaf}.

To answer \eqref{q1}, we have the following two theorems, to be proven in Section \ref{sec:quad}.

\begin{thm}
\label{thm:quad}
The product $\calP=J\wedge K_2$ is a $3$-connected quadrangulation of the sphere if and only if the following are all satisfied. The conditions of Definition \ref{cond1} hold for $J$; in $J'$, each region is quadrangular, except possibly the region $\calR$ containing the vertices $a_1,a_2,\dots,a_m,b_1,b_2,\dots,b_m$; we may label the vertices of $\calR$ by
\[\calR=[v_1,v_2,\dots,v_{2\ell}], \quad \ell\geq 2,\]
where
\begin{align}
\label{eq:risi}
\notag
a_i=v_{r_i},& \quad 1\leq i\leq m,\\
b_i=v_{s_i},& \quad 1\leq i\leq m,
\end{align}
$r_1=1$, $r_m$ is even, $s_1=r_m+1$, $s_m=2\ell$, and for every $1\leq i\leq m-1$ we have
\begin{equation}
\label{eq:rsrel}
(r_{i+1}-r_i)+(s_{i+1}-s_i)=2.
\end{equation}
\end{thm}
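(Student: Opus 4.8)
The plan is to read the faces of $\calP=J\w K_2$ directly off the construction in Theorem \ref{thm:ab} and to impose that each has length $4$. By Theorem \ref{thm:ab}, $\calP$ is a $3$-polytope exactly when $J$ satisfies Definition \ref{cond1}, and in that case $\calP$ is automatically bipartite and (as recalled in Section \ref{sec:prior}) of connectivity precisely $3$; so the only substantive extra requirement is that every region be a $4$-gon. Since $J'$ is bipartite, $J'\w K_2$ splits into two disjoint copies of $J'$, and the construction of Theorem \ref{thm:ab} re-glues them along the $2m$ edges $(a_i,x)(b_i,y)$ and $(a_i,y)(b_i,x)$. I would fix the planar embedding in which the two copies are drawn with their $\calR$-regions facing one another, so that all these crossing edges lie in the annulus bounded by the two lifts $\calR_1,\calR_2$ of the cycle $\calR=[v_1,\dots,v_{2\ell}]$.

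First I would classify the faces of $\calP$. Each bounded face of either copy of $J'$ other than its $\calR$-region survives unchanged as a face of $\calP$, while the two $\calR$-regions merge into the annulus between $\calR_1$ and $\calR_2$, which the crossing edges cut into new faces. Hence $\calP$ is a quadrangulation if and only if (a) every region of $J'$ except $\calR$ is quadrangular --- this is exactly the second condition of the statement --- and (b) every annular face is a $4$-gon. Part (a) is immediate once the face correspondence is set up; the content is entirely in (b).

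To analyse (b), I would first do the bipartite bookkeeping: writing $U,W$ for the two parts of $J'$, the vertex $(v_j,\cdot)$ lands in $\calR_1$ or $\calR_2$ according to the parity of $j$ and the chosen copy, and since each $J'+a_ib_i$ is non-bipartite the endpoints $a_i=v_{r_i}$, $b_i=v_{s_i}$ satisfy $r_i\equiv s_i\pmod 2$. A short case check then shows that, whatever these parities are, the two crossing edges coming from $a_ib_i$ do join $\calR_1$ to $\calR_2$ and, labelling the position-$j$ vertices of $\calR_1,\calR_2$ by $P_j,Q_j$, are exactly $P_{r_i}Q_{s_i}$ and $P_{s_i}Q_{r_i}$. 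Invoking the order \eqref{eq:ord2} of Definition \ref{cond1}, these edges appear around the annulus in the cyclic order $P_{r_1}Q_{s_1},\dots,P_{r_m}Q_{s_m},P_{s_1}Q_{r_1},\dots,P_{s_m}Q_{r_m}$, and each annular face is bounded by two consecutive crossing edges together with one arc of $\calR_1$ and one arc of $\calR_2$.

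The key computation is that the face between the $i$-th and $(i{+}1)$-st of these edges has length $(r_{i+1}-r_i)+(s_{i+1}-s_i)+2$; setting this equal to $4$ yields precisely \eqref{eq:rsrel}, while the two transition faces between the $a$-block and the $b$-block force $s_1=r_m+1$ and $s_m=2\ell$, and the normalisation $r_1=1$ is free. Finally, $r_m$ even is exactly the parity constraint $r_i\equiv s_i$ coming from non-bipartiteness, and summing \eqref{eq:rsrel} over $i$ against these boundary values returns $\ell=m$, confirming internal consistency. The converse direction runs the same computation backwards: conditions (a) and \eqref{eq:rsrel} make every face a $4$-gon, while Definition \ref{cond1} makes $\calP$ a $3$-polytope via Theorem \ref{thm:ab}. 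I expect the main obstacle to be step (b) --- rigorously pinning down the annular embedding: verifying through Definition \ref{cond1} that the crossing edges are pairwise non-crossing, determining their exact cyclic order, treating the two transition faces correctly, and reducing the order \eqref{eq:ord1} to the normal form \eqref{eq:ord2} --- whereas the parity tracking and face-counting are routine.
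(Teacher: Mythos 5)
Your proposal is correct and follows essentially the same route as the paper's proof: the faces of $\calP$ are read off the construction of Theorem \ref{thm:ab}, all regions of $J'$ other than $\calR$ must be quadrangular, and imposing length $4$ on the faces lying between the two copies of $\calR$ yields \eqref{eq:rsrel} together with $s_1=r_m+1$ and $s_m=2\ell$, with $r_m$ even coming from the parity forced by non-bipartiteness of $J'+a_1b_1$, exactly as in the paper (your annular face-length formula $(r_{i+1}-r_i)+(s_{i+1}-s_i)+2$ is just a uniform packaging of the paper's face-by-face checks, and your observation $\ell=m$ is a correct consistency remark the paper leaves implicit). The one point to phrase differently: the order \eqref{eq:ord1} is not ``reduced'' to \eqref{eq:ord2} but excluded outright --- the paper notes that under \eqref{eq:ord1} the vertices $a_1x,b_1x,a_1y,b_1y$ would all lie on a single face, which cannot be a quadrangle since product edges only join $x$-labelled to $y$-labelled vertices; your own machinery yields the same exclusion, since that face would have length $2(2\ell-s_1+r_1)+2$, which the parity constraint $r_1\equiv s_1\pmod 2$ forces to be at least $6$.
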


Theorem \ref{thm:quad} is illustrated in Figure \ref{fig:q}.
\begin{figure}[h!]
\centering
\includegraphics[width=5.25cm]{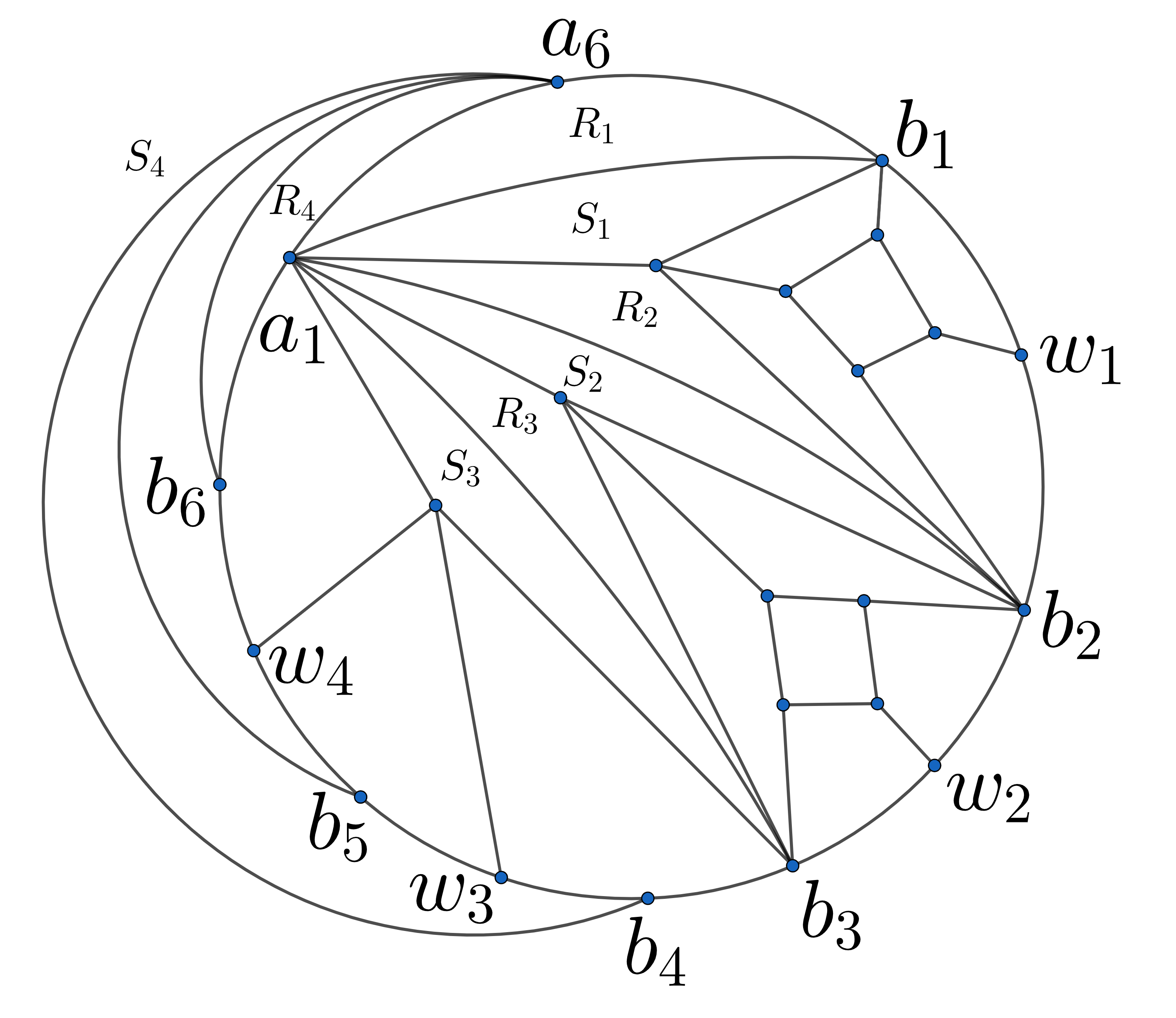}
\caption{Illustration of $J$ for Theorem \ref{thm:quad}. Here $m=6$, $2\ell=12$, $r_1=r_2=r_3=1$, $r_4=r_5=r_6=2$, $s_1=3$, $s_2=5$, $s_3=7$, $s_4=8$, $s_5=10$, and $s_6=12$.}
\label{fig:q}
\end{figure}

\begin{thm}
\label{thm:quadpl}
Let $J$ be planar, and $\calP=J\wedge K_2$ a $3$-connected quadrangulation of the sphere. Then $J$ is a polyhedron, and the odd faces of $J$ are $2\ell+1$ triangles with a common vertex and a $2\ell+1$-gon that has non-empty intersection with each of the triangles, for some $1\leq\ell\leq m-1$. In particular, $J$ satisfies Condition \ref{eq:c3} of Theorem \ref{thm:0123}. Moreover, there are infinitely many such graphs $J$.
\end{thm}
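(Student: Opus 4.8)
The plan is to combine the quantitative constraints forced by the quadrangulation hypothesis with the structural decomposition of Theorem \ref{thm:quad}. First I would record the Euler bookkeeping. Writing $p_J,q_J$ for the numbers of vertices and edges of $J$, the product $\calP$ has $2p_J$ vertices and $2q_J$ edges; since a spherical quadrangulation satisfies $q_\calP=2p_\calP-4$, this gives $q_J=2p_J-2$, and Euler's formula gives $J$ exactly $p_J$ faces. Summing face lengths then yields $\sum_i(\ell_i-4)=-4$, where the $\ell_i$ are the face lengths of $J$ in a fixed planar embedding. Thus triangles are the only faces contributing negatively, every face of length $\geq 5$ contributes positively, and the identity already forces at least four triangular faces. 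This is the budget I must spend exactly on "$(2\ell+1)$ triangles and one $(2\ell+1)$-gon, all other faces quadrilateral", and it immediately kills Condition \ref{eq:c1}, since two odd faces cannot absorb the deficit $-4$.

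Next I would pin down which faces are odd. By Theorem \ref{thm:quad} we may write $J=J'+a_1b_1+\dots+a_mb_m$ with $J'$ bipartite; two-colouring $J'$, each added edge $a_ib_i$ joins two vertices of the same colour (this is precisely the requirement in Definition \ref{cond1} that $J'+a_ib_i$ be non-bipartite, and it is compatible with the parity relation \eqref{eq:rsrel}). Consequently a cycle of $J$ is odd if and only if it uses an odd number of the added edges, so a face of $J$ is odd precisely when its boundary contains an odd number of the $a_ib_i$; in particular each triangular face uses an odd number of added edges. The core of the argument is then to analyse the planar embedding of $J$. The added edges cross when one tries to draw them all inside $\calR$ -- indeed the constraint that $r_m$ be even in Theorem \ref{thm:quad}, together with \eqref{eq:rsrel}, forces at least one such crossing, which is exactly why the generic $J$ of Figure \ref{fig:q} is non-planar. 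Planarity of $J$ means these crossings must instead be resolved by routing the offending edges through the complementary faces of $J'$, and I would show that the only distribution compatible with $q_J=2p_J-2$ is for the added edges to fan out, cutting off triangles, around a single apex vertex $w$, leaving one odd face over.

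Carrying this out gives the stated structure: $2\ell+1$ triangular faces all incident to the common apex $w$, and one residual face which, being the unique remaining odd face, has odd length $2\ell+1$ and meets each triangle; the identity $\sum_i(\ell_i-4)=-4$ then forces every other face to be a quadrilateral, and eliminating Condition \ref{eq:c0} (the connectivity-$2$ case, whose odd regions are split by a $2$-cut into two components and so cannot all but one pass through a single vertex) shows $J$ is $3$-connected, hence a polyhedron. The bound $1\leq\ell\leq m-1$ comes from comparing the number $2\ell+1$ of triangles with the number $m$ of added edges. This description is verbatim Condition \ref{eq:c3} of Theorem \ref{thm:0123}, with the $(2\ell+1)$-gon playing the role of the one odd face not through the common vertex, so that conclusion is immediate. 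I expect the embedding analysis -- proving that planarity forces the "fan of triangles plus one odd gon" picture rather than some other distribution of the added edges among the faces of $J'$ -- to be the main obstacle, since it is here that one rules out Condition \ref{eq:c2}: its four pairwise-meeting triangles with no common triple vertex over-determine $J$ and cannot leave all other faces quadrilateral.

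Finally, for the infinitude statement I would exhibit an explicit family. Take $J$ to be the wheel obtained from the cycle $C_{2\ell+1}$ by adjoining a hub vertex adjacent to every rim vertex. This $J$ is planar and $3$-connected, and its odd faces are precisely the $2\ell+1$ rim triangles (all through the hub) together with the outer $(2\ell+1)$-gon, so it satisfies Condition \ref{eq:c3} and hence $\calP=J\wedge K_2$ is a polyhedron by Theorem \ref{thm:0123}. A direct count gives $p_J=2\ell+2$ and $q_J=4\ell+2=2p_J-2$, whence $\calP$ is a bipartite polyhedron with $q_\calP=2p_\calP-4$, i.e.\ a quadrangulation. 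Letting $\ell$ range over all positive integers (the case $\ell=1$ recovering $J\simeq K_4$ and the cube) produces infinitely many pairwise non-isomorphic such $J$, completing the proof.
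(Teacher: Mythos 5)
Your proposal is correct in its bookkeeping (the deficit identity $\sum_i(\ell_i-4)=-4$, hence at least four triangles, which does rule out Condition \ref{eq:c1}) and in its parity observation (faces of $J$ are odd exactly when they use an odd number of the added edges $a_ib_i$), and your wheel family for the infinitude claim is correct and in fact more explicit than the paper's (which just points to Figure \ref{fig:qpl2} for varying $m,i$): the wheel on $C_{2\ell+1}$ satisfies Condition \ref{eq:c3}, so $\calP$ is a polyhedron by Theorem \ref{thm:0123}, and the edge count $q_J=2p_J-2$ forces the bipartite product to be a quadrangulation. But there is a genuine gap at the crux, and you flag it yourself: the claim that ``the only distribution compatible with $q_J=2p_J-2$ is for the added edges to fan out \dots around a single apex'' is asserted, not proven. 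Counting cannot deliver this: the deficit $-4$ is equally consistent with four pairwise-meeting triangles having no common vertex (i.e.\ Condition \ref{eq:c2}), with five triangles and a pentagon in some non-fan arrangement, with two separate fans, etc. Your one-sentence dismissal of Condition \ref{eq:c2} (``over-determine $J$'') and your argument that eliminating Condition \ref{eq:c0} yields $3$-connectivity are likewise gestures rather than proofs.

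The paper fills exactly this hole with a short but essential planarity argument that your sketch is missing. By Theorem \ref{thm:quad} the endpoints occur around $\calR$ in the order $a_1,\dots,a_m,b_1,\dots,b_m$; hence if there were indices with $a_1,a_j,a_m$ all distinct and $b_1,b_j,b_m$ all distinct, the chords would produce a $K_{3,3}$ subdivision, contradicting planarity of $J$. This forces (w.l.o.g.) $a_1=\dots=a_i\neq a_{i+1}=\dots=a_m$, and then the quantitative relations of Theorem \ref{thm:quad} ($r_1=1$, $r_m$ even, $s_1=r_m+1$, $s_m=2\ell$, and \eqref{eq:rsrel}) pin down $\calR$ exactly as in \eqref{eq:cR}. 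Only now, drawing $a_1b_1,\dots,a_1b_i$ inside $\calR$ and $a_mb_{i+1},\dots,a_mb_m$ outside, does one read off the $2i+1$ triangles through the common vertex $a_1$ and the residual $(2i+1)$-gon $S_{i+1}$ -- which is also where the bound $\ell\in\{i,m-i\}\subseteq\{1,\dots,m-1\}$ and the elimination of Conditions \ref{eq:c0} and \ref{eq:c2} come from, and where $3$-connectivity is obtained (via Example \ref{ex:m2} in the case $m=2$, which your fan picture does not cover and the paper treats separately). To repair your proof you would need to supply this $K_{3,3}$ obstruction step, or an equivalent analysis of how the chords $a_ib_i$ can be distributed inside and outside $\calR$ without crossings; as written, the central structural claim of the theorem is exactly the part left unproved.
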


For example, in Figure \ref{fig:q} the vertex $a_1$ lies on seven triangular faces, and each of these faces has non-empty intersection with the external heptagon.

We remark that Theorem \ref{thm:quadpl} describes the types of faces in $J$ and their intersections, analogously to Theorem \ref{thm:0123}. On the other hand, Theorem \ref{thm:quad} discusses the structure of $J$ in the general case, much like Theorem \ref{thm:ab}.

The second author proved Theorems \ref{thm:0123} and \ref{thm:ab} independently from one another in \cite{mafkpr}. As opposed to this, interestingly, to prove Theorem \ref{thm:quadpl}, we will apply the general structure of $J$ uncovered in Theorem \ref{thm:quad}.

We now focus on \eqref{q4}, a special case of \eqref{q1}. The cube is of course a solution. In \cite{mafkpr}, a couple other solutions were found, namely the stacked cube $C_4\q P_4$ (recall that it is a Kronecker product), and the graph of Figure \ref{fig:ds2} (from \cite[Figure 2]{mafkpr}). More in general, we note that all stacked cubes have sequence \eqref{eq:ds}, and moreover, as we shall see in Theorem \ref{thm:dou}, the stacked cubes $C_4\q P_{2m}$, $m\geq 1$ are Kronecker products. More specifically, we record that the $2m$-stacked cube may be obtained via the Kronecker product of a non-planar graph with $K_2$ (see Corollary \ref{cor:tri} to follow), whereas the graph in Figure \ref{fig:ds2} is the Kronecker cover of a planar graph (Figure \ref{fig:ds1}).
\begin{figure}[ht]
\centering
\begin{subfigure}{0.45\textwidth}
\centering
\includegraphics[width=3.25cm]{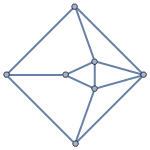}
\caption{A polyhedron $J$ of sequence $4443333$.}
\label{fig:ds1}
\end{subfigure}
    \hfill
\begin{subfigure}{0.53\textwidth}
\centering
\includegraphics[width=3.5cm]{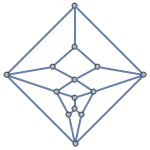}
\caption{The graph $J\w K_2$ is a polyhedron of sequence \eqref{eq:ds}.}
\label{fig:ds2}
\end{subfigure}
\caption{A solution to \eqref{q4}.}
\label{fig:ds}
\end{figure}

We have the following characterisation and construction for planar graphs $J$ such that $J\w K_2$ is an order $p$ polyhedral Kronecker product with degree sequence \eqref{eq:ds}.

\begin{thm}
\label{thm:3333}
Let $J\not\simeq K_4$ be a planar graph of order $p_J$, such that $J\w K_2$ is a $3$-polytope of order $p=2p_J$ with degree sequence \eqref{eq:ds}. Then $J$ is obtained from the graph in Figure \ref{fig:start} by iterating the transformations in Figure \ref{fig:38t} (at each step, one may take either \ref{fig:38t1}, or \ref{fig:38t2}, or the mirror image of \ref{fig:38t2}), followed by a final transformation either as in Figure \ref{fig:38f1}, or as in Figure \ref{fig:38f2}.
\\
Moreover, we can say the following about the faces and vertices of $J$. Four faces are triangular and the rest quadrangular. One triangular face is adjacent to all of the other three, and any two of the other three intersect in a vertex. In particular, $J$ satisfies Condition \ref{eq:c3} of Theorem \ref{thm:0123}.
\\
As for vertices, such a $J$ exists if and only if $p_J\equiv 1 \pmod 3$. Moreover, one of the vertices of degree $3$ is adjacent to all of the other three.
\end{thm}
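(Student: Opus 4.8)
The plan is to pull the hypothesis back from $\calP$ to $J$, invoke the structure theorems already available, pin down the combinatorics with Euler's formula, and then establish the construction in both directions by induction on $p_J$.

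First I would translate the degree data. In $J\w K_2$ every vertex $(a,x)$ and $(a,y)$ has degree $\deg_J(a)$, so the sequence \eqref{eq:ds} of $\calP$ forces $J$ to have degree sequence $3^4,4^{p_J-4}$: exactly four vertices of degree $3$, all others of degree $4$. Since $\calP$ attains the minimum of eight degree-$3$ vertices it is a quadrangulation, so Theorems \ref{thm:quad} and \ref{thm:quadpl} apply and $J$ is a polyhedron whose odd faces are $2\ell+1$ triangles through a common vertex $c$ together with a $(2\ell+1)$-gon meeting each of them. Because $c$ is incident to $\deg_J(c)\le 4$ faces and to at least $2\ell+1$ of them, we get $2\ell+1\le 4$, hence $\ell=1$: $J$ has exactly four triangular faces and all remaining faces are quadrilaterals.

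Next I would extract the face and vertex structure. The Euler-type identity $\sum_{f}(4-|f|)+\sum_{v}(4-\deg v)=8$ reads $4+4=8$ here, confirming the balance between the four triangles and the four degree-$3$ vertices, and together with the handshake counts it gives $r_J=p_J$ and $q_J=2p_J-2$. Reading off the cyclic order of the $a_i,b_i$ on $\calR$ from Theorem \ref{thm:quad} specialised to $\ell=1$, I would verify both claims: one triangle shares an edge with each of the other three while those other three pairwise meet in a single vertex, and (regrouping) three of the four triangles share a common vertex while the fourth meets each of them, which is precisely Condition \ref{eq:c3} of Theorem \ref{thm:0123}. Locating the degree-$3$ vertices in this configuration — the common-vertex triangles are centred at degree-$4$ vertices, and the four degree-$3$ vertices occupy the remaining corners, as in the seed of Figure \ref{fig:ds1} — then exhibits the distinguished degree-$3$ vertex adjacent to the other three.

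It remains to prove the construction. For soundness I would check, case by case over \ref{fig:38t1}, \ref{fig:38t2} and its mirror, and the closures \ref{fig:38f1}, \ref{fig:38f2}, that each transformation adds three vertices, preserves planarity and $3$-connectivity, keeps the degree sequence $3^4,4^{p_J-4}$ and the four-triangle adjacency pattern, and hence keeps $J\w K_2$ a $3$-polytopal quadrangulation with sequence \eqref{eq:ds}. Since the base configuration of Figure \ref{fig:start} realises $p_J=7$ and each iterate changes $p_J$ by $3$, every graph produced satisfies $p_J\equiv 1\pmod 3$, and conversely every such value is attained by choosing the number of iterates; with the completeness step this yields the stated equivalence. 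For completeness I would induct on $p_J$: given a valid $J\not\simeq K_4$ with $p_J>7$, locate a reducible configuration abutting the triangular region that is the image of one transformation, apply its inverse, and verify the smaller graph is again valid, terminating at the seed.

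The main obstacle is exactly this last step: showing that a reducible configuration must exist in every valid $J$ with $p_J>7$, and that undoing it preserves all of planarity, $3$-connectivity, the exact degree sequence, and the four-triangle adjacency structure — so that the reduced graph is again a polytopal quadrangulation and is not prematurely collapsed to $K_4$. This is the familiar hard core of generation theorems for planar families, and I expect it to require a careful discharging-style or direct case analysis around the four triangular faces, with the closures \ref{fig:38f1} and \ref{fig:38f2} accounting for the boundary cases where the triangular region meets the rest of $J$.
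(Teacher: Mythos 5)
Your reductions at the start are sound and run parallel to the paper's: pulling \eqref{eq:ds} back to the degree sequence $4^{p_J-4},3^4$ for $J$, observing that equality in $2q\le 4p-8$ makes $\calP$ a quadrangulation, and invoking Theorems \ref{thm:quad} and \ref{thm:quadpl}. Your shortcut to ``exactly four triangles'' via the common vertex of the $2\ell+1$ triangles having degree at most $4$ is correct and slightly slicker than the paper, which instead bounds $m\le i+2\le 4$ from the adjacencies of $a_1$ and $a_m$ along $\calR$ in \eqref{eq:cR} and then kills the cases $m=2$ (forces $J\simeq K_4$ or an impossible extra vertex) and $m=4,i=2$ (forces $\deg_J(a_4)\ge 5$) by short contradictions, leaving $m=3$, $i=1$ up to relabelling. (Beware that the $\ell$ of Theorem \ref{thm:quad} and the $\ell$ of Theorem \ref{thm:quadpl} are different parameters; you conflate them when ``specialising to $\ell=1$''.)

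The genuine gap is the one you yourself flag: completeness of the generation procedure, which is the main assertion of the theorem, is left as an unexecuted plan. You propose an induction on $p_J$ via a ``reducible configuration'' whose existence you admit you cannot yet establish, anticipating a discharging-style analysis. The paper never needs reducibility, because the constraints are rigid enough to force the construction \emph{forwards}: starting from $\calR=[a_1,a_3,b_1,b_2,w_1,b_3]$, the degree sequence and the requirement that every region of $J'$ except $\calR$ be quadrangular force a vertex $c_1$ adjacent to $a_1,b_1$, producing a hexagonal region $[c_1,\dots,c_6]$ with partial degrees $2,4,3,2,3,4$ containing all remaining vertices; an exhaustive local analysis then shows the only continuations are the edge $c_1c_4$ (closure \ref{fig:38f1}), new vertices $v',v''$ with a single common neighbour $z_1=z_2=z_3$ (closure \ref{fig:38f2}), or $v',v''$ with $z_1,z_2,z_3$ distinct, realising exactly one of $\calT(1),\calT(2),\calT(3)$ --- each of which reproduces a hexagon with the same degree pattern, so the analysis iterates layer by layer until closure. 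This forcing argument is what your proposal is missing, and without it (or a completed reducibility proof) the characterisation direction of the theorem is unproven. A small additional slip: for $p_J\equiv 1\pmod 3$ you need that the closures contribute $0$ and $3$ vertices respectively (Figure \ref{fig:38f1} adds only an edge to the order-$7$ seed, Figure \ref{fig:38f2} adds three vertices), not that ``each iterate changes $p_J$ by $3$'' uniformly.
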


\begin{figure}[ht]
\centering
\includegraphics[width=4.0cm]{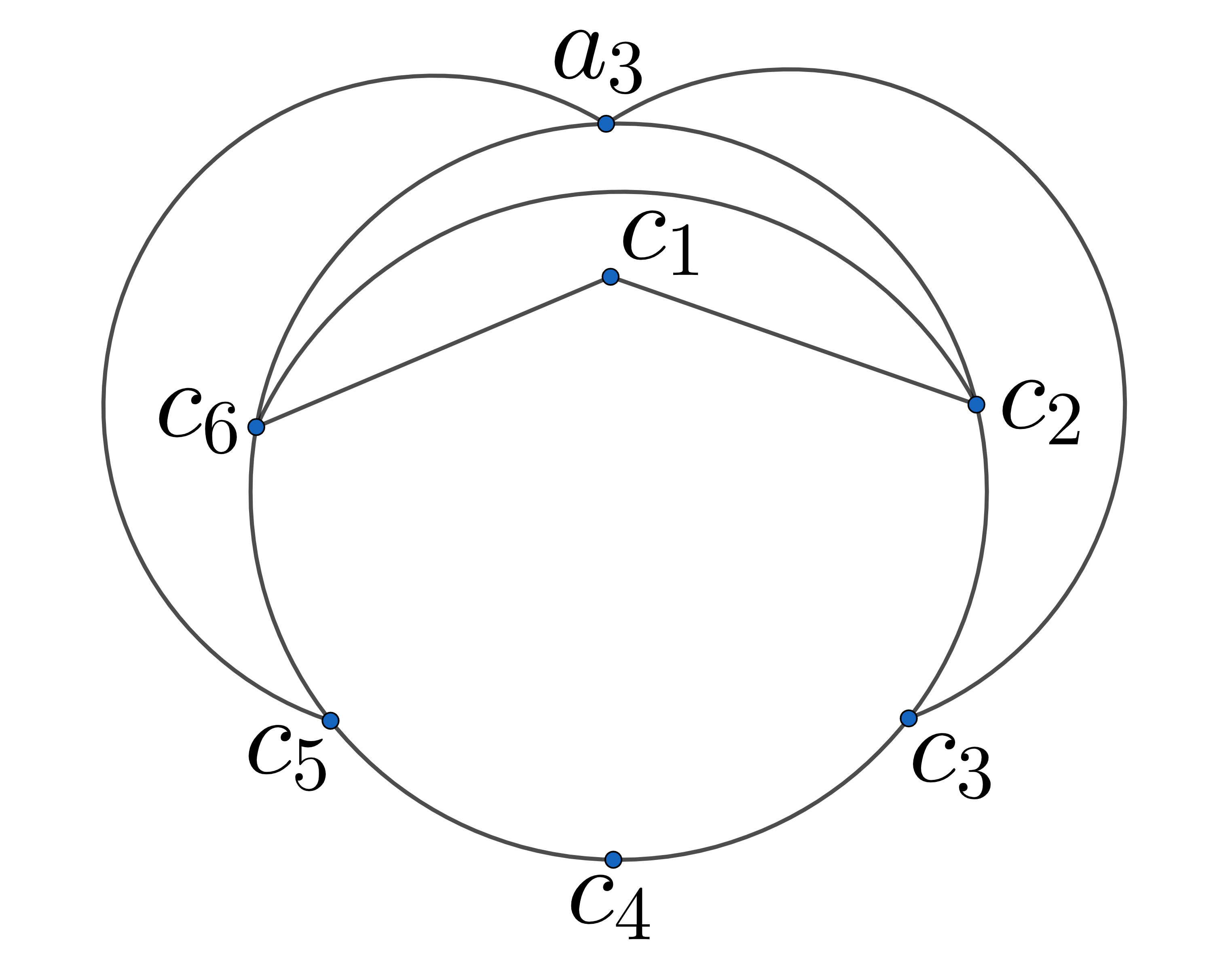}
\caption{Theorem \ref{thm:3333}, starting graph. The first transformation is applied to the inner hexagon $c_1,c_2,c_3,c_4,c_5,c_6$.}
\label{fig:start}
\end{figure}

\begin{figure}[h!]
\centering
\begin{subfigure}{0.49\textwidth}
\centering
\includegraphics[width=2.75cm]{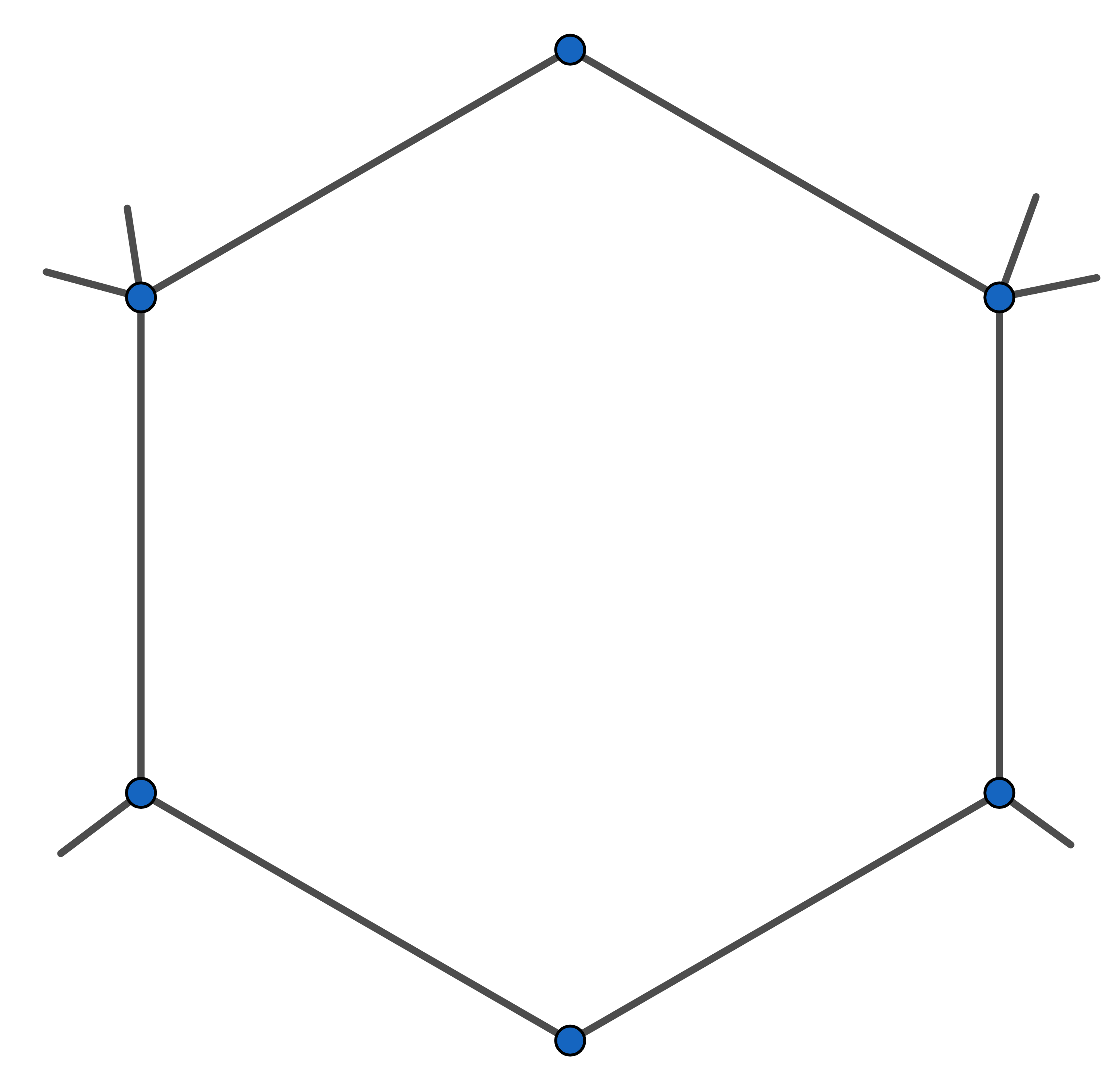}
\hspace{0.1cm}
$\rightarrow$
\hspace{0.1cm}
\includegraphics[width=2.75cm]{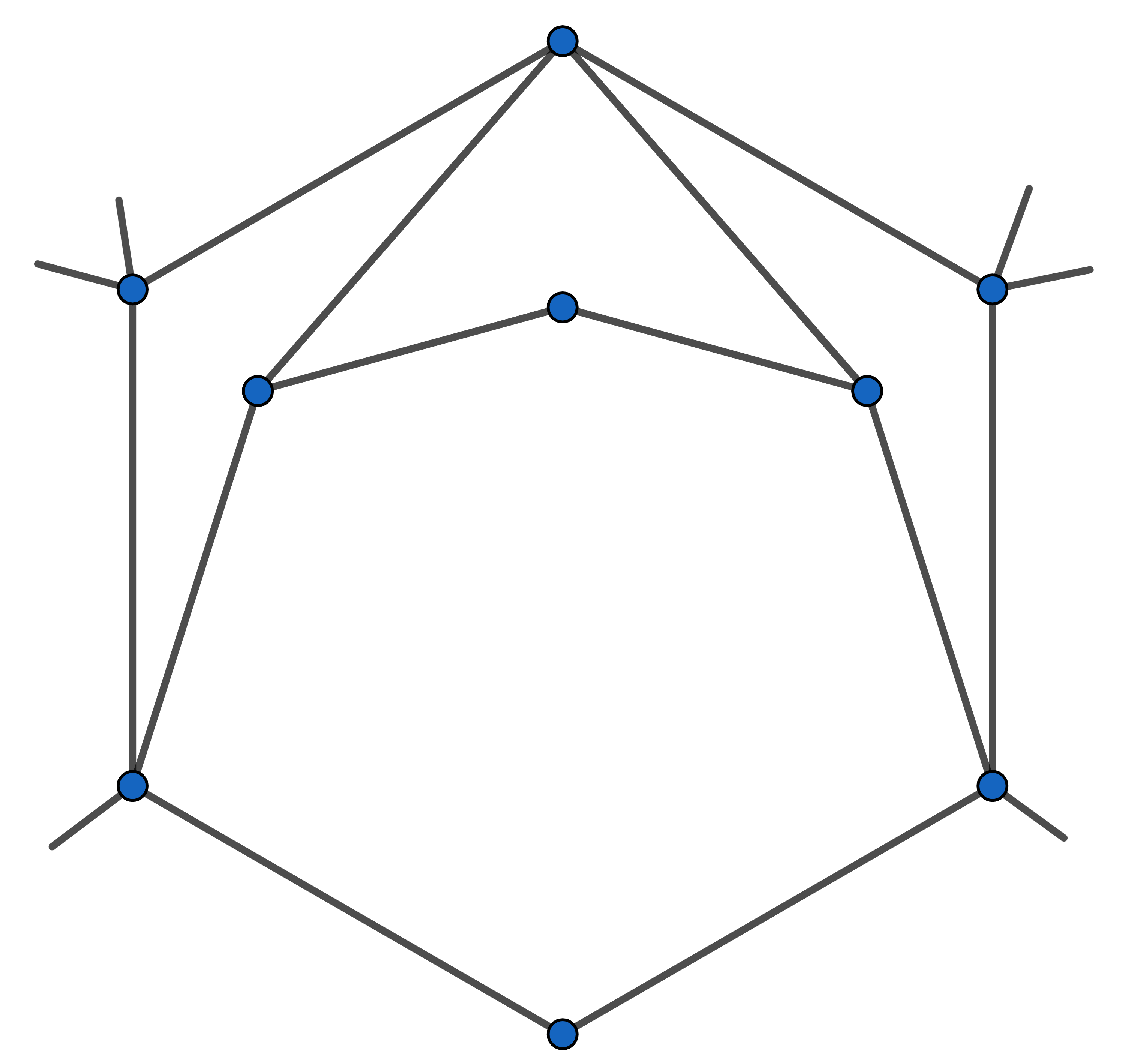}
\caption{}
\label{fig:38t1}
\end{subfigure}
    \hfill
\begin{subfigure}{0.49\textwidth}
\centering
\includegraphics[width=2.75cm]{38s.png}
\hspace{0.05cm}
$\rightarrow$
\hspace{0.05cm}
\includegraphics[width=2.75cm]{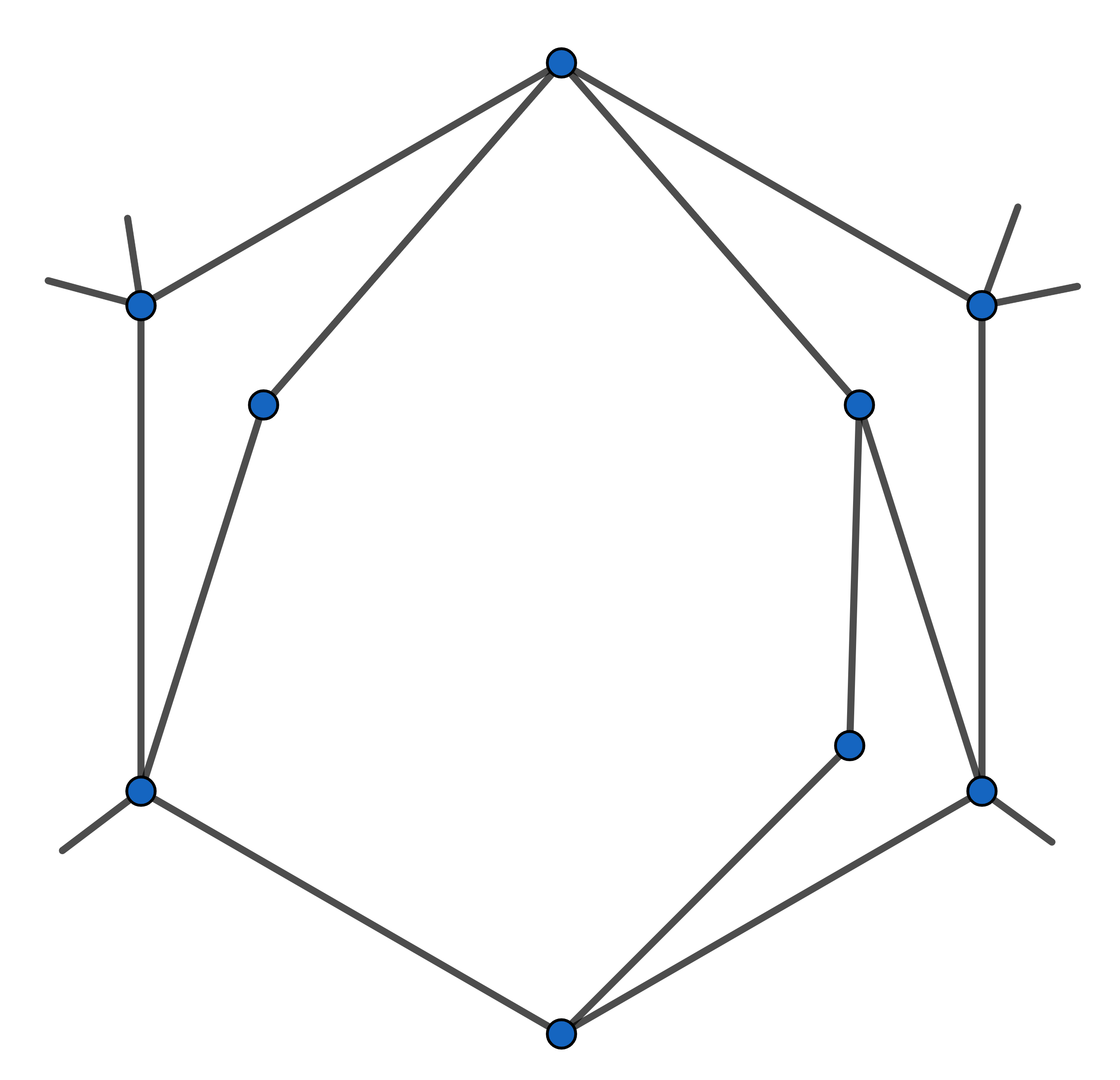}
\caption{}
\label{fig:38t2}
\end{subfigure}
\caption{Half-edges show the exact degrees of the vertices. All transformations are applied to a hexagon of vertex degrees $2,4,3,2,3,4$ in order. In the resulting graph, the next transformation is always applied to the inner hexagon, of same vertex degrees $2,4,3,2,3,4$. By mirror image of Figure \ref{fig:38t2} we mean w.r.t.~the vertical axis.}
\label{fig:38t}
\end{figure}

\begin{figure}[h!]
\centering
\begin{subfigure}{0.49\textwidth}
\centering
\includegraphics[width=2.75cm]{38s.png}
\hspace{0.05cm}
$\rightarrow$
\hspace{0.05cm}
\includegraphics[width=2.75cm]{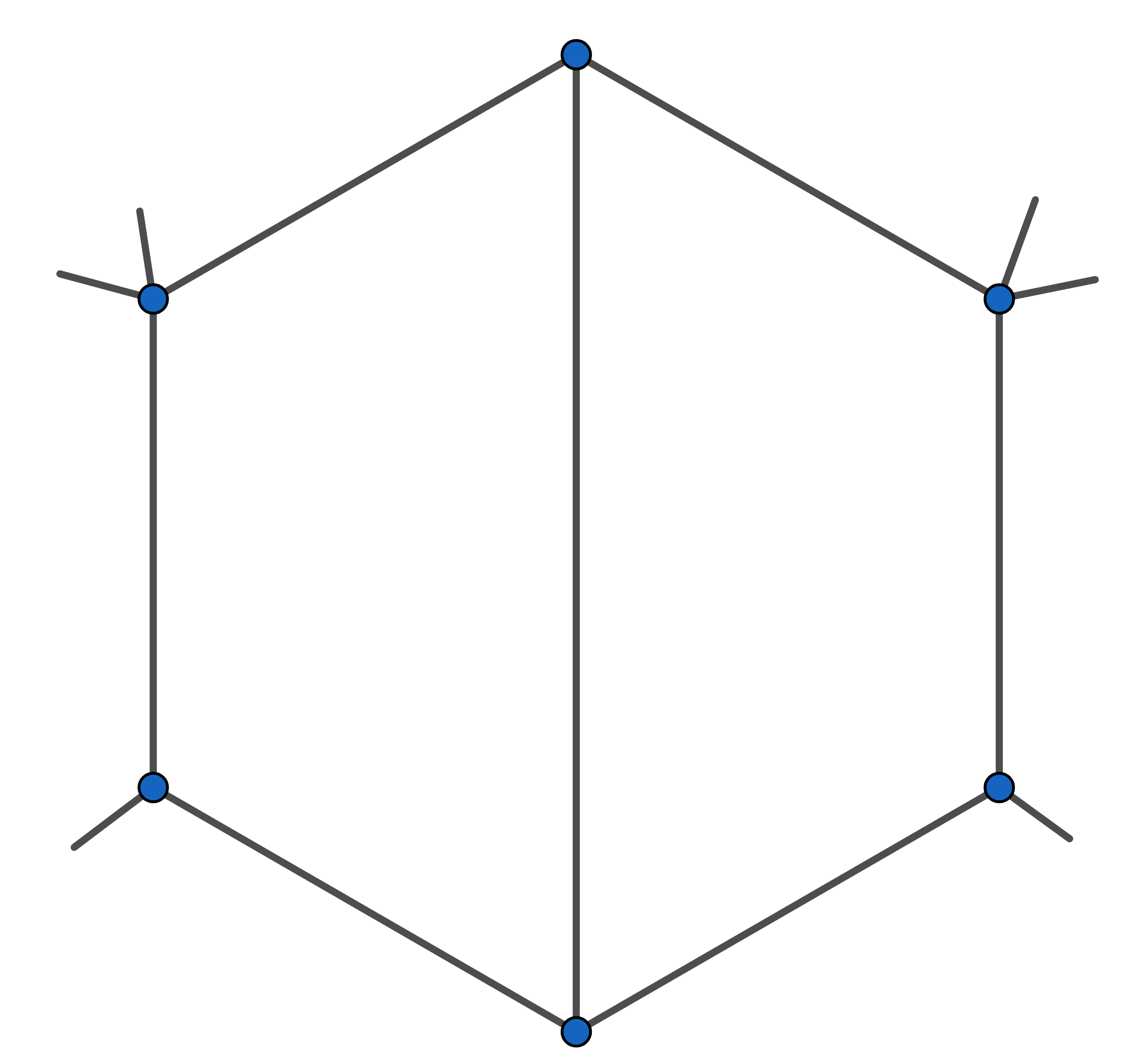}
\caption{}
\label{fig:38f1}
\end{subfigure}
    \hfill
\begin{subfigure}{0.49\textwidth}
\centering
\includegraphics[width=2.75cm]{38s.png}
\hspace{0.05cm}
$\rightarrow$
\hspace{0.05cm}
\includegraphics[width=2.75cm]{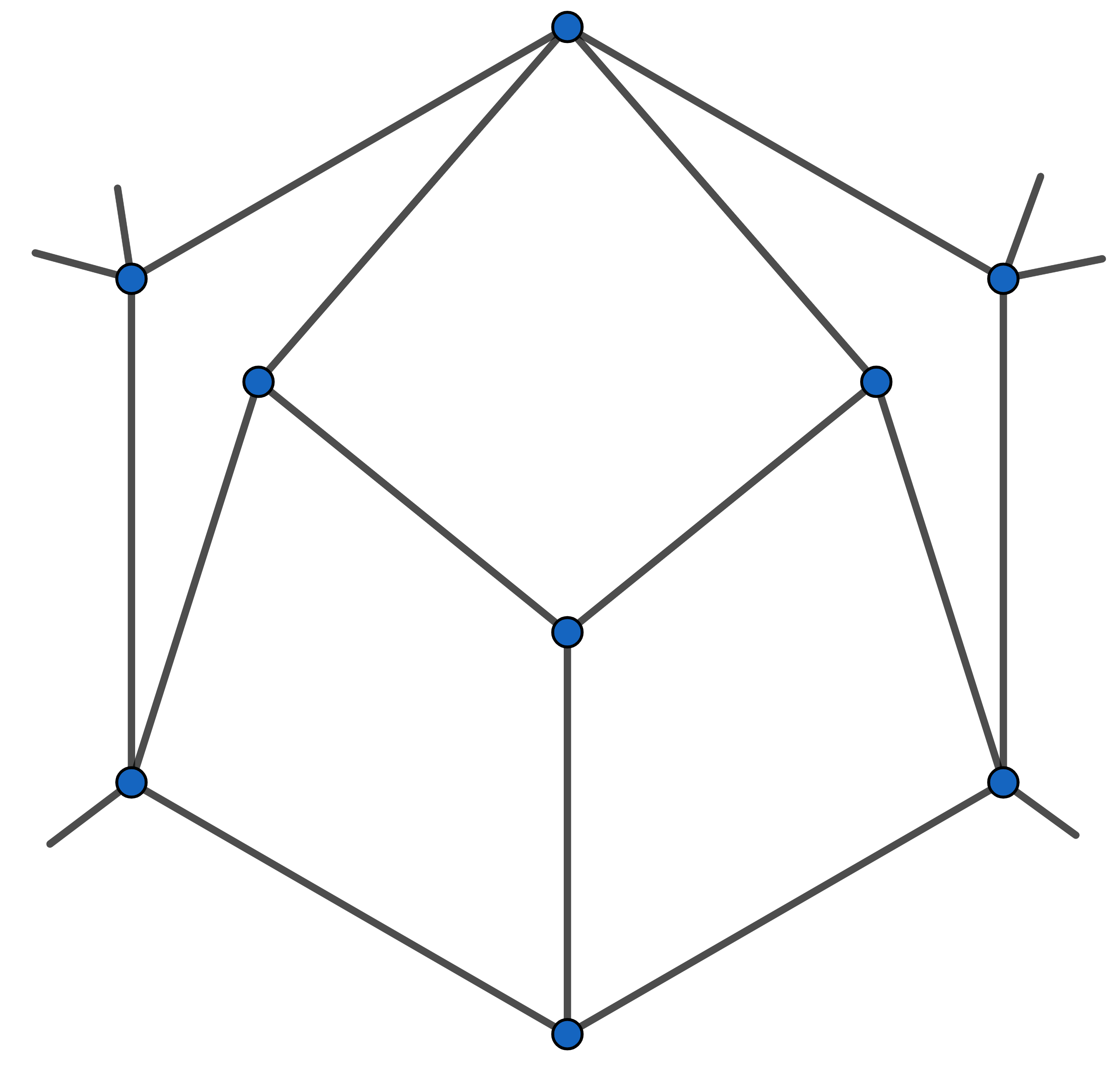}
\caption{}
\label{fig:38f2}
\end{subfigure}
\caption{The two possibilities for the final transformation in Theorem \ref{thm:3333}. Half-edges show the exact degrees of the vertices.}
\label{fig:38f}
\end{figure}

We now turn to \eqref{q2}. We begin with the above observation that if $\calP=J\wedge K_2$ is a polyhedron, then it has at least $8$ vertices of degree $3$, hence if $\calP=J\wedge K_2$ is a vertex-regular polyhedral Kronecker product, then $\calP$ is cubic. We note that this is the other extreme case opposed to \eqref{q4}, where the number of degree $3$ vertices was minimised.


We have the following two theorems. They will be proven in Section \ref{sec:cub}.

\begin{thm}
\label{thm:cubpl}
Let $J$ be planar, and $\calP=J\wedge K_2$ a cubic $3$-polytope. If $J$ is a $3$-polytope, then it has at most four odd faces. Moreover, for each of the cases \ref{eq:c0}, \ref{eq:c1}, \ref{eq:c2}, \ref{eq:c3} for $J$ (Theorem \ref{thm:0123}), there are infinitely many solutions.
\end{thm}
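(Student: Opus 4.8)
The plan is to prove Theorem~\ref{thm:cubpl} in two parts: first the structural bound that a polyhedral factor $J$ has at most four odd faces, and second the existence of infinitely many solutions in each of the four cases. For the structural part, I would start from the hypothesis that $\calP=J\w K_2$ is cubic, i.e.~every vertex of $\calP$ has degree~$3$. Since the Kronecker product doubles each vertex $a$ of $J$ into $(a,x),(a,y)$, and the degree of each of these in $\calP$ equals $\deg_J(a)$, cubicity of $\calP$ forces $J$ itself to be cubic. Now I invoke the face/vertex counting already set up in the excerpt: for a cubic polyhedron, the handshaking lemma gives $2q=3p$, and Euler's formula $p-q+r=2$ then pins down $\sum_{\text{faces}}(\text{length})=2q=3p$ together with $r=p/2+2$. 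Combining these with the standard face-length identity $\sum_t (6-t)\,f_t=12$ (where $f_t$ counts faces of length $t$) bounds the deficiency. The key extra input is parity: by Theorem~\ref{thm:ab}, since $J\w K_2$ is a polyhedron, $J$ satisfies Definition~\ref{cond1}, and in the polyhedral cases \ref{eq:c1},\ref{eq:c2},\ref{eq:c3} of Theorem~\ref{thm:0123} the number of odd faces of $J$ is respectively exactly $2$, exactly $4$, and at least~$4$. So the real content is to rule out more than four odd faces, i.e.~to eliminate the tail of case~\ref{eq:c3}.

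For that elimination, I would argue directly with the cubic constraint. In a cubic planar graph each vertex lies on exactly three faces, so a configuration as in \ref{eq:c3} — all odd faces but one meeting at a single vertex $v$ — is severely restricted: the vertex $v$ of degree~$3$ can be incident to at most three faces, hence at most three of the odd faces can share the common vertex. Together with the one remaining odd face this caps the total number of odd faces at four, which is exactly the claimed bound and simultaneously forces case~\ref{eq:c3} to degenerate into case~\ref{eq:c2} (or the boundary configuration with four odd faces). I expect this incidence count to be the cleanest route; the alternative of pushing the $\sum_t(6-t)f_t=12$ identity requires also controlling faces of length $5$ and is messier.

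For the existence half, the strategy is to exhibit, for each case, an explicit infinite family of cubic polyhedra $J$ (or of factors $J$ of connectivity~$2$ in case~\ref{eq:c0}) whose Kronecker cover is a cubic polyhedron, and to verify the defining conditions. The natural seeds are the cubic polyhedra with a controlled small number of odd faces: for case~\ref{eq:c1} (exactly two disjoint odd faces) the prisms $C_n\q K_2$ with $n$ even, suitably chosen, and their iterated expansions along an even cycle give an infinite family with two disjoint odd faces; for case~\ref{eq:c2} one wants four pairwise-intersecting odd faces with empty triple intersections, realizable by a cubic ``barrel'' construction; case~\ref{eq:c3} with exactly four odd faces (three through a common vertex) is realized by the dual-type families suggested near Figure~\ref{fig:ds1}. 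In each instance the verification reduces to checking that $J$ is cubic, planar, $3$-connected, and that the odd-face pattern matches the relevant condition of Theorem~\ref{thm:0123}, after which Theorem~\ref{thm:ab} guarantees $J\w K_2$ is a polyhedron and the cubicity of $\calP$ is automatic from that of $J$.

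The main obstacle, I expect, is case~\ref{eq:c0}, where $J$ has connectivity~$2$ rather than being a polyhedron, so Theorem~\ref{thm:0123}\ref{eq:c0} must be matched together with the $2$-cut and odd-region bookkeeping of Definition~\ref{cond1}. Here I would build the infinite family by gluing two cubic ``half-graphs,'' each carrying one odd region, across a $2$-cut $\{a,b\}$, arranging that every odd region other than the two designated ones contains a $2$-cut and that $J-a-b$ splits into exactly two components each with an odd region; producing these with all vertices of degree~$3$ while preserving planarity is the delicate step, and I would likely present a single parametrized gadget and let a free parameter range over an infinite set to secure infinitude. Throughout, the recurring technical point to watch is that cubicity of $J$ is not merely convenient but forced, so each construction must be cubic from the outset rather than only after the cover is taken.
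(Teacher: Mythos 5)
Your structural half is correct and coincides with the paper's own argument. You observe that cubicity of $\calP$ forces $J$ to be cubic (degrees are preserved under $\w K_2$), and then kill the tail of case \ref{eq:c3} by the incidence count at the common vertex: a vertex of degree $3$ in a planar graph lies on at most three faces, so at most three odd faces can share the common vertex, and with the one remaining odd face the total is capped at four. This is exactly the paper's one-line contradiction. One small quibble: a four-odd-face instance of \ref{eq:c3} does not ``degenerate into'' case \ref{eq:c2} as you suggest, since in \ref{eq:c3} three odd faces meet at a vertex (a nonempty triple intersection), which \ref{eq:c2} forbids; the cases remain distinct, though this does not affect the bound. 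The Euler-formula bookkeeping you set up is, as you yourself suspect, unnecessary.

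The existence half, however, has a genuine gap: no construction is actually exhibited, and the one concrete seed you name is wrong. Prisms $C_n\q K_2$ with $n$ \emph{even} are bipartite, so their Kronecker cover is two disjoint copies of the prism and in particular not a polyhedron; for case \ref{eq:c1} you need $n$ odd, so that the two $n$-gonal bases are the two disjoint odd faces. Likewise the graph of Figure \ref{fig:ds1} has vertices of degree $4$, so the ``dual-type families suggested near Figure \ref{fig:ds1}'' are not cubic as they stand; the ``barrel'' for case \ref{eq:c2} and the glued gadget for case \ref{eq:c0} are left entirely as plans, and case \ref{eq:c0} is, as you note, the delicate one (the $2$-cut and odd-region bookkeeping must be arranged with all degrees equal to $3$). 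The paper discharges all four cases uniformly and at once: it gives one explicit base example per case (Figure \ref{fig:3reg}) and a single local transformation applied to a quadrilateral region (Figure \ref{fig:tr3r}) that preserves planarity, $3$-regularity, the number of odd regions, and their intersection pattern; since the transformation reproduces quadrilateral regions it can be iterated, yielding infinitely many solutions of each of the types \ref{eq:c0}--\ref{eq:c3} of Theorem \ref{thm:0123}. To complete your version you would have to supply verified infinite families for \ref{eq:c0}, \ref{eq:c2}, and \ref{eq:c3} and repair the seed for \ref{eq:c1} --- precisely the content your sketch leaves open, and which a growth mechanism like the paper's quadrilateral move supplies for free.
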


\begin{thm}
\label{thm:cub}
The graph $\calP=J\wedge K_2$ is a cubic polyhedron if and only if $J$ is obtained from a cubic, planar, either $3$-connected or semi-hyper-$2$-connected multigraph $J_2$ by splitting certain edges of an even region $\calR_2$ one or more times, thus inserting the distinct vertices
\begin{equation}
\label{eq:addv}
a_1,a_2,\dots,a_m,b_1,b_2,\dots,b_m, \quad m\geq 2,
\end{equation}
followed by inserting the edges
\[a_1b_1,a_2b_2,\dots,a_mb_m,\]
such that the following are all satisfied. The region $\calR_2$ is adjacent to every odd region of $J_2$ (if any exist); the vertices \eqref{eq:addv} are inserted in $\calR_2$ either in the order 
\[a_1,a_2,\dots,a_m,b_m,b_{m-1},\dots,b_1\]
or in the order
\[a_1,a_2,\dots,a_m,b_1,b_2,\dots,b_m;\]
at least two edges of $\calR_2$ are split; each edge of $\calR_2$ is split an odd number of times if and only if it belongs to an odd region of $J_2$, thus the graph $J_1$ obtained from $J_2$ on inserting \eqref{eq:addv} is bipartite; $J_1$ is a simple graph; each of the graphs $J_1+a_ib_i$, $1\leq i\leq m$ is not bipartite.
\end{thm}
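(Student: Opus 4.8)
The plan is to read the statement as a dictionary between two descriptions of the same cubic factor $J$, using Theorem \ref{thm:ab} to supply polytopality for free and proving both implications in tandem by building a reversible correspondence $J_2\leftrightarrow J'$. First I would record the degree computation: in $J\w K_2$ the vertex $(a,x)$ has degree $\deg_J(a)\cdot\deg_{K_2}(x)=\deg_J(a)$, so $\calP=J\w K_2$ is cubic if and only if $J$ is cubic. Combined with Theorem \ref{thm:ab}, which says $\calP$ is a $3$-polytope exactly when $J$ satisfies Definition \ref{cond1}, this reduces the whole statement to the claim that ``$J$ is cubic and satisfies Definition \ref{cond1}'' is equivalent to the $J_2$-subdivision description. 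From now on I write $J=J'+a_1b_1+\dots+a_mb_m$ as in Definition \ref{cond1}.

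Next I would pin down the local degrees. Each added edge $a_ib_i$ raises the degree of its two endpoints by one, and $J'$ has minimum degree at least $2$; since $J$ is cubic, no vertex can be an endpoint of two added edges (that would force degree $\le 1$ in $J'$). Hence the $2m$ vertices $a_1,\dots,b_m$ are pairwise distinct and have degree exactly $2$ in $J'$, every other vertex of $J'$ has degree $3$, and in particular $J'$ is necessarily semi-hyper-$2$-connected rather than $3$-connected. I would then define $J_2$ by suppressing (smoothing) all degree-$2$ vertices of $J'$: as these lie on $\calR$, suppression is a planar homeomorphism that turns $\calR$ into a region $\calR_2$ and yields a cubic planar multigraph whose only subdivided edges are edges of $\calR_2$. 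Reversing this — subdividing the chosen edges of $\calR_2$ and reinserting the matching — recovers $J$, and the two admissible orders \eqref{eq:ord1}, \eqref{eq:ord2} carry over verbatim, so the correspondence is exactly the one in the statement. The inequality $m\ge 2$ then follows because for $m=1$ the two copies of $J'$ in $\calP$ are joined only by the edges $(a_1,x)(b_1,y)$ and $(a_1,y)(b_1,x)$, so $\{(a_1,x),(a_1,y)\}$ is a $2$-cut and $\calP$ cannot be $3$-connected.

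The combinatorial heart is transcribing the remaining clauses of Definition \ref{cond1}. For bipartiteness I would use that a $2$-connected plane graph is bipartite iff every face is even: subdividing an edge once flips the parity of exactly the two faces incident to it, and since every split edge lies on $\calR_2$ (which is even) and on one further face $F_e$, the length of $\calR$ stays even (the total number of splits is $2m$, forcing $\calR_2$ even) while $F_e$ becomes even iff it receives an even number of splits in total. In the $3$-connected simple case two faces meet in at most one edge, so $F_e$ receives splits only from $e$, and $J'$ is bipartite precisely when each edge of $\calR_2$ is split an odd number of times iff its other face is odd — equivalently, $\calR_2$ is adjacent to every odd face of $J_2$ and the parity rule holds; the multigraph case needs the same count carried out slightly more carefully. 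The clauses ``$J_1$ simple'' and ``$J_1+a_ib_i$ not bipartite'' then transcribe directly from the corresponding clauses of Definition \ref{cond1}.

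The step I expect to be the main obstacle is the connectivity bookkeeping: matching ``$J'$ is semi-hyper-$2$-connected (or $3$-connected) with every $2$-cut on $\calR$ in the allowed position'' against ``$J_2$ is $3$-connected or semi-hyper-$2$-connected and at least two edges of $\calR_2$ are split.'' Subdivision creates genuinely new $2$-cuts — the two branch-endpoints of each subdivided arc — and I must show these are exactly the $2$-cuts of $J'$, that each leaves precisely two components (so $J'$ is semi-hyper-$2$-connected, not merely $2$-connected), and that their forbidden positions translate into splitting at least two distinct edges. This is where a single split edge must be excluded: concentrating all $2m$ insertions on one edge $uv$ of $\calR_2$ produces in $J'$ a $2$-cut $\{u,v\}$ lying on the arc of $\calR$ between $b_1$ and $a_1$ (extrema included), which is exactly the position forbidden by Definition \ref{cond1}; conversely, with at least two split edges and $J_2$ suitably connected I would check directly that every $2$-cut of $J'$ is induced by a subdivided arc and respects the allowed positions. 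Assembling these translations in both directions, together with Theorem \ref{thm:ab}, yields the stated characterisation.
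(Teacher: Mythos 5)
Your proposal is correct and follows essentially the same route as the paper's proof: reduce via the degree identity $\deg_{\calP}(a,x)=\deg_J(a)$ to ``$J$ cubic plus Definition \ref{cond1}'', show the $2m$ vertices $a_1,\dots,b_m$ are pairwise distinct of degree exactly $2$ in $J'$, suppress/contract them to obtain the cubic planar multigraph $J_2$ (the paper's $J''$), translate the parity and odd-region-adjacency conditions, exclude a single split edge by exhibiting the endpoints of that edge as a $2$-cut of $J'$ in a position forbidden by Definition \ref{cond1} (the paper's $\{v,w\}$ on $\calS$ is exactly your $\{u,v\}$; note only that for order \eqref{eq:ord2} the forbidden arc is between $b_m$ and $a_1$ rather than $b_1$ and $a_1$), and obtain the converse by feeding $J'=J_1$ back into Theorem \ref{thm:ab}. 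The two points you flag as needing extra care (the multigraph parity count and the converse $2$-cut verification) are treated just as tersely in the paper itself, so your sketch matches its proof in both structure and level of detail.
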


\begin{ex}
Theorem \ref{thm:cub} is illustrated in Figure \ref{fig:JeP}. We have a planar, cubic, semi-hyper-$2$-connected multigraph $J_2$ (Figure \ref{fig:J2}), a graph $J_1$ from which $J_2$ may be obtained by contracting the vertices $a_1,a_2,b_1,b_2$ (Figure \ref{fig:J1}), the cubic graph $J=J_1+a_1b_1+a_2b_2$ (Figure \ref{fig:J}), and the planar, $3$-connected, cubic graph $J\w K_2$ (Figure \ref{fig:JP}).
\begin{figure}[h!]
\centering
\begin{subfigure}{0.24\textwidth}
\centering
\includegraphics[width=3.5cm]{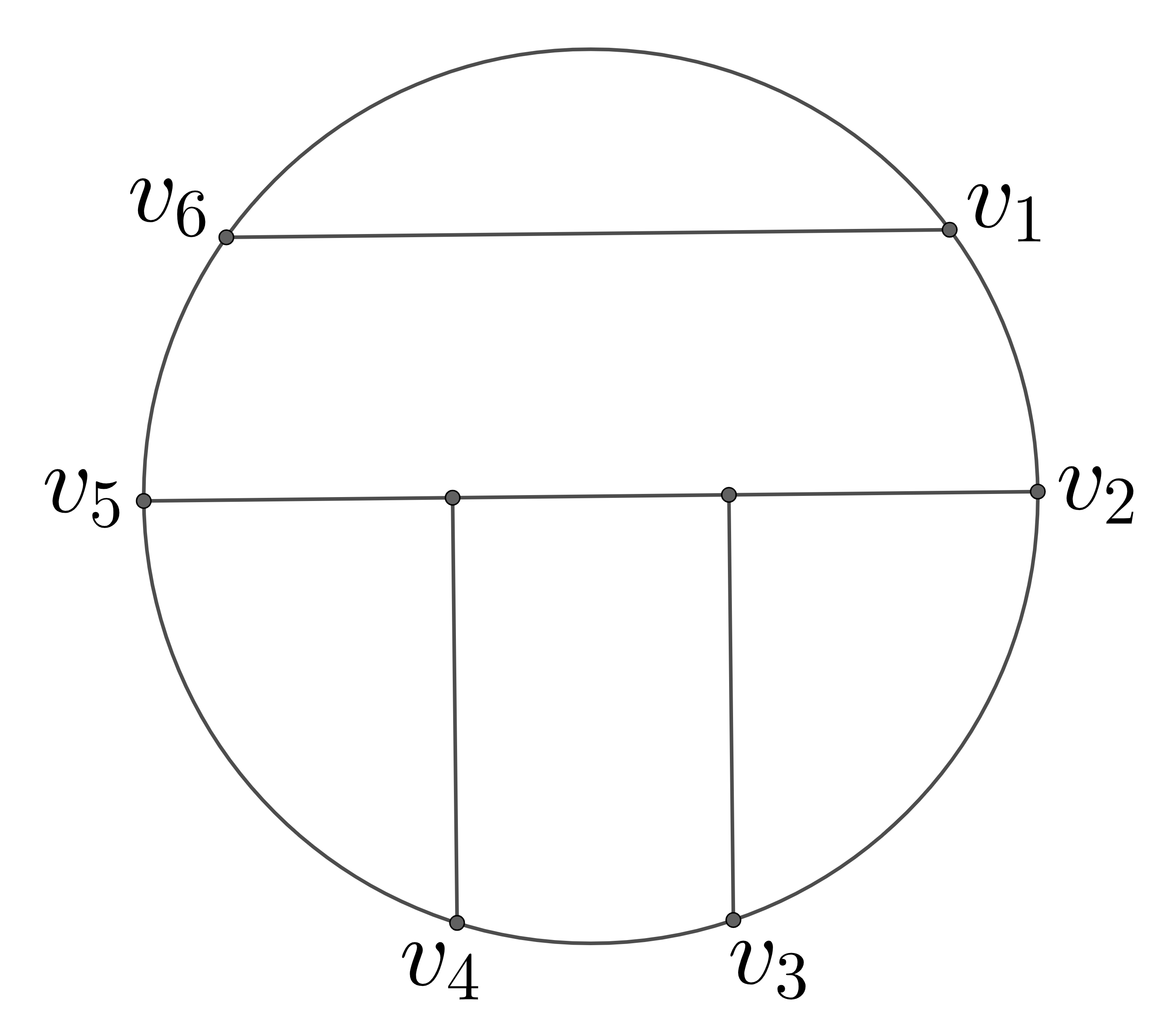}
\caption{$J_2$.}
\label{fig:J2}
\end{subfigure}
    \hfill
\begin{subfigure}{0.24\textwidth}
\includegraphics[width=3.5cm]{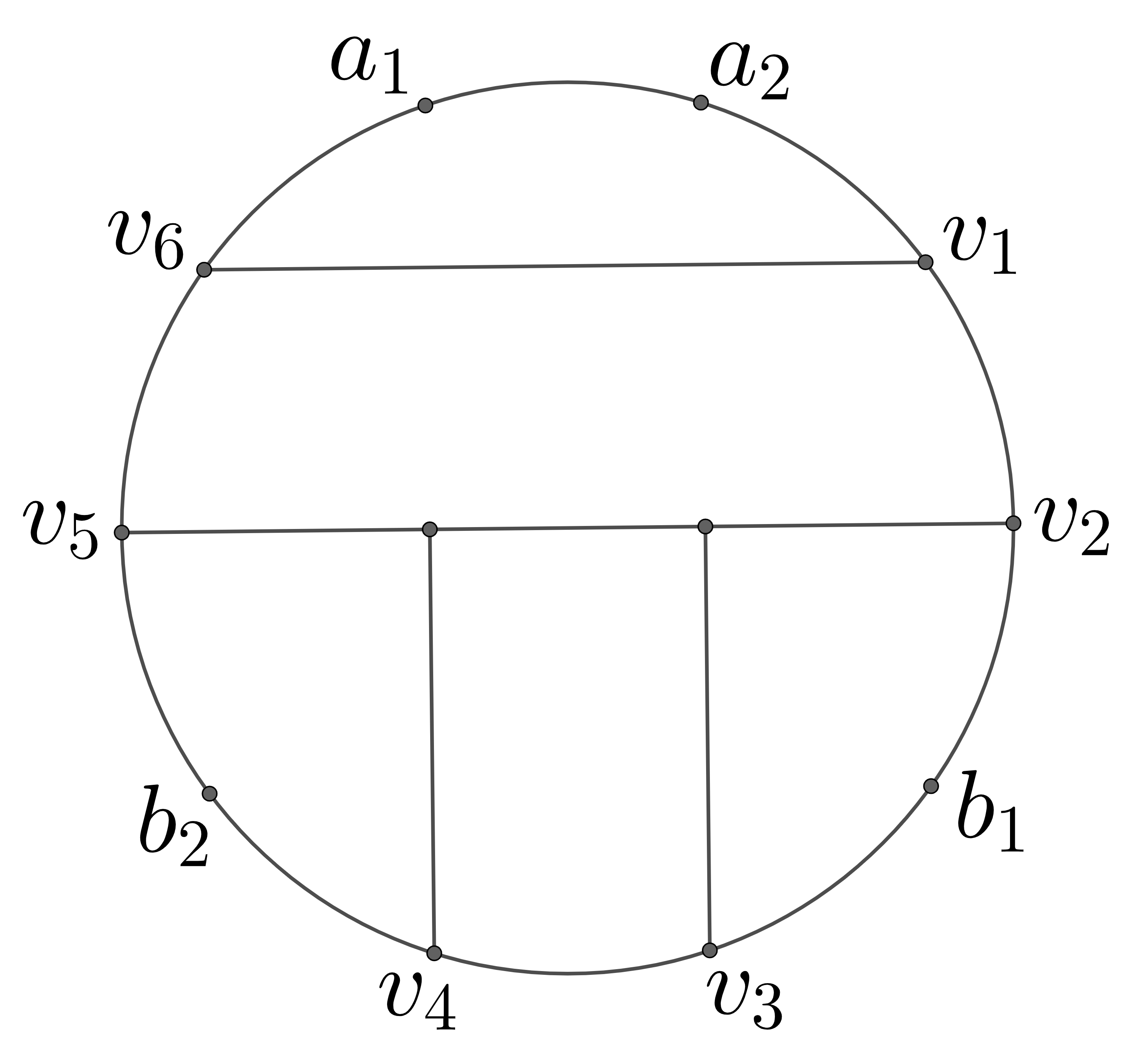}
\caption{$J_1$.}
\label{fig:J1}
\end{subfigure}
    \hfill
\begin{subfigure}{0.24\textwidth}
\centering
\includegraphics[width=3.75cm]{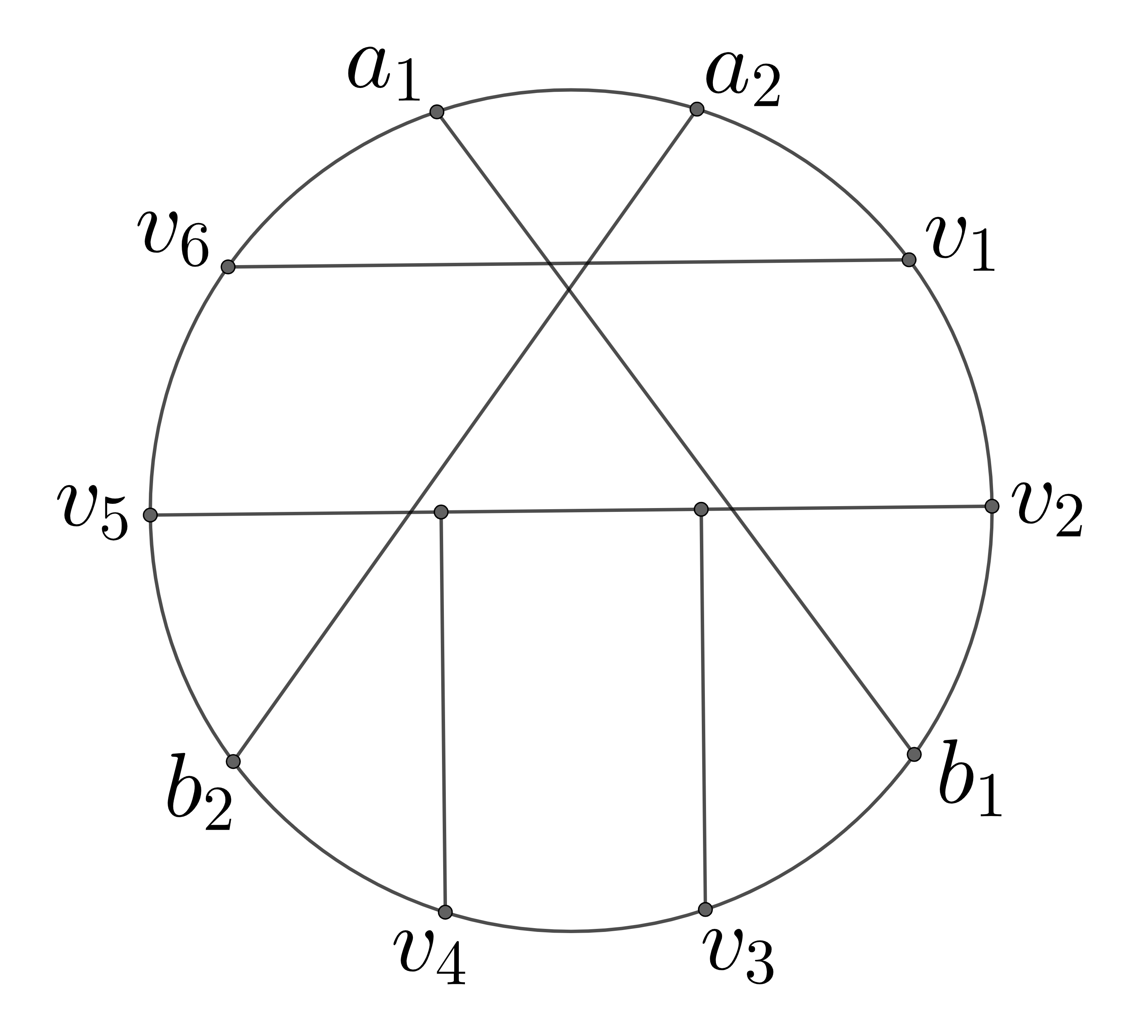}
\caption{$J$.}
\label{fig:J}
\end{subfigure}
    \hfill
\begin{subfigure}{0.24\textwidth}
\includegraphics[width=3.25cm]{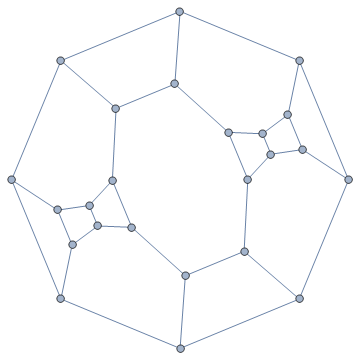}
\caption{$J\w K_2$.}
\label{fig:JP}
\end{subfigure}
\caption{Illustration of Theorem \ref{thm:cub}.}
\label{fig:JeP}
\end{figure}
\end{ex}

\subsection{Main results on cancellation and simultaneous products}
\label{sec:res2}
We now turn to the question \eqref{q3}. We begin by introducing the problem of Kronecker product cancellation. If the multigraphs $A,B,C$ satisfy
\[A\w C\simeq B\w C,\]
when is it true that $A\simeq B$? This problem has attracted much attention in recent literature \cite{lova71, fllp07, abay09, hamm09, aaghlt}.

If $C$ contains a loop, then cancellation occurs \cite[Proposition 9.6]{haimkl}. If $C$ contains an odd cycle, then cancellation occurs \cite[Theorem 9.10]{haimkl}. If we allow loops in $A,B$, then the graphs admitting cancellation are completely characterised in \cite[Theorem 9.16]{haimkl}.

Henceforth we impose instead that $A,B,C$ are simple, non-empty graphs, with $C$ bipartite (i.e., $C$ has no odd cycles). In this setting, whether $A\w C\simeq B\w C$ implies $A\simeq B$ or not is a deep and interesting open problem. As remarked in \cite{haimkl} after Proposition 9.9, here $A\w C\simeq B\w C$ implies $A\w K_2\simeq B\w K_2$. Therefore, to settle the Kronecker cancellation problem in the positive, it is sufficient to determine:
\[\text{when does } \quad A\w K_2\simeq B\w K_2 \quad \text{ imply } \quad A\simeq B?\]

Interestingly, it was observed in \cite{impi08} that the Petersen graph and the graph $B$ in Figure \ref{fig:notpet} have the same Kronecker cover, that is called the Desargues graph (Figure \ref{fig:desarg}). Therefore, even if we restrict to simple, non-empty graphs, cancellation may still fail.  
On the other hand, it was recently proved that apart from the Desargues graph, every other generalised Petersen graph is the Kronecker cover of at most one graph \cite{krpi19}.
\begin{figure}[ht]
\centering
\begin{subfigure}{0.48\textwidth}
\centering
\includegraphics[width=4.25cm]{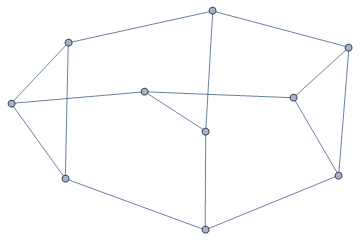}
\caption{The graph $B$.}
\label{fig:notpet}
\end{subfigure}
    \hfill
\begin{subfigure}{0.48\textwidth}
\centering
\includegraphics[width=4.cm]{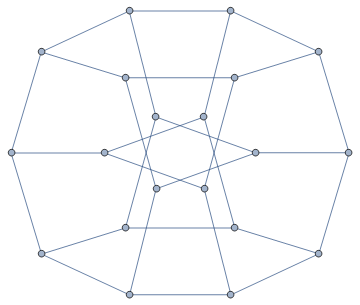}
\caption{The Desargues graph $B\w K_2$.}
\label{fig:desarg}
\end{subfigure}
\caption{We have $A\not\simeq B$, and $A\w K_2\simeq B\w K_2$, where $A$ is the Petersen graph.}
\label{fig:pet}
\end{figure}

It is worth noting that the analogous question for the Cartesian product is easy. In the expression $A\q C\simeq B\q C$ cancellation always holds provided that $C$ has at least one edge \cite[Theorem 6.21]{haimkl}.

Due to \cite[Proposition 1.1]{mafkpr}, investigating $3$-polytopes that are Kronecker products in distinct ways is equivalent to considering the following instance of the Kronecker cancellation problem. If we have a $3$-polytope
\[\calP\simeq J\w K_2\simeq L\w K_2,\]
when does it follow that $J\simeq L$? We will prove that here cancellation always holds.

\begin{thm}[Main Theorem]
\label{thm:JL}
If $J\w K_2\simeq L\w K_2$ and this product is a $3$-polytope, then $J\simeq L$.
\end{thm}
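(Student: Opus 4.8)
The plan is to recast the problem as one about involutions of the fixed graph $\calP$, and then to use the rigidity of its spherical embedding to show that such involutions are essentially unique. First I would pass from factorisations to involutions. Since $\calP=J\w K_2$ is connected, $J$ is non-bipartite and $\calP$ is the bipartite double (Kronecker) cover of $J$. By the standard theory of such covers \cite{haimkl}, a connected bipartite graph $\calP$, equipped with its unique $2$-colouring $(X,Y)$, satisfies $\calP\simeq J\w K_2$ precisely when $\calP$ admits a fixed-point-free involutory automorphism $\tau$ interchanging $X$ and $Y$; in that case $J\simeq\calP/\tau$, the quotient whose vertices are the pairs $\{v,\tau v\}$. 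Thus $\calP\simeq J\w K_2\simeq L\w K_2$ furnishes two such involutions $\tau_J,\tau_L$, and the theorem is equivalent to $\calP/\tau_J\simeq\calP/\tau_L$. Because $J$ and $L$ are simple, I note at once that neither $\tau$ can fix an edge: a $\tau$-invariant edge $e=u(\tau u)$ would project to a loop in the quotient.

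Next I would exploit planarity. As $\calP$ is $3$-connected and planar, Whitney's theorem \cite{whitco} gives it an essentially unique embedding in $S^2$, so every automorphism---in particular each $\tau$---extends to a self-homeomorphism of $S^2$ permuting vertices, edges and faces. A periodic homeomorphism of $S^2$ is topologically conjugate to an orthogonal map, so an involution is determined up to conjugacy by its fixed-point set, which is empty, a circle, or two points. A fixed circle would meet the $1$-skeleton either along an edge (forcing a fixed vertex) or transversally at an interior edge-point (forcing a $\tau$-invariant edge), and both are excluded; hence the reflection case cannot occur. So each valid $\tau$ is of exactly one of two kinds: \emph{type I}, with empty fixed set (a free involution, $S^2/\tau\approx\mathbb{RP}^2$); or \emph{type II}, a $\pi$-rotation whose two fixed points, being neither vertices nor edge-interiors, are the centres of two $\tau$-invariant faces. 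A short Euler-characteristic count then shows that in type II the two invariant faces are exactly the odd faces of the planar quotient, so $J$ has precisely two odd faces (Condition \ref{eq:c1} of Theorem \ref{thm:0123}), while type I yields the remaining cases.

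The decisive simplification is that, after conjugating the finite group $\mathrm{Aut}(\calP)$ into $O(3)$ once and for all, the only fixed-point-free involution available is the antipodal map $-I$. Consequently a valid type-I involution, if it exists, is unique and equals $-I$, so if $\tau_J$ and $\tau_L$ are both of type I they are literally equal and $J\simeq L$ immediately. It therefore remains to treat type II and to rule out the mixed case. Here I would again invoke the uniqueness of the spherical embedding: the two poles of a type-II rotation are forced to lie at the two odd faces of the planar quotient, and since a $3$-connected planar graph has a single face set, these two faces---hence the rotation axis, and the involution up to $\mathrm{Aut}(\calP)$-conjugacy---are determined by $\calP$. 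In particular $\calP$ cannot carry both a type-I and a type-II valid involution, since the corresponding quotients would then be forced to have different numbers of odd faces in their unique embeddings. As conjugate involutions give isomorphic quotients, this yields $J\simeq L$ in every case.

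The main obstacle is precisely this last step: establishing that a valid involution is unique up to conjugacy in $\mathrm{Aut}(\calP)$, and in particular that the two kinds cannot coexist. This is exactly the property that fails for non-polytopal bipartite graphs---witness the Desargues graph \cite{impi08}, whose two inequivalent involutions yield the non-isomorphic Petersen graph and the graph of Figure \ref{fig:notpet}---so the argument must genuinely use the $3$-connected planarity of $\calP$ through Whitney's rigidity, rather than any purely local or counting feature of the cover.
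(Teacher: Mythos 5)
Your reduction of the theorem to the statement ``every valid colour-swapping, fixed-point-free, edge-free involution of $\calP$ is unique up to conjugacy in $\mathrm{Aut}(\calP)$'' is a genuinely different route from the paper's (which never mentions involutions: it works directly from Theorem \ref{thm:ab}, tracks how the two copies of the region $\calS$ coming from $L'$ must cross the two copies of $\calR$ coming from $J'$, uses planarity to show the crossing edges pair up into facial $4$-cycles, decomposes $\calP$ into two copies each of pieces $G_1,G_2$, and then reconstructs both $J$ and $L$ as $G_1\cup G_2$ plus the \emph{same} edge sets $a_kb_k$ and $c_kd_k$). But your proposal does not actually prove the uniqueness claim, and that claim is where the entire content of the theorem sits -- as you yourself note in your last paragraph. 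Even in the one case you do treat (type I vs.\ type I), the justification is insufficient: Ker\'ek\-j\'art\'o's theorem conjugates each periodic homeomorphism of $S^2$ \emph{separately} into $O(3)$, whereas your conclusion $\tau_J=\tau_L=-I$ requires conjugating both involutions, hence the whole finite group, \emph{simultaneously}; this needs the geometrization of finite group actions on $S^2$ (or Mani's realization theorem for polyhedral graphs), a citable but substantial input that your text neither proves nor references.

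The remaining cases are not handled at all. For type II vs.\ type II: two half-turns about distinct axes can perfectly well coexist inside a finite subgroup of $O(3)$ and need not be conjugate within it, and nothing in your argument determines the pair of $\tau$-invariant faces from $\calP$ alone -- note these are faces of the \emph{cover} $\calP$, not ``the odd faces of the planar quotient,'' since the quotient depends on the involution you are trying to pin down. For the mixed case, your stated reason is circular: that ``the corresponding quotients would be forced to have different numbers of odd faces'' is a contradiction only if you already know the quotients are isomorphic, which is precisely Theorem \ref{thm:JL}; a priori, $J$ planar with two odd faces (Condition \ref{eq:c1}) and $L$ of another type would simply witness $J\not\simeq L$. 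Indeed, since $\calP$ bipartite and connected forces any double cover realizing it to be the canonical bipartite one, isomorphic quotients correspond exactly to $\mathrm{Aut}(\calP)$-conjugate involutions -- so your missing uniqueness claim is \emph{equivalent} to the theorem, and the proposal has restated the problem in covering-space language (where, as the Desargues example \cite{impi08} shows, it genuinely fails without planarity) rather than solved it. To complete your approach you would need an actual argument, using $3$-connected planarity, that two valid half-turns have symmetry-related invariant-face pairs and that a valid half-turn and a valid free involution cannot coexist; the paper's combinatorial decomposition supplies exactly the planarity leverage that this step demands.
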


Theorem \ref{thm:JL} will be proven in Section \ref{sec:jl}.

Next, we characterise the planar graphs that are Cartesian products in two distinct ways, and the polyhedra that are simultaneously Cartesian and Kronecker products.

\begin{thm}
\label{thm:dou}
A planar graph is expressible as Cartesian product in at most two distinct ways. Those expressible in two ways are the stacked cubes other than the cube itself
\[C_4\q P_m,\quad m\neq 2.\]
The only polyhedral graphs expressible as both Cartesian and Kronecker products are the stacked prisms of the form
\[C_{4n+2}\q P_m, \quad n\geq 1, \ m\geq 2
\qquad\text{ or }\qquad
C_{4n}\q P_{2m}, \quad n,m\geq 1,\]
and the graphs of the form
\[H\q K_2,\]
where $H$ is obtained from a $2\ell$-gon $[u_1,u_2,\dots,u_{2\ell}]$ by adding diagonals $u_iu_j$ in such a way that the resulting graph is still bipartite and planar, with
\[u_iu_{(i+\ell\mod 2\ell)}\not\in E(H), \quad 1\leq i\leq 2\ell,\]
and
\[u_iu_j\in E(H)\iff u_{(i+\ell\mod 2\ell)}u_{(j+\ell\mod 2\ell)}\in E(H).\]
\end{thm}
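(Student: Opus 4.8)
The plan is to treat the two assertions separately, both through the theory of graph products. For the first assertion I would restrict to connected $G$ (a Cartesian product is connected only when both factors are, which is the relevant case) and invoke the Sabidussi--Vizing unique factorisation theorem: $G\simeq F_1\q F_2\q\dots\q F_k$ for prime factors $F_i$, unique as a multiset. Every expression $G\simeq A\q B$ with $A,B$ nontrivial then corresponds to a partition of $\{F_1,\dots,F_k\}$ into two nonempty sub-multisets, so counting factorisations reduces to counting such partitions. The key geometric input is that the four-dimensional hypercube is non-planar (it is bipartite on $16$ vertices with $32>2\cdot16-4$ edges); since picking one edge in each $F_i$ exhibits the $k$-fold Cartesian power of $K_2$ as a subgraph of $G$, planarity forces $k\le 3$. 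As $k\le 2$ yields at most one factorisation, it remains to analyse $k=3$.

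For $k=3$, write $G\simeq F_1\q F_2\q F_3$. If two factors had at least three vertices they would each contain $P_3$, whence $P_3\q P_3\q K_2\subseteq G$; but this graph is bipartite on $18$ vertices with $33>2\cdot 18-4$ edges, hence non-planar. So at least two factors equal $K_2$ and $G\simeq C_4\q F$ with $F$ prime. A claw $K_{1,3}\subseteq F$ would force $C_4\q K_{1,3}=(K_{1,3}\q K_2)\q K_2\subseteq G$, which is non-planar: $K_{1,3}\q K_2$ has a $K_{2,3}$ minor, hence is not outerplanar, and it is classical that a prism $X\q K_2$ is planar if and only if $X$ is outerplanar; moreover $F$ cannot be a cycle, since a product of two cycles is toroidal. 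Thus $F\simeq P_m$ and $G\simeq C_4\q P_m$. The multiset $\{K_2,K_2,P_m\}$ admits the two partitions $\{K_2\}\sqcup\{K_2,P_m\}$ and $\{P_m\}\sqcup\{K_2,K_2\}$ when $m\ge 3$, but only one when $m=2$ (the cube, all three primes equal). This yields at most two factorisations in general, with exactly two precisely for the stacked cubes $C_4\q P_m$, $m\ge 3$.

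For the second assertion, recall that a polyhedral Kronecker product has the form $L\w K_2$ (Proposition 1.1 of \cite{mafkpr}) and so is a connected bipartite double cover. The guiding observation is that a connected bipartite graph $G$ with bipartition $(X,Y)$ satisfies $G\simeq L\w K_2$ for a simple, connected, non-bipartite $L$ if and only if $G$ has an involutory automorphism $\tau$ that is fixed-point-free, interchanges $X$ and $Y$, and sends every vertex to a non-neighbour (the last condition forbids loops in $L$; one then sets $V(L)=X$ with $uv\in E(L)\iff u\sim\tau(v)$, and conversely the deck transformation of $L\w K_2$ is such a $\tau$). By Proposition 1.9 of \cite{mafkpr} the polyhedral Cartesian products are exactly the stacked prisms $C_n\q P_m$ and the prisms $H\q K_2$ with $H$ outerplanar and Hamiltonian, so the problem becomes: which of these admit such a $\tau$. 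For existence I would display $\tau$ explicitly. On $C_{2k}\q P_m$, with cycle indices in $\mathbb{Z}_{2k}$ and path levels $1,\dots,m$, take $\tau(i,j)=(i+k,j)$ when $k$ is odd and $\tau(i,j)=(i+k,m+1-j)$ when $k$ is even; a parity computation on the colouring $(i+j)\bmod 2$ shows $\tau$ reverses colours exactly when $k$ is odd, or $k$ and $m$ are both even, i.e. for $C_{4n+2}\q P_m$ and $C_{4n}\q P_{2m}$, and since $k\ge 2$ the image is always a non-neighbour. On $H\q K_2$, with $H$ a bipartite $2\ell$-gon plus diagonals, take the antipodal map $\alpha(u_i)=u_{i+\ell}$ composed with the identity when $\ell$ is odd and with the $K_2$-flip when $\ell$ is even; here $\alpha$ is an automorphism of $H$ precisely under the invariance condition $u_iu_j\in E(H)\iff u_{i+\ell}u_{j+\ell}\in E(H)$, while non-adjacency of the $\tau$-images amounts to $u_iu_{i+\ell}\notin E(H)$ (automatic from bipartiteness when $\ell$ is even) --- exactly the two displayed conditions.

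The converse --- that no other members of these families are double covers --- is where I expect the genuine difficulty, since one must exclude every fixed-point-free, colour-reversing, loop-free involution, not merely the geometric ones. My plan is to exploit $3$-connectivity: by Whitney's unique-embedding theorem $\mathrm{Aut}(\calP)$ coincides with the combinatorial symmetry group of the polytope, which for a stacked prism $C_n\q P_m$ with $n\neq 4$ and $m\ge 2$ is $D_n\times\mathbb{Z}_2$, and for $H\q K_2$ is the group of diagonal-preserving symmetries of $H$ times the $K_2$-flip. I would then run through the order-two elements and show that the reflection-type involutions always map some vertex to an adjacent one (a loop in $L$, so no simple factorisation), leaving only the antipodal involutions above; these reverse colours exactly under the stated parity/diagonal conditions, and the union of the two families then reproduces the list in the statement. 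The main obstacle is therefore twofold: controlling the reflection-type automorphisms uniformly across the families, and separately handling the exceptional highly symmetric case $n=4$ (the graphs $C_4\q P_m$), where additional automorphisms mixing the square faces with the side faces arise and must be shown not to yield a double cover when $m$ is odd.
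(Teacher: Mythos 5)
Your proposal is correct in strategy but takes a genuinely different route from the paper on both halves. For the first assertion the paper never invokes prime factorisation: it combines the classification of polyhedral Cartesian products \cite[Proposition 1.9]{mafkpr} with Cartesian cancellation \cite[Theorem 6.21]{haimkl}, reducing everything to the single equation $C_n\q P_m\simeq H\q K_2$, which Proposition \ref{le:CC} solves by a face-adjacency count (in $H\q K_2$ a quadrangular face over an edge of an $h$-gon, $h\neq 4$, meets two $h$-gons, whereas in $C_n\q P_m$ with $m\geq 3$ every quadrangle meets at least three quadrangles). Your Sabidussi--Vizing argument, with the non-planarity of $Q_4$ and of $P_3\q P_3\q K_2$, the claw exclusion via the $K_{2,3}$ minor, and the cycle exclusion (all of which check out), is sound and buys something extra: it proves ``at most two ways'' for every \emph{connected} planar graph, whereas the paper's argument as written covers the polyhedral case. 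For the second assertion the paper works by hand: explicit constructions for the positive cases (essentially your antipodal involutions, presented in factor form as in \eqref{eq:opp} and Figure \ref{fig:qq}), a bespoke path-and-parity argument on the degree-$3$ vertices to exclude $C_{4N}\q P_{2M+1}$ (Figure \ref{fig:imp}), and, for $H\q K_2$, a restriction of the Kronecker pairing to the spanning prism $C_{2\ell}\q K_2$ (Proposition \ref{le:JH}). Your route through the standard involution criterion for Kronecker covers (fixed-point-free, colour-swapping, neighbour-avoiding $\tau$, as in \cite{krpi19}) is more systematic and makes the parity conditions in the statement transparent.

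The converse you defer does go through along your lines, and the two obstacles you flag are smaller than you fear. For $m\geq 3$ the degree-$3$ vertices of $C_n\q P_m$ are exactly the two bases, so even for $n=4$ no automorphism mixes base faces with side faces, and $\mathrm{Aut}(C_4\q P_m)\simeq D_4\times\mathbb{Z}_2$; the extra symmetries you worry about occur only on the cube $m=2$, which has $m$ even and is harmless. The involution check is then routine: without the path flip, the only involutory rotation is by $n/2=2N$, which preserves the colour $(i+j)\bmod 2$, while a fixed-point-free reflection of $C_{4N}$ sends some vertex to a cycle-neighbour (a loop in $L$); with the flip and $m$ odd, the flip preserves the parity of $j$ and fixes the middle level, so one needs a colour-reversing, fixed-point-free involution $\sigma$ of $C_{4N}$, and odd rotations are not involutions while odd-axis reflections again send a middle-level vertex to a neighbour. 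One genuine correction: your description of $\mathrm{Aut}(H\q K_2)$ as the diagonal-preserving symmetries of $H$ times the $K_2$-flip fails exactly when $H$ is not prime, i.e.\ when $H$ is a ladder $F_{2m}$ (the only nontrivial outerplanar Cartesian products), but then $H\q K_2$ is a stacked cube, already handled in the other family; for all remaining $H$ the Cartesian product automorphism structure theorem in \cite{haimkl} yields the group directly, with no need for Whitney's theorem.
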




Theorem \ref{thm:dou} will be proven in Sections \ref{sec:cc}, \ref{sec:ck1}, and \ref{sec:ck2}. The other way to express the stacked cube as Cartesian product is $F_{2m}\q K_2$, where $F_{2m}$ is the ladder graph. The cube itself is a Cartesian product in only one way, since $C_{4}\simeq F_4$ and $P_2\simeq K_2$. Other details on these constructions will follow in Sections \ref{sec:cc}, \ref{sec:ck1}, and \ref{sec:ck2}. Theorem \ref{thm:dou} is illustrated in Figures \ref{fig:cc}, \ref{fig:qq}, \ref{fig:ppone}, and \ref{fig:pptwo}.

Combining Theorems \ref{thm:JL} and \ref{thm:dou} 
we deduce the following.

\begin{cor}
\label{cor:tri}
A polyhedral graph is expressible as Kronecker and/or Cartesian products in at most three distinct ways. Those expressible in three ways are exactly the stacked cubes with even index (other than the cube itself)
\[C_{4}\q P_{2m}\simeq F_{4m}\q K_{2}\simeq J\w K_2, \quad m\geq 2,\]
where $J$ is the non-planar graph obtained from the stacked cube 
\[C_4\q P_m, \quad C_4=[u_1,u_2,u_3,u_4], \quad P_m=v_1,v_2,\dots,v_m\]
by adding the two diagonals $(u_1,v_1)(u_3,v_1)$ and $(u_2,v_1)(u_4,v_1)$.
\end{cor}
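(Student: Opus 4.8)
The plan is to prove Corollary \ref{cor:tri} by assembling the three preceding structural results: Theorem \ref{thm:JL} (Kronecker cancellation), Theorem \ref{thm:dou} (characterisation of double Cartesian products and of simultaneous Cartesian/Kronecker products), and the summary fact from \cite{mafkpr} that a polyhedral Kronecker product has one factor equal to $K_2$. First I would count the distinct product expressions available to a given polyhedron $\calP$. By \cite[Proposition 1.1]{mafkpr}, every Kronecker-product expression of $\calP$ has the form $J\w K_2$, and by Theorem \ref{thm:JL} the factor $J$ is unique up to isomorphism; hence $\calP$ admits \emph{at most one} Kronecker expression. By Theorem \ref{thm:dou}, $\calP$ admits \emph{at most two} Cartesian expressions. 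Therefore $\calP$ can be written as a Kronecker and/or Cartesian product in at most $1+2=3$ distinct ways, which establishes the first sentence of the corollary.

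Next I would determine precisely when all three expressions are realised simultaneously, i.e.\ when $\calP$ is a Cartesian product in two ways \emph{and} also a Kronecker product. The two-way Cartesian condition, by Theorem \ref{thm:dou}, forces $\calP\simeq C_4\q P_m$ with $m\neq 2$, the second expression being $F_{2m}\q K_2$ via $C_4\q P_m\simeq F_{2m}\q K_2$ (using $F_{2m}\simeq P_m\q K_2$ and associativity/commutativity of $\q$). I then intersect this family with the list of polyhedra that are simultaneously Cartesian and Kronecker products, also given in Theorem \ref{thm:dou}. Among the stacked prisms $C_{4n+2}\q P_m$ and $C_{4n}\q P_{2m}$ appearing there, the stacked cubes $C_4\q P_m$ arise exactly when $n=1$ in the second family, which requires the $P$-index to be even; that is, the stacked cubes that are Kronecker products are precisely $C_4\q P_{2m}$. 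Combining with $m\neq 2$ (to exclude the cube, which is Cartesian in only one way since $C_4\simeq F_4$ and $P_2\simeq K_2$) yields $C_4\q P_{2m}$ with $m\geq 2$ as the full list of three-way examples.

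Finally I would exhibit the explicit Kronecker factor $J$ claimed in the statement and verify $J\w K_2\simeq C_4\q P_{2m}$. Here I would invoke Theorem \ref{thm:ab}: starting from two copies of $J'=C_4\q P_m$ and adding the cross edges prescribed by the added diagonals $(u_1,v_1)(u_3,v_1)$ and $(u_2,v_1)(u_4,v_1)$, the construction of Theorem \ref{thm:ab} reproduces $C_4\q P_{2m}$ by identifying the two copies as the two halves of the longer stacked prism and checking that the cross edges glue the end squares into the correct bipartite adjacency. I would also confirm the non-planarity of $J$: $J$ is $C_4\q P_m$ with two crossing diagonals added to an end square, the same phenomenon noted in the Introduction for the cube-with-diagonals producing $C_4\q P_4$, so the pattern is consistent and $J$ is non-planar exactly as asserted.

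The main obstacle I anticipate is the last step—verifying that the specific non-planar $J$ really satisfies Definition \ref{cond1} and that its Kronecker cover is isomorphic to $C_4\q P_{2m}$, rather than merely to \emph{some} $C_4\q P_{2m'}$. This requires a careful bookkeeping of vertex labels across the two copies of $J'$ and a matching of the added cross edges to the connecting squares of the doubled prism; the index-doubling (from $P_m$ to $P_{2m}$) must be tracked precisely, and one must confirm the two added diagonals are positioned so that $J'+a_ib_i$ is non-bipartite for each $i$ while $J'$ itself stays bipartite, as Definition \ref{cond1} demands. The at-most-three counting and the intersection of families are routine once Theorems \ref{thm:JL} and \ref{thm:dou} are in hand; it is this explicit realisation that carries the real content.
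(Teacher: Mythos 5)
Your proposal is correct and takes essentially the same approach as the paper, which deduces the corollary by exactly your counting: Theorem \ref{thm:JL} gives at most one Kronecker expression, Theorem \ref{thm:dou} gives at most two Cartesian expressions, and intersecting the two-way Cartesian family (stacked cubes) with the simultaneous Cartesian--Kronecker family yields $C_4\q P_{2m}$, $m\geq 2$. The final step you flag as ``the real content'' --- verifying that the specific non-planar $J$ satisfies Definition \ref{cond1} and that $J\w K_2\simeq C_4\q P_{2m}$ with the correct index doubling --- is already carried out inside the proof of Proposition \ref{prop:stacked} (the construction \eqref{eq:opp} with $N=1$, $M=m$, illustrated in Figure \ref{fig:qqtwo}, together with the non-planarity remark there), so it needs only to be cited rather than redone.
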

The case $m=2$ of Corollary \ref{cor:tri} is depicted in Figures \ref{fig:qqtwo} and \ref{fig:ppone}.


\subsection{Further considerations, and an open problem}
We end the introduction with a few extra considerations on questions related to \eqref{q1}, \eqref{q2}, \eqref{q3}, and \eqref{q4}.

From Theorem \ref{thm:dou} we deduce the following. The only $3$-polytopes satisfying both \eqref{q1} and \eqref{q3} are the stacked cubes $C_4\q P_m$, $m\geq 2$. Moreover, the only $3$-polytopes satisfying both \eqref{q2} and \eqref{q3} are the $2n$-gonal prisms, $n\geq 2$. This is consistent with the cube being the only $4$-face regular, $3$-vertex regular polyhedron.

\paragraph{An open problem.}
Question \eqref{q4} asks to minimise the vertices of degree $3$ among polyhedral Kronecker products. We consider the analogue of this question for faces. Let $\calP$ be a $(p,q)$ polyhedron with $r$ faces, that is a Kronecker product. On one hand, recalling that $\calP$ is bipartite, by the handshaking lemma we have
\[2q=4r_4+6r_6+8r_8+\dots+2Mr_{2M}\geq 4r_4+6(r-r_4)=6r-2r_4,\]
where $r_i$ counts the quantity of $i$-gonal faces in $\calP$, and $M\geq 2$. Hence by Euler's formula,
\[r_4\geq 3r-q=3(2-p+q)-q=6-3p+2q.\]
On the other hand, by the handshaking lemma and the $3$-connectivity of $\calP$, we have $2q\geq 3p$, with equality if and only if $\calP$ is cubic. Altogether we obtain
\[r_4\geq 6,\]
meaning that any polyhedral Kronecker product has at least six $4$-gonal faces. This is the analogue for faces of the result that any polyhedral Kronecker product has at least eight vertices of valency $3$ \cite[proof of Proposition 1.2]{mafkpr}.
\\
It would be interesting to have a characterisation and/or construction of the extremal class of polyhedral Kronecker products with exactly six $4$-gonal faces. The cube and hexagonal prism are examples. Another example is given in Figure \ref{fig:3r1}, right. For any solution $\calP$, equality holds in all the above inequalities, hence any solution is a cubic polyhedron ($2q=3p$), and its faces are all quadrangular or hexagonal ($r_6=r-r_4=r-6$). Equivalently, if $\calP$ is a solution, then $\calP^*$ is a maximal planar graph of degree sequence $6^{r-6},4^6$.

Note that for polyhedral Kronecker products, we have now considered vertices of degree $3$ (the smallest possible) and their maximisation \eqref{q2} and minimisation \eqref{q4}. We have also considered faces of valency $4$ (the smallest possible) and their maximisation \eqref{q1} and minimisation (the open problem above). 
We now consider the analogous questions for the Cartesian product. They are relatively straightforward to answer. Nevertheless, it is interesting to compare the respective analogous results for Kronecker and Cartesian products, highlighting similarities and differences.

A graph $\calP$ is a $4$-face-regular polyhedral Cartesian product (analogue of \eqref{q1} for the Cartesian product) if and only if $\calP\simeq H\q K_2$, where $H$ is a quadrangulated polygon (i.e., to construct $H$ one starts with an even-sided polygon $C_{2\ell}$, $\ell\geq 2$, and then one adds diagonals in such a way that each region of $H$ is a quadrangle, save possibly $C_{2\ell}$). This follows readily from \cite[Proposition 1.9]{mafkpr}. A special case are the stacked cubes. 
We observe that, analogously to Kronecker products, if a polyhedral Cartesian product is $t$-face regular, then $t=4$. This is because a planar Cartesian product of non-empty graphs always contains a $4$-gonal region.

The prisms are exactly the class of vertex-regular, polyhedral Cartesian products (analogue of \eqref{q2} for the Cartesian product). To see this, we start by checking that both of the two types of polyhedral Cartesian products contain vertices of degree $3$ (similarly to polyhedral Kronecker products), as follows. For the stacked prisms this is obvious. For the graphs $H\q K_2$ where $H$ is outerplanar and Hamiltonian, recall that $H$ is simply a polygon possibly with some added diagonals. Therefore, $H$ contains at least two vertices of degree $2$, thus $H\q K_2$ contains at least four vertices of degree $3$. Hence if a polyhedral Cartesian product is regular, then it is cubic. In the case of stacked prisms, the cubic ones are the prisms. If $H\q K_2$ is cubic, then $H$ is just a polygon, so that the product is again a prism.

As for polyhedral Cartesian products with fewest vertices of degree $3$ (analogue of \eqref{q4} for the Cartesian product), these are exactly the graphs $H\q K_2$ where $H$ 
has exactly two vertices of degree $2$. The corresponding products have exactly four vertices of degree $3$. As opposed to this, the $m$-stacked $n$-gonal prism has $2n\geq 6$ vertices of degree $3$. Interestingly, for the Cartesian product \eqref{q4} is not a special case of \eqref{q1}.

The polyhedral Cartesian product with fewest $4$-gonal faces (analogue of the open problem above for the Cartesian product) is the triangular prism, that has three $4$-gons. Indeed, if $H$ is an $\ell$-gon with some added diagonals, then $H\q K_2$ has at least $\ell$ quadrangular faces. Recall that Cartesian products of non-empty graphs always contain $4$-cycles (in the analogous question for polyhedral Kronecker products, recall that they are bipartite hence they contain a $4$-gonal face).

\paragraph{Plan of the paper.}
The rest of this paper deals with the proofs of the results stated in Sections \ref{sec:res} and \ref{sec:res2}. Section \ref{sec:q} is dedicated to quadrangulated Kronecker products: in Section \ref{sec:quad} we will prove Theorems \ref{thm:quad} and \ref{thm:quadpl}, and in Section \ref{sec:quadpl} we will prove Theorem \ref{thm:3333}. In Section \ref{sec:cub} we will instead focus on the vertex-regular cubic case, and prove Theorems \ref{thm:cubpl} and \ref{thm:cub}. In Section \ref{sec:jl} we will prove the Main Theorem \ref{thm:JL} on Kronecker cancellation. Sections \ref{sec:cc}, \ref{sec:ck1}, and \ref{sec:ck2} are dedicated to proving Theorem \ref{thm:dou} on simultaneous products.

\section{Quadrangulated polyhedral Kronecker products}
\label{sec:q}
\subsection{Proof of Theorems \ref{thm:quad} and \ref{thm:quadpl}}
\label{sec:quad}
If $\calP$ is a face-regular polyhedron and a Kronecker product, then it is $4$-face-regular, i.e.~a quadrangulation of the sphere. This is because $\calP$ is bipartite.

\begin{proof}[Proof of Theorem \ref{thm:quad}]
The reader may refer to Figure \ref{fig:q} throughout this proof. The conditions of Definition \ref{cond1} hold by Theorem \ref{thm:ab}. According to \cite[Proof of Lemma 3.7]{mafkpr}, $\calP$ is obtained from two copies of $J'$ by adding the edges $(a_i,x)(b_i,y)$, and $(a_i,y)(b_i,x)$, for $1\leq i\leq m$, hence in particular if $J'$ contains a non-quadrangular region other than $\calR$, then $\calP$ is not a quadrangulation.

It remains to show the condition on the vertex labelling for $\calR$. We begin by defining $v_1:=a_1$ and the integers $s_1,r_2,s_2,\dots,r_m,s_m$ as in \eqref{eq:risi}. From Theorem \ref{thm:ab}, we know that the $a_i$'s and $b_i$'s lie on $\calR$ either in the order $a_1,a_2,\dots,a_m,b_m,b_{m-1},\dots,b_1$ or in the order $a_1,a_2,\dots,a_m,b_1,b_2,\dots,b_m$. We can exclude the former case, otherwise $a_1x,b_1x,a_1y,b_1y$ would all lie on the same face in $\calP$, hence this face would not be quadrangular because it would contain other vertices to not contradict the definition of Kronecker product.

Next, the vertices
\[a_1x,b_1y,b_2y',a_2x'\]
where either $x'=x$ and $y'=y$, or vice versa, lie on the same face by construction which is quadrangular by hypothesis. Therefore, the distance between $a_1$ and $a_2$ along $\calR$ is one of $0,1,2$, and the distance between $b_1$ and $b_2$ is $2,1,0$ respectively. The same argument of course holds with the indices $i,i+1$ in place of $1,2$, for $1\leq i\leq m-1$, and for the vertices
\[a_iy,b_ix,b_{i+1}x',a_{i+1}y'.\]
In other words, \eqref{eq:rsrel} holds for each $1\leq i\leq m-1$.

Similarly, the vertices
\[a_mx,b_my,b_1x',a_1y'\]
lie on a quadrangular face, and so do
\[a_my,b_mx,b_1y',a_1x'.\]
As these vertices are all distinct, in fact
\[[a_mx,b_my,b_1x,a_1y] \quad \text
{and} \quad [a_my,b_mx,b_1y,a_1x]\]
are faces of $\calP$. This implies $s_1=r_m+1$ and $s_m=2\ell$. Since $J'+a_1b_1$ is not bipartite, the distance between $a_1,b_1$ along $\calR$ is even, thus the index $r_m$ is even.

On the other hand, thanks to the above considerations, it is immediate to see that if the conditions on $J$ of this theorem are all satisfied, then the product $\calP$ is indeed a $3$-connected quadrangulation of the sphere.
\end{proof}


\begin{ex}
\label{ex:m2}
If $m=2$, then by Theorem \ref{thm:ab} the vertices $a_1,a_2,b_1,b_2$ are distinct. It follows that $a_1,a_2$ and $b_1,b_2$ are adjacent (distance $1$), thus $\calR=[a_1,a_2,b_1,b_2]$ is a $4$-gon, hence $J'$ is a 
quadrangulation of the sphere. Moreover, by Theorem \ref{thm:ab}, the only potentially possible $2$-cuts in $J'$ are $a_1,b_1$ and $a_2,b_2$. On the other hand, to not violate the $3$-connectivity of the product $\calP$, these cannot be $2$-cuts, unless $J'$ is simply the square (i.e.~$J$ is the tetrahedron and $\calP$ the cube). Apart from this special case, $J'$ is $3$-connected.
\end{ex}


\begin{proof}[Proof of Theorem \ref{thm:quadpl}]
For the case where $J$ is the tetrahedron the statement of the present theorem clearly holds, thus for the remainder of this proof let us exclude this case. If $m=2$, then as in Example \ref{ex:m2} $J'$ is a $3$-connected quadrangulation, and
\[\calR=[a_1,a_2,b_1,b_2].\]
Since
\[J=J'+a_1b_1+a_2b_2\]
and $J$ is planar, we deduce that one of the edges $a_1b_1$, $a_2b_2$ is internal to $\calR$, and the other external, say $a_1b_1$ is internal (refer to Figure \ref{fig:qpl1}). As $\calR$ is a face of $J'$, the edges of $J'$ not on $\calR$ are either all internal to $\calR$ or all external, w.l.o.g. the former holds. Therefore, we have the triangular faces
\[R_2:=[a_2,b_2,a_1] \quad \text{and} \quad S_2:=[a_2,b_2,b_1]\]
in $J$. Eliminating $a_1b_1$, $a_2b_2$ from $J$ leaves the quadrangulation $J'$, hence in $J$ the edge $a_1b_1$ belongs to two triangular faces $R_1$ and $S_1$, say. Altogether, $J$ has exactly four odd faces, and these are all triangles. Now $a_1$ belongs to $R_1,S_1,R_2$, but not to $S_2$, and moreover $S_2$ has non-empty intersection with each of $R_1,S_1,R_2$, as in Figure \ref{fig:qpl1}. Thereby, $J$ satisfies Condition \ref{eq:c3} of Theorem \ref{thm:0123}. The statement of the present theorem is verified in this case.

\begin{figure}[ht]
\centering
\begin{subfigure}{0.33\textwidth}
\centering
\includegraphics[width=3.25cm]{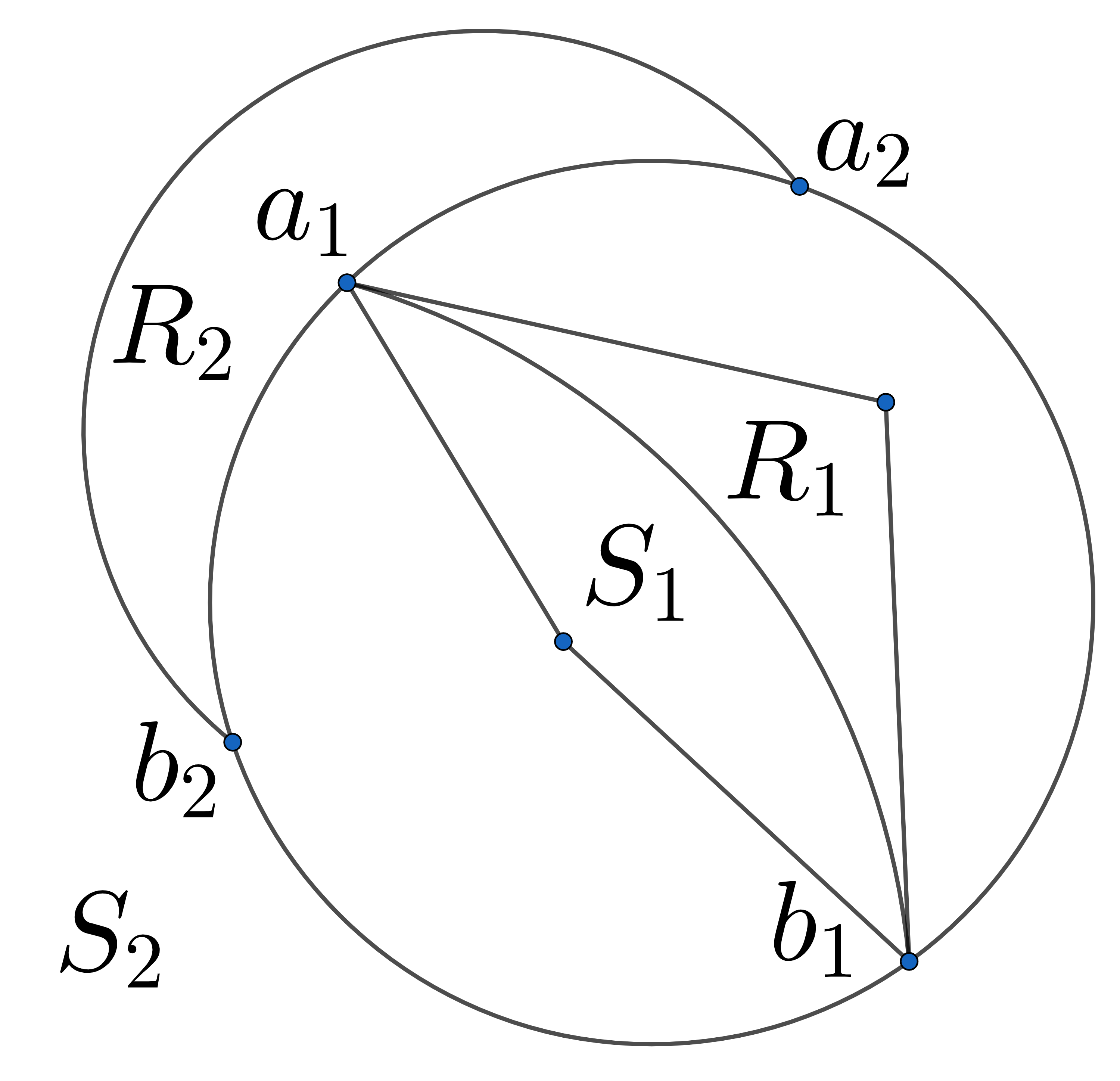}
\caption{The case $m=2$. Only a subgraph of $J$ is depicted.}
\label{fig:qpl1}
\end{subfigure}
    \hfill
\begin{subfigure}{0.65\textwidth}
\centering
\includegraphics[width=5.5cm]{qpl2.png}
\caption{An illustration of $J$ when $m\geq 3$. Here $m=6$ and $i=3$.}
\label{fig:qpl2}
\end{subfigure}
\caption{Theorem \ref{thm:quadpl}.}
\label{fig:qpl}
\end{figure}

Now let $m\geq 3$. As seen in the proof of Theorem \ref{thm:quad}, the vertices
\[a_1,a_2,\dots,a_m,b_1,b_2,\dots,b_m\]
appear around $\calR$ in this order. We deduce that there does not exists $j$ such that $a_1,a_j,a_m$ are all distinct, and $b_1,b_j,b_m$ are all distinct, otherwise $J$ would contain a copy of $K_{3,3}$ and would hence be non-planar.
Thereby, w.l.o.g. we can assume that
\begin{equation}
\exists \ i\in\{1,2,\dots,m-1\} \ : \ a_1=a_2=\dots=a_i\neq a_{i+1}=a_{i+2}=\dots=a_m.
\end{equation}

According to Theorem \ref{thm:quad}, the distance between $a_1,a_{m}$ along $\calR$ is one of $0,1,2$. As $a_1\neq a_m$, it is either $1$ or $2$. Still by Theorem \ref{thm:quad}, $a_m,b_1$ are adjacent, and the distance between $a_1,b_1$ is even, hence the distance between $a_1,a_m$ along $\cal{R}$ is $1$. Again by Theorem \ref{thm:quad}, the distance along $\calR$ between $b_j,b_{j+1}$ is $1$ for $j=i$, and $2$ otherwise. We deduce that
\begin{equation}
\label{eq:cR}
\calR=[a_1,a_m,b_1,w_1,b_2,\dots,w_{i-1},b_i,b_{i+1},w_i,b_{i+2},w_{i+1},\dots,w_{m-2},b_m],
\end{equation}
as in Figure \ref{fig:qpl2}.

Now similarly to the case $m=2$, w.l.o.g. in $J$ the edges $a_1b_1$, ..., $a_1b_i$ are drawn internally to $\calR$, and $a_{m}b_{i+1}$, ..., $a_mb_m$ externally. The edges of $J'$ not on $\calR$ are either all internal or all external to $J'$. Let us assume for the moment that they are internal. By Theorem \ref{thm:quad}, all regions of $J'$ other that $\calR$ are quadrangular. It follows that, for $1\leq j\leq i$, $J$ has two triangular regions $R_j,S_j$ containing $a_1,b_j$, and the only other odd regions of $J$ are $R_{i+1}:=[a_1,a_m,b_m]$ and the $2i+1$-gon
\[S_{i+1}:=[a_m,b_1,w_1,b_2,\dots,w_{m-i},b_i,b_{i+1}].\]
It follows that $a_1$ belongs to all odd regions of $J$ save $S_{i+1}$, and $S_{i+1}$ has non-empty intersection with all other odd regions of $J$. Thereby, Condition \ref{eq:c3} of Theorem \ref{thm:0123} holds for $J$. If instead the edges of $J'$ not on $\calR$ are  all external to $J'$, the above of course still holds, but with $m-i$ in place of $i$. We have thus verified the statement of the present theorem, with $\ell\in\{i,m-i\}$.

To check that there are infinitely many such graphs $J$, it suffices to consider the solutions of the type depicted in Figure \ref{fig:qpl2} for different values of $m,i$.
\end{proof}

\subsection{Proof of Theorem \ref{thm:3333}}
\label{sec:quadpl}
\begin{proof}[{Proof of Theorem \ref{thm:3333}}]
Recall that $J$ satisfies Definition \ref{cond1}, and that
\[J'=J-a_1b_1-a_2b_2-\dots-a_mb_m, \quad m\geq 2.\]
The assumptions on $J$ of the present theorem imply those of Theorem \ref{thm:quadpl}. Inspecting the proof of said theorem, we record that the region $\calR$ of Definition \ref{cond1} may be written as in \eqref{eq:cR},
\[
\calR=[a_1,a_m,b_1,w_1,b_2,\dots,w_{i-1},b_i,b_{i+1},w_i,b_{i+2},w_{i+1},\dots,w_{m-2},b_m]
\]
for some $1\leq i\leq m-1$. In $J$, the vertex $a_1$ is adjacent to the distinct vertices $a_m,b_m,b_1,b_2,\dots,b_i$, while $a_m$ is adjacent to the distinct $a_1,b_1,b_{i+1},b_{i+2},\dots,b_m$. Since the maximum valency of $J$ is $4$ by hypothesis, we deduce that $m\leq i+2\leq 4$. There are thus few possibilities, that we can split up into three cases: either $m=2$ and $i=1$, or $m=4$ and $i=2$, or $m=3$ and $i\in\{1,2\}$. 

If $m=2$ and $i=1$, then $\calR=[a_1,a_2,b_1,b_2]$, as in Figure \ref{fig:qpl1}. Since we assumed $J\not \simeq K_4$, there is another vertex $v$. W.l.o.g., $v$ is adjacent to $a_1$. Appealing to the proof of Theorem \ref{thm:quadpl}, $J'=J-a_1b_1-a_2b_2$ is a quadrangulation of the sphere, hence $v$ is also adjacent to $b_1$. Now $v$ cannot be adjacent to either $a_2$ or $b_2$, else $J'$ would have odd regions. On the other hand, the valency of $v$ is at least $3$, hence there must be other vertices in $J$. However, we cannot introduce any other vertices without violating either the planarity, or the $3$-connectivity, or the maximum valency of $J$, thus we can discard this case.

If $m=4$ and $i=2$, then
\[
\calR=[a_1,a_4,b_1,w_1,b_2,b_3,w_2,b_{4}].
\]
The neighbourhood of $a_1$ in $J$ is $\{a_4,b_4,b_1,b_2\}$. Moreover, each region of $J'$ except $\calR$ is a quadrangle. It follows that there exists a vertex $v$ not on $\calR$ adjacent to $a_4,b_4$ such  that $[a_1,a_4,v,b_4]$ is a $4$-cycle in $J$. Then $a_4$ is adjacent to at least $a_1,b_1,b_3,b_4,v$, i.e. $\deg_J(a_4)\geq 5$, contradiction.

It remains to consider $m=3$ and $i\in\{1,2\}$. If $i=1$, then we have
\[
\calR=[a_1,a_3,b_1,b_2,w_1,b_3].
\]
As for the case $m=3$ and $i=2$, we have instead
\[\calR=[a_1,a_3,b_1,w_1,b_2,b_3].\]
In the latter expression, relabelling $\alpha_1=a_3$, $\alpha_3=a_1$, $\beta_1=b_3$, $\beta_2=b_2$, $\beta_3=b_1$, we obtain
\[\calR=[\alpha_1,\alpha_3,\beta_1,\beta_2,w_1,\beta_3].\]
That means we are back to the case $m=3$ and $i=1$, with the $\alpha$'s in place of the $a$'s and the $\beta$'s in place of the $b$'s: thus we can just consider that case.

Similarly to the observation made for the previous case (where $m=4$), $a_3$ already has the maximum possible number of four adjacencies, hence there exists a vertex $c_1$ not on $\calR$ adjacent to $a_1,b_1$. Note that if $a_1b_1$ is drawn internally to $\calR$ while $a_3b_2$ and $a_3b_3$ external, then by planarity $c_1$ is internal to $\calR$. We now introduce the labels
\begin{center}
\begin{tabular}{ccc}
&$c_2=b_1$,&$c_3=b_2$,
\\$c_4=w_1$,&$c_5=b_3$,&$c_6=a_1$.
\end{tabular}
\end{center}

In our sketch for $J$ so far, $[c_1,c_2,\dots,c_6]$ is a region, with vertex degrees so far $2,4,3,2,3,4$ respectively, and any other vertices must be inside this region, as in Figure \ref{fig:381} (this is the starting graph already shown in Figure \ref{fig:start}).

\begin{figure}[ht]
\centering
\begin{subfigure}{0.47\textwidth}
\centering
\includegraphics[width=4.25cm]{381.png}
\caption{Resulting graph after adding $c_1$.}
\label{fig:381}
\end{subfigure}
    \hfill
\begin{subfigure}{0.47\textwidth}
\centering
\includegraphics[width=4.25cm]{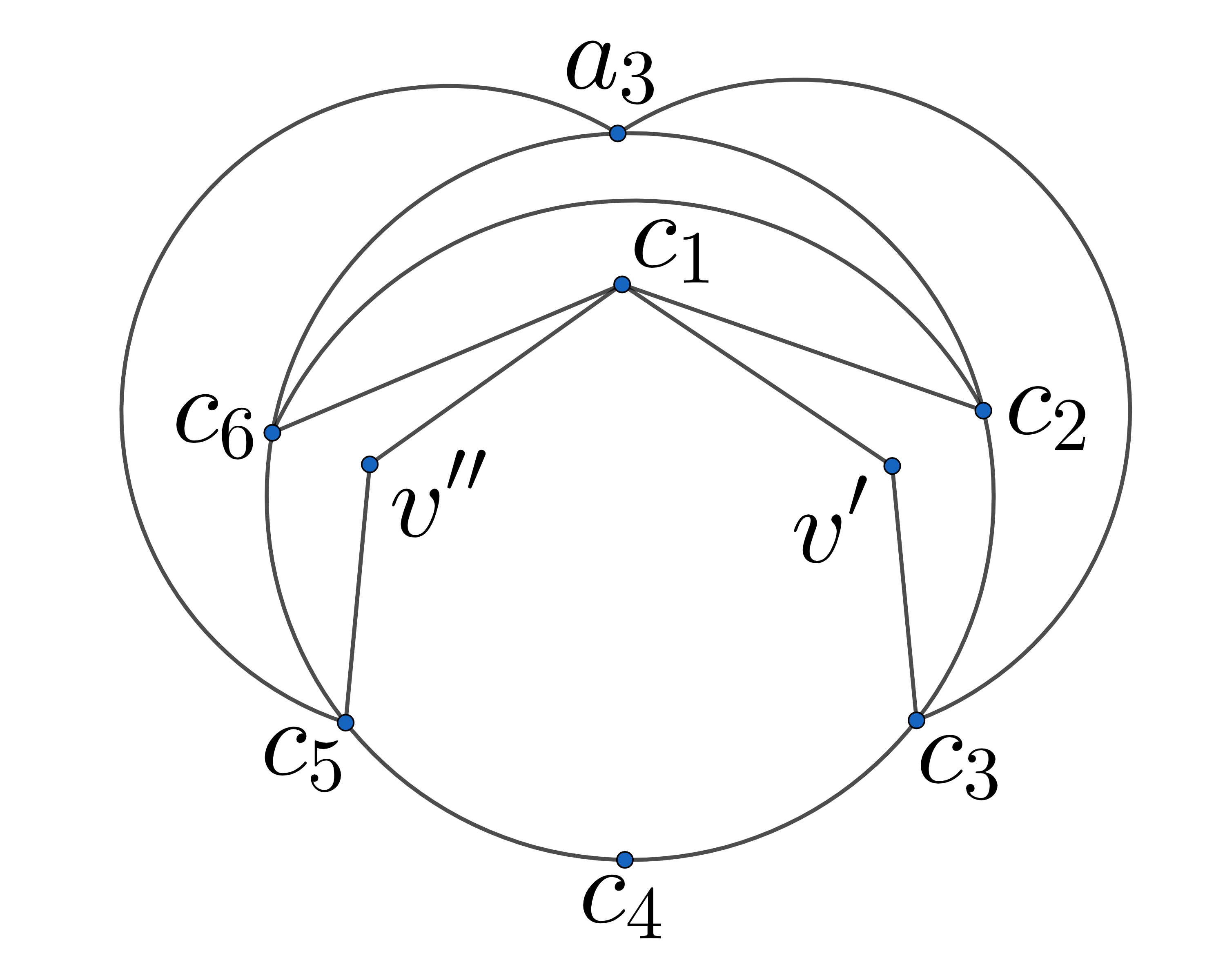}
\caption{The graph $J_0$.}
\label{fig:382}
\end{subfigure}
\caption{Starting the construction of $J$ in Theorem \ref{thm:3333}.}
\label{fig:3801}
\end{figure}

Reasoning as above, the vertex $c_2$ cannot have any further adjacencies. One possibility is that $c_1c_4\in E(J)$ (this is tantamount to applying the transformation in Figure \ref{fig:38f1}). No new vertices may then be introduced without contradicting one of the conditions on $J$: planarity, $3$-connectivity, sequence $4^{p_J-4},3^4$, every region of $J'$ save for $\calR$ is a $4$-gon. The resulting $J$, of order $7$, is the graph in Figure \ref{fig:ds1}.

If instead $c_1c_4\not\in E(J)$, then there exist new vertices $v'$ adjacent to $c_1,c_3$, and $v''$ adjacent to $c_1,c_5$. The resulting graph is shown in Figure \ref{fig:382}, and shall be denoted by $J_0$. Note that $v'\neq v''$, or one of the conditions on $J$ would be violated. 

With the same reasoning, there exist vertices $z_1,z_2,z_3$, such that
\[z_1v',z_1v'',z_2v',z_2c_4,z_3v'',z_3c_4\in E(J).\]
If $z_1=z_2=z_3$ (i.e.~we are implementing the transformation in Figure \ref{fig:38f2} to the starting graph in Figure \ref{fig:381}), then no new vertices may be introduced, and we have obtained a solution with $p_J=10$.

We cannot have $z_1=z_2\neq z_3$ (or permutations) without contradicting the conditions on $J$. This leaves the case where $z_1,z_2,z_3$ are all distinct. Continuing from the graph $J_0$ in Figure \ref{fig:382}, consider
\[\calT(1): J_0\to J_0(1)=J_0+z_1+z_1v'+z_1v''\]
(Figure \ref{fig:383}). This is tantamount to applying the transformation of Figure \ref{fig:38t1} to the starting graph in Figure \ref{fig:381}.

\begin{figure}[ht]
\centering
\begin{subfigure}{0.32\textwidth}
\centering
\includegraphics[width=4.25cm]{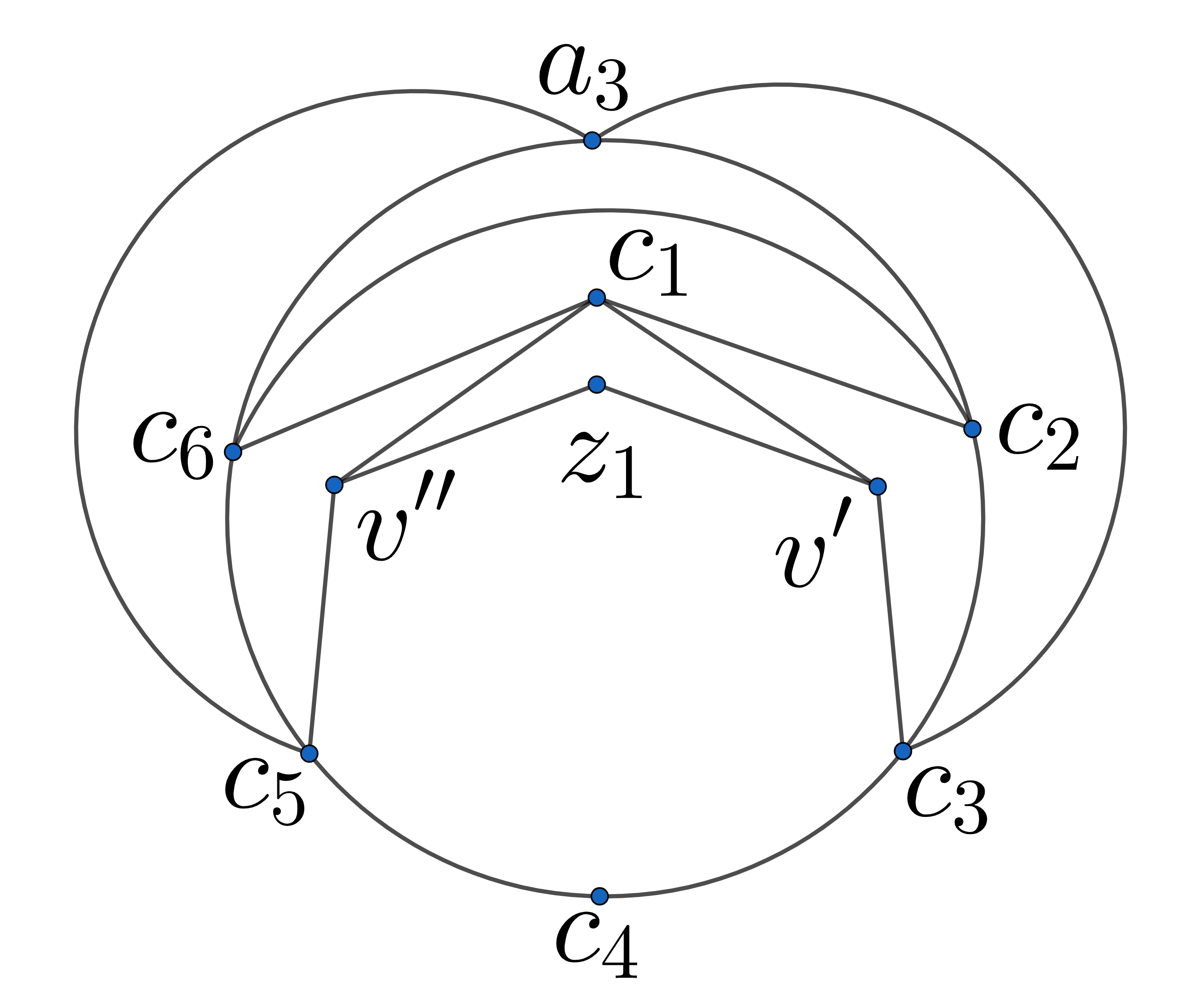}
\caption{Resulting graph after $\calT(1)$.}
\label{fig:383}
\end{subfigure}
    \hfill
\begin{subfigure}{0.32\textwidth}
\centering
\includegraphics[width=4.25cm]{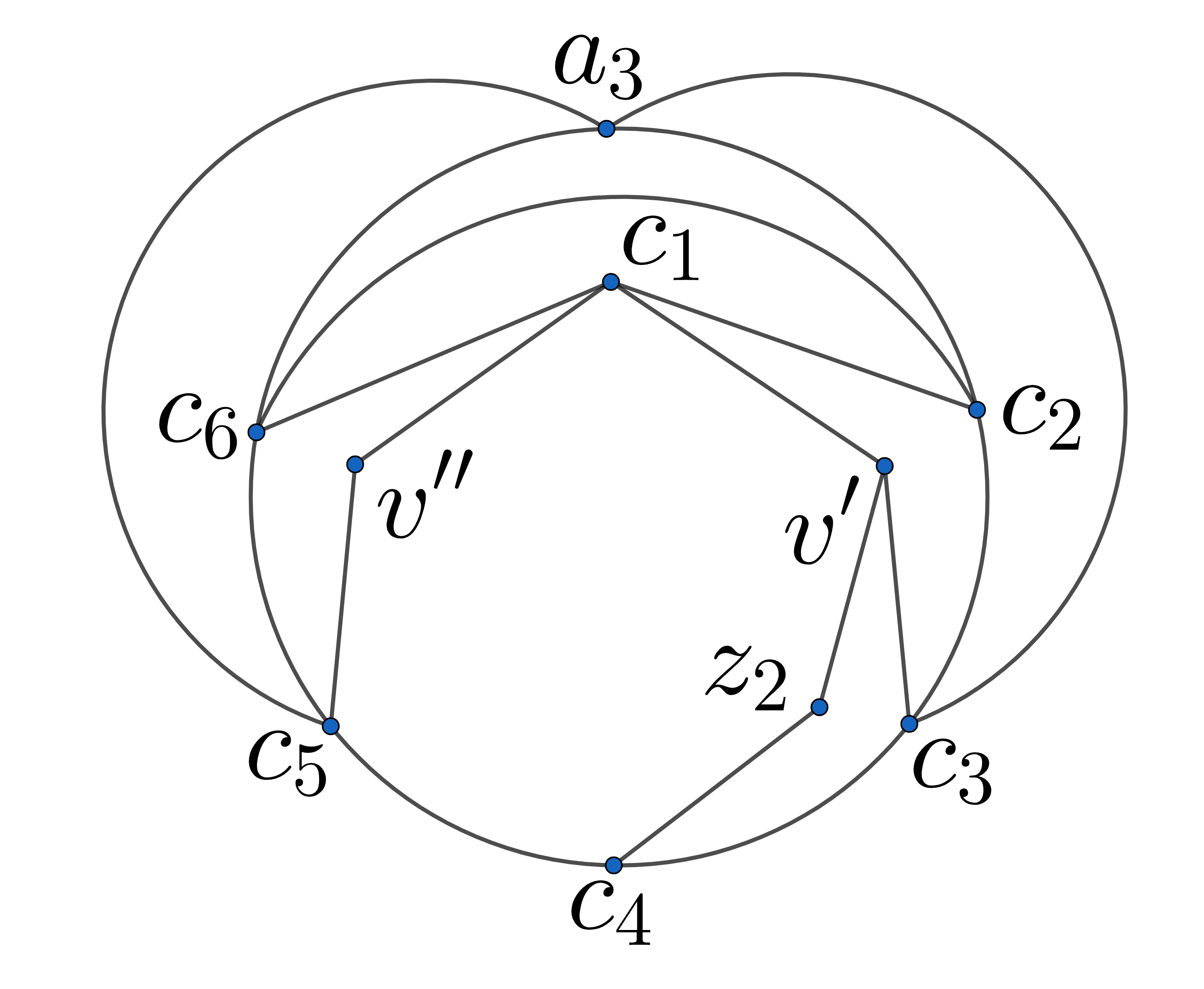}
\caption{Resulting graph after $\calT(2)$.}
\label{fig:384}
\end{subfigure}
    \hfill
\begin{subfigure}{0.32\textwidth}
\centering
\includegraphics[width=4.25cm]{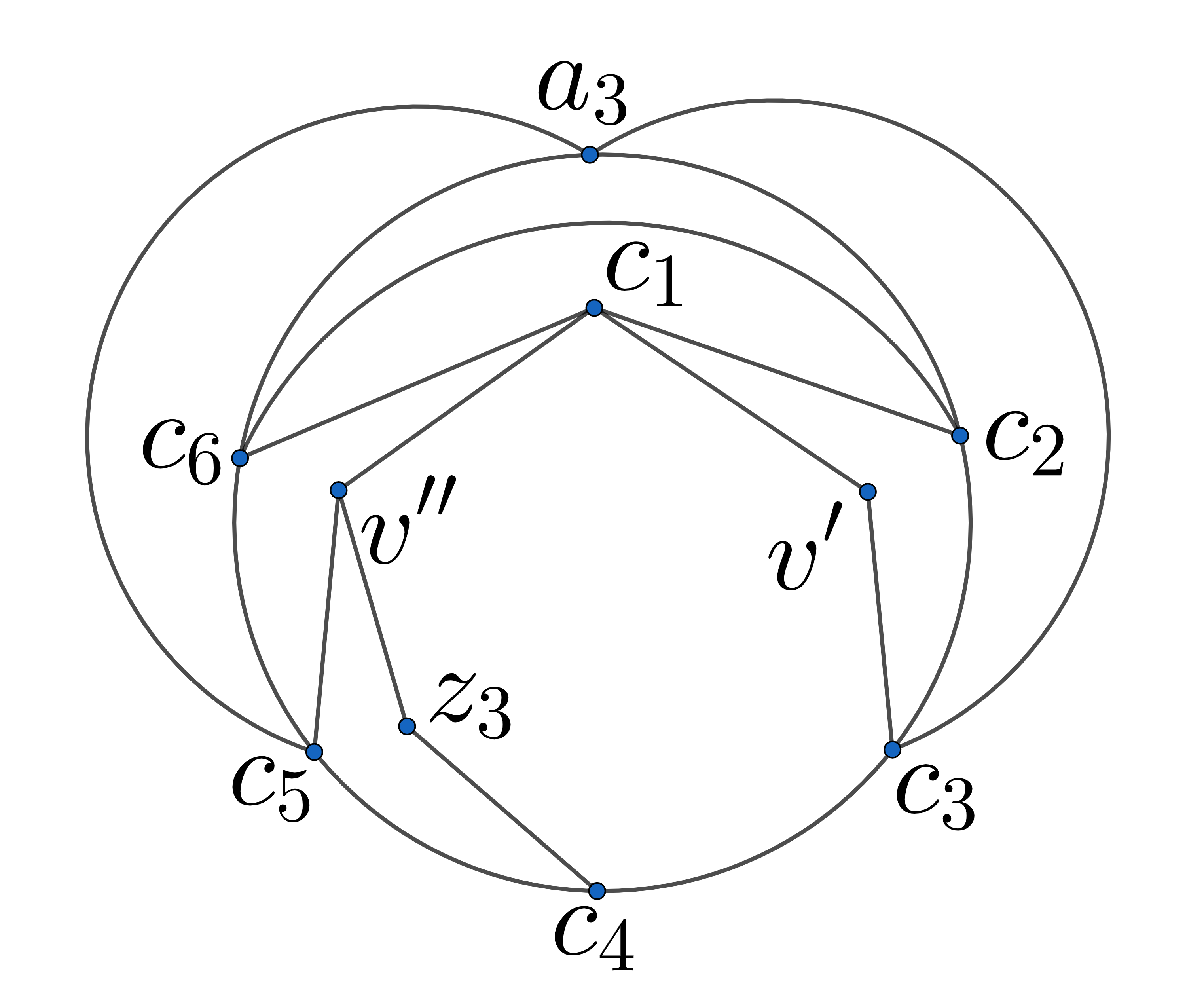}
\caption{Resulting graph after $\calT(3)$.}
\label{fig:385}
\end{subfigure}
\caption{Continuing the construction of $J$ in Theorem \ref{thm:3333}.}
\label{fig:3802}
\end{figure}
We now take the relabelling
\begin{center}
\begin{tabular}{ccc}
$d_1=c_4$,&$d_2=c_5$,&$d_3=v''$,
\\$d_4=z_1$,&$d_5=v'$,&$d_6=c_3$.
\end{tabular}
\end{center}

Another option is,
\[\calT(2): J_0\to J_0(2)=J_0+z_2+z_2v'+z_2c_4\]
(refer to Figure \ref{fig:384}). This is tantamount to applying the transformation of Figure \ref{fig:38t2} to the starting graph in Figure \ref{fig:381}. Here we take
\begin{center}
\begin{tabular}{ccc}
$d_1=v''$,&$d_2=c_1$,&$d_3=v'$,
\\$d_4=z_2$,&$d_5=c_4$,&$d_6=c_5$.
\end{tabular}
\end{center}
The final option is
\[\calT(3): J_0\to J_0(3)=J_0+z_3+z_3v''+z_3c_4\]
(refer to Figure \ref{fig:385}). This is tantamount to applying the mirror image of the transformation in Figure \ref{fig:38t2} to the starting graph in Figure \ref{fig:381}. Here we take the relabelling
\begin{center}
\begin{tabular}{ccc}
$d_1=v'$,&$d_2=c_3$,&$d_3=c_4$,
\\$d_4=z_3$,&$d_5=v''$,&$d_6=c_1$.
\end{tabular}
\end{center}

In any case, in our sketch so far, $[d_1,d_2,\dots,d_6]$ is a region, with vertex degrees so far $2,4,3,2,3,4$ respectively, and any new vertices are inside this region. Our graph now has $3$ additional vertices.

The above steps may be applied iteratively on the new region $[d_1,d_2,\dots,d_6]$, rather than $[c_1,c_2,\dots,c_6]$. At each step, either we add two new vertices as in Figure \ref{fig:3801}, followed by applying one of $\calT(1)$, $\calT(2)$, $\calT(3)$ (this is tantamount, respectively, to applying the transformations \ref{fig:38t1}, \ref{fig:38t2}, and the mirror image of \ref{fig:38t2}), or we complete the construction in one of the two ways described above (i.e., the transformations in Figure \ref{fig:38f}). This concludes the construction of all solutions $J$. To check that all of these graphs $J$ are in fact solutions, i.e.~that $J\w K_2$ is a polyhedron, we simply note that all of these graphs $J$ satisfy Condition \ref{eq:c3} of Theorem \ref{thm:0123}.

The other statements of the present theorem are now clear from the above construction. For instance, the four triangular faces of $J$ are always
\[[a_1,b_1,a_3], \quad [a_1,b_1,c_1], \quad [a_1,a_3,b_3], \quad [a_3,b_1,b_2].\]
The proof of Theorem \ref{thm:3333} is complete.
\end{proof}

\section{Cubic polyhedral Kronecker products}
\label{sec:cub}


We now turn to vertex-regular polyhedral Kronecker products. As mentioned in the introduction, since polyhedral Kronecker products have at least eight vertices of degree $3$, the vertex regular ones are necessarily cubic graphs. They may be also thought of as the other extreme to the graphs in Theorem \ref{thm:3333}, since here we maximise the number of vertices of degree $3$.

\begin{proof}[Proof of Theorem \ref{thm:cubpl}]

If $J$ is a $3$-polytope with more than four odd faces, then by Theorem \ref{thm:0123}, we are in the case \ref{eq:c3}, thus more than three odd faces share a vertex. Therefore, this vertex has degree larger than three, contradiction.
On the other hand, we readily construct solutions for each of the feasible types \ref{eq:c0}, \ref{eq:c1}, \ref{eq:c2}, \ref{eq:c3}, as in Figure \ref{fig:3reg}. To see that there are infinitely many solutions for each type, we apply the planar transformation in Figure \ref{fig:tr3r} to a quadrilateral face of any solution $J$. The resulting planar graph is still $3$-regular, the number of odd regions has not changed, and neither has the way they intersect. Thereby, this operation can be performed iteratively, and the resulting graphs still satisfy the respective Conditions \ref{eq:c0}, \ref{eq:c1}, \ref{eq:c2}, \ref{eq:c3} of Theorem \ref{thm:0123}.
\end{proof}
\begin{figure}[ht]
\centering
\begin{subfigure}{0.49\textwidth}
\centering
\includegraphics[width=2.5cm]{t0.png}
\hspace{1cm}
\includegraphics[width=2.75cm]{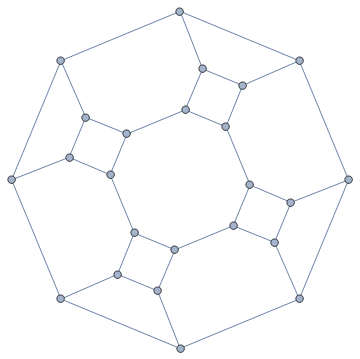}
\caption{A $3$-regular, planar graph of connectivity $2$ (from Figure \ref{fig:sh2c}) satisfying Condition \ref{eq:c0}, and its Kronecker product with $K_2$.}
\label{fig:3r0}
\end{subfigure}
    \hfill
\begin{subfigure}{0.49\textwidth}
\centering
\includegraphics[width=2.5cm]{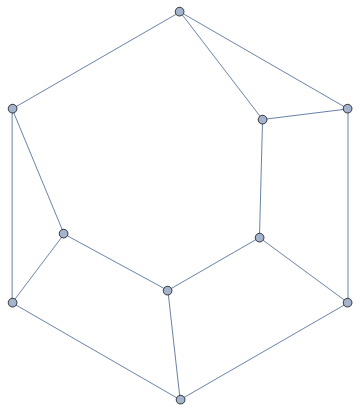}
\hspace{0.5cm}
\includegraphics[width=2.6cm]{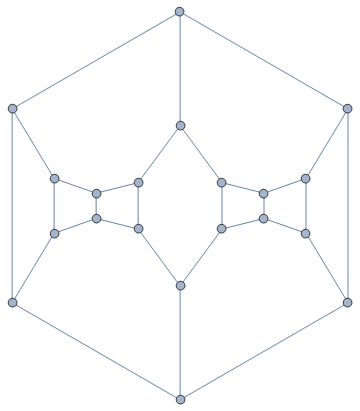}
\caption{A $3$-regular polyhedron satisfying Condition \ref{eq:c1}, and its Kronecker product with $K_2$.}
\label{fig:3r1}
\end{subfigure}
    \hfill
\begin{subfigure}{0.49\textwidth}
\centering
\includegraphics[width=2.5cm]{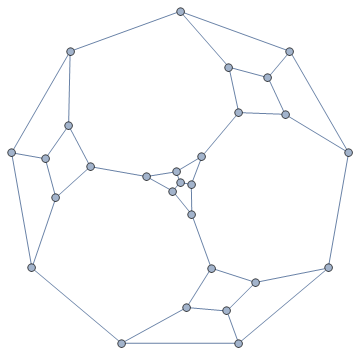}
\hspace{0.5cm}
\includegraphics[width=2.5cm]{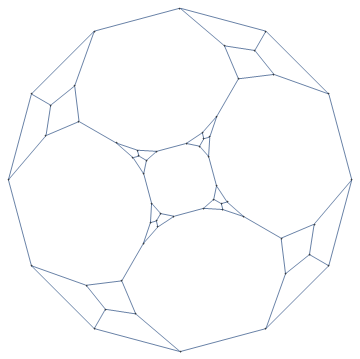}
\caption{A $3$-regular polyhedron satisfying Condition \ref{eq:c2}, and its Kronecker product with $K_2$.}
\label{fig:3r2}
\end{subfigure}
    \hfill
\begin{subfigure}{0.49\textwidth}
\centering
\includegraphics[width=2.5cm]{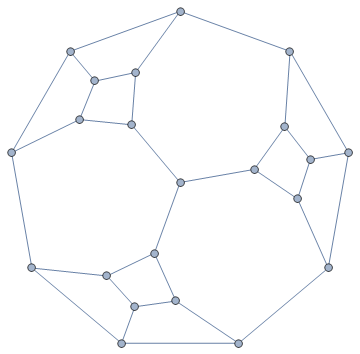}
\hspace{0.5cm}
\includegraphics[width=2.5cm]{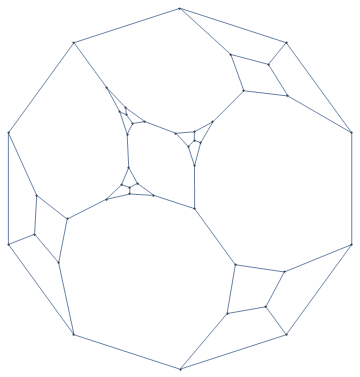}
\caption{A $3$-regular polyhedron satisfying Condition \ref{eq:c3}, with four odd faces, and its Kronecker product with $K_2$.}
\label{fig:3r3}
\end{subfigure}
\caption{Examples of $3$-regular, planar graphs $J$ such that $\calP=J\wedge K_2$ is a polyhedron.}
\label{fig:3reg}
\end{figure}

\begin{figure}[ht]
	\centering
		\includegraphics[width=2.75cm]{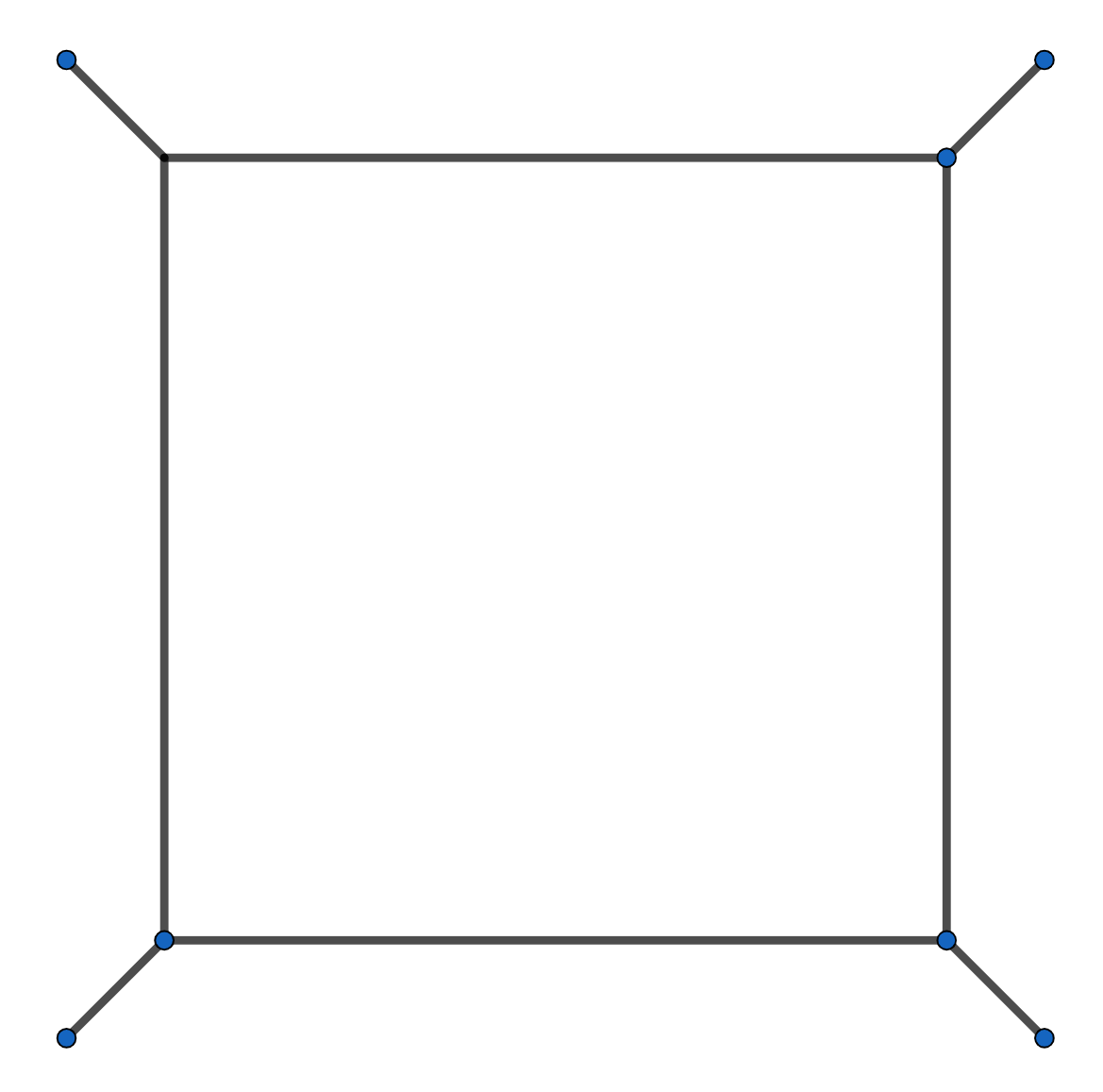}
  \hspace{1cm}
  \includegraphics[width=2.75cm]{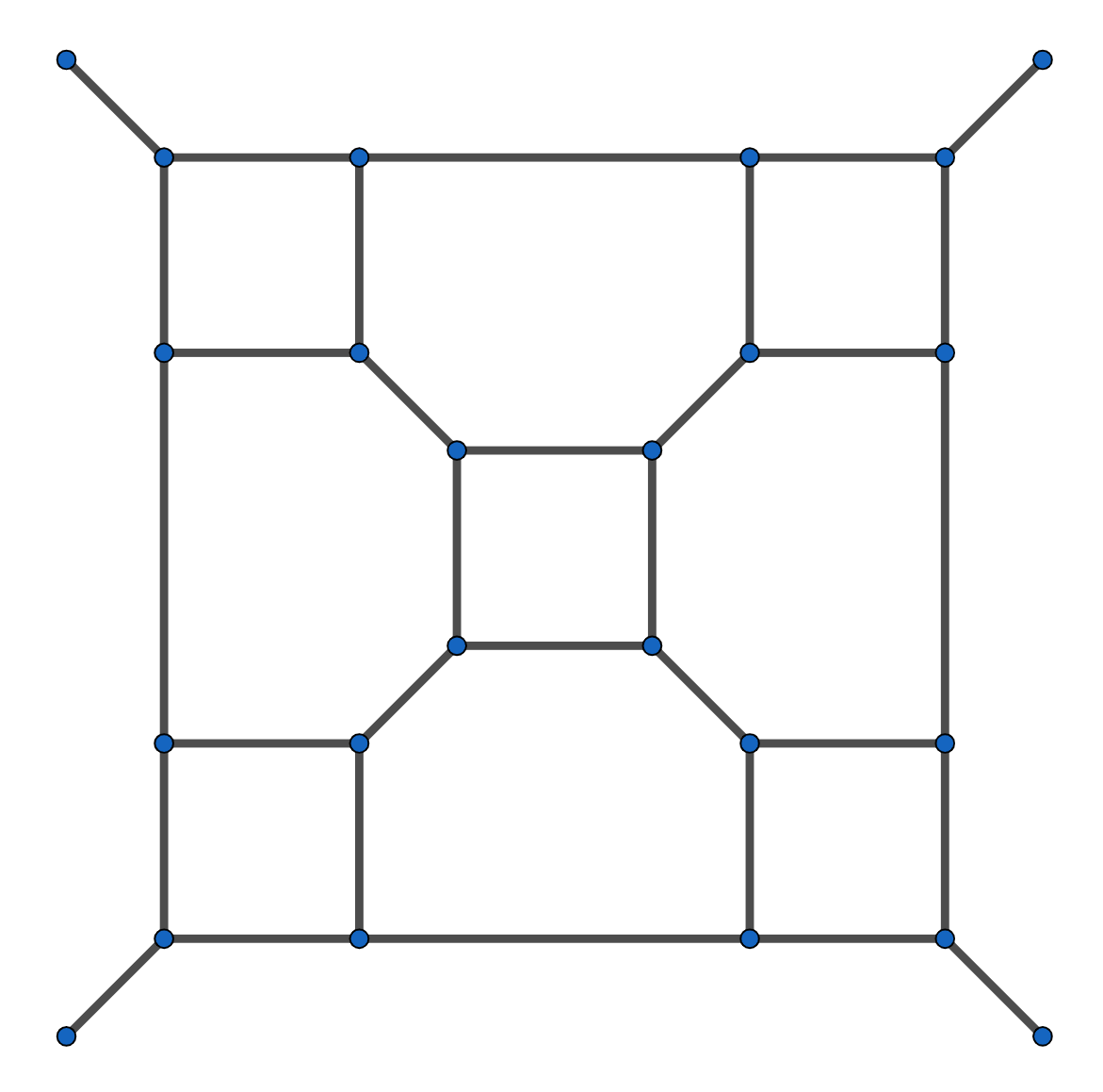}
	\caption{In a $3$-regular, planar graph containing a quadrilateral region (left), we transform this region into the graph on the right. The resulting planar graph is still $3$-regular, and the number of odd regions has not changed, and neither has the way they intersect.}
	\label{fig:tr3r}
\end{figure}

\begin{proof}[Proof of Theorem \ref{thm:cub}]
Let $\calP=J\wedge K_2$ be a cubic polyhedron. We inspect the construction of $\calP$ in Theorem \ref{thm:ab}. Since $J$ is cubic, then in particular the vertices $a_1,a_2,\dots,a_m,b_1,b_2,\dots,b_m$ of Definition \ref{cond1} all have degree $3$ in $J$. Since they have degree at least $2$ in the $2$-connected graph $J'$, it follows that to pass from $J$ to $J'$, for each $a_1,a_2,\dots,a_m,b_1,b_2,\dots,b_m$ we are deleting exactly one incident edge. It follows that these vertices are all distinct. Each has degree exactly $2$ in $J'$.

If we contract all of these vertices in $J'$, we obtain a cubic, planar graph $J''$. Since $J'$ is either $3$-connected or semi-hyper-$2$-connected, then so is $J''$. Note that $J''$ may be a multigraph, in case all vertices of a region of $J'$ save two are among $a_1,a_2,\dots,a_m,b_1,b_2,\dots,b_m$ (recall Figures \ref{fig:J2} and \ref{fig:J1}).

Call $\calR''$ the region of $J''$ that contains all the contracted edges. Since $J'$ is bipartite, and we have removed $2m$ vertices, the region $\calR''$ is even. If $J''$ contains any odd regions, then these were formed by removing an odd number of $a_1,a_2,\dots,a_m,b_1,b_2,\dots,b_m$, because $J'$ is bipartite. Further, each odd region of $J''$ is adjacent to $\calR''$.

Next, assume by contradiction that contracting the vertices results in only one contracted edge in $J''$. Then in $J'$ each of $a_1,a_2,\dots,a_m,b_1,b_2,\dots,b_m$ belongs to $\calR$ and to another region of $J'$, say $\calS$. Since $\calP$ is not a multigraph, then for all $1\leq i\leq m$, $a_ib_i\not\in E(J')$. In particular, $a_1b_1\not\in E(J')$, thus w.l.o.g.~$a_1,a_2,\dots,a_m,b_1,b_2,\dots,b_m$ are consecutive and in this order around $\calR$ and $\calS$. We may write
\[\calS=[\dots,v,a_1,a_2,\dots,a_m,b_1,b_2,\dots,b_m,w,\dots].
\]
Then $\{v,w\}$ is a $2$-cut in $J'$ lying on $\calR$ between $b_m$ and $a_1$, contradicting Definition \ref{cond1}.

Taking $J_2:=J''$, the conditions on $J_2$ are all satisfied. Note that $\calR_2=\calR''$ and $J_1=J'$.

On the other hand, starting with $J_2$ satisfying all the conditions in the statement of this theorem, one constructs $J_1$ as described in the statement. Then $J'=J_1$ satisfies all the conditions in Definition \ref{cond1}, hence $\calP$ is a polyhedron by Theorem \ref{thm:ab}. It is $3$-regular by construction.
\end{proof}

\section{Cancellation and simultaneous products}
\subsection[Simultaneous Kronecker product]{$J\w K_2\simeq L\w K_2$}
\label{sec:jl}
We now turn to graphs that may be expressed as Kronecker and/or Cartesian products in more than one way. Our main result shows that a polyhedron is a Kronecker product in at most one way. In other words, Kronecker product cancellation always holds when the product is planar and $3$-connected.

\begin{proof}[Proof of Theorem \ref{thm:JL}]
According to Definition \ref{cond1} and Theorem \ref{thm:ab}, $E(J)$ is partitioned into $E(J')$ and
\[\{a_1b_1,a_2b_2,\dots,a_mb_m\}.\]
The product $J\w K_2$ is obtained by adding the edges
\begin{equation}
\label{eq:edgesab}
(a_i,x)(b_i,y), \quad (a_i,y)(b_i,x), \quad 1\leq i\leq m,
\end{equation}
to two copies of $J'$ from Definition \ref{cond1}.

Since $J\w K_2\simeq L\w K_2$, the product is also obtained by adding certain edges
\begin{equation}
\label{eq:edgescd}
(c_i,x)(d_i,y), \quad (c_i,y)(d_i,x), \quad 1\leq i\leq n \quad (n\geq 2),
\end{equation}
to two copies of a graph $L'$ satisfying conditions analogous to $J'$ from Definition \ref{cond1}. In particular, $L'$ contains a region $\calS$ satisfying conditions analogous to $\calR$ from Definition \ref{cond1}, thus containing the vertices $c_1,c_2,\dots,c_n,d_1,d_2,\dots,d_n$. The product $J\w K_2$ thus contains two copies of the cycle $\calS$, and its vertices are partitioned in two halves: each half is inside (or on the boundary) of each copy of $\calS$.

We refer to Figure \ref{fig:JL}, drawn starting from two copies of $J'$. The only connections between the two copies of $J'$ are the edges \eqref{eq:edgesab}.
Assume by contradiction that $J\not\simeq L$: it surely means that $\calS$ does not coincide with $\calR$ thus there are non-trivial intersections between each $\calS$ and each $\calR$. Therefore, the boundary of each copy of $\calS$, since it is a cycle, contains a positive even number of edges from \eqref{eq:edgesab} from one copy of $J'$ to the other one. Let's say that one copy of $\calS$ contains the edge $(a_i,x)(b_i,y)$ for some $i$. This is illustrated in Figure \ref{fig:JL} for $i=1$.

Recall that by Theorem \ref{thm:ab}, the product $L\w K_2$ is obtained from two copies of $L'$ by adding the edges \eqref{eq:edgescd}. Therefore 
$(a_i,x)$, which coincides with $(c_j,x)$ for some $j$ w.r.t. $\calS$, is adjacent to some $(d_\alpha,y)$. Similarly, $(b_i,y)$ is adjacent to some $(d_\beta,x)$.

Further, we claim that $(d_\alpha,y)$
belongs to the same copy of $\calR$ as $(a_i,x)$. Otherwise if $(d_\alpha,y)$ were inside the same copy of $J'$ as $(a_i,x)$ or belonged to the other copy of $J'$ (interior or boundary), then the boundary of the copy of $\calS$ containing $(d_\alpha,y)$ should cross the boundary of one of the two copies of $\calR$, impossible by planarity. Likewise, $(d_\beta,x)$ belongs to the same copy of $\calR$ as $(b_i,y)$.

Since $(d_\alpha,y)$ and $(d_\beta,x)$ belong to the same copy of $\calS$, then by planarity these two vertices are in fact adjacent. 
Therefore we may set w.l.o.g.
\[c_j=a_i, \quad c_{j+1}=b_i, \quad d_j=a_{i+1}, \quad d_{j+1}=b_{i+1},\]
so that $J\w K_2$ contains the facial $4$-cycle
\[[a_ix,b_iy,b_{i+1}x,a_{i+1}y]=[c_jx,c_{j+1}y,d_{j+1}x,d_jy].\]
By definition of Kronecker product, we also have the facial $4$-cycle
\[[a_{i+1}x,b_{i+1}y,b_ix,a_iy]=[d_jx,d_{j+1}y,c_{j+1}x,c_jy].\]
In Figure \ref{fig:JL} the situation is illustrated for $i=j=1$.

Please note that the above reasoning holds for any number of pairs of edges from \eqref{eq:edgesab} that a copy of $\calS$ may contain: more complex examples are shown in Figure \ref{fig:JL_color}.

\begin{figure}[ht]
\centering
\includegraphics[width=10cm]{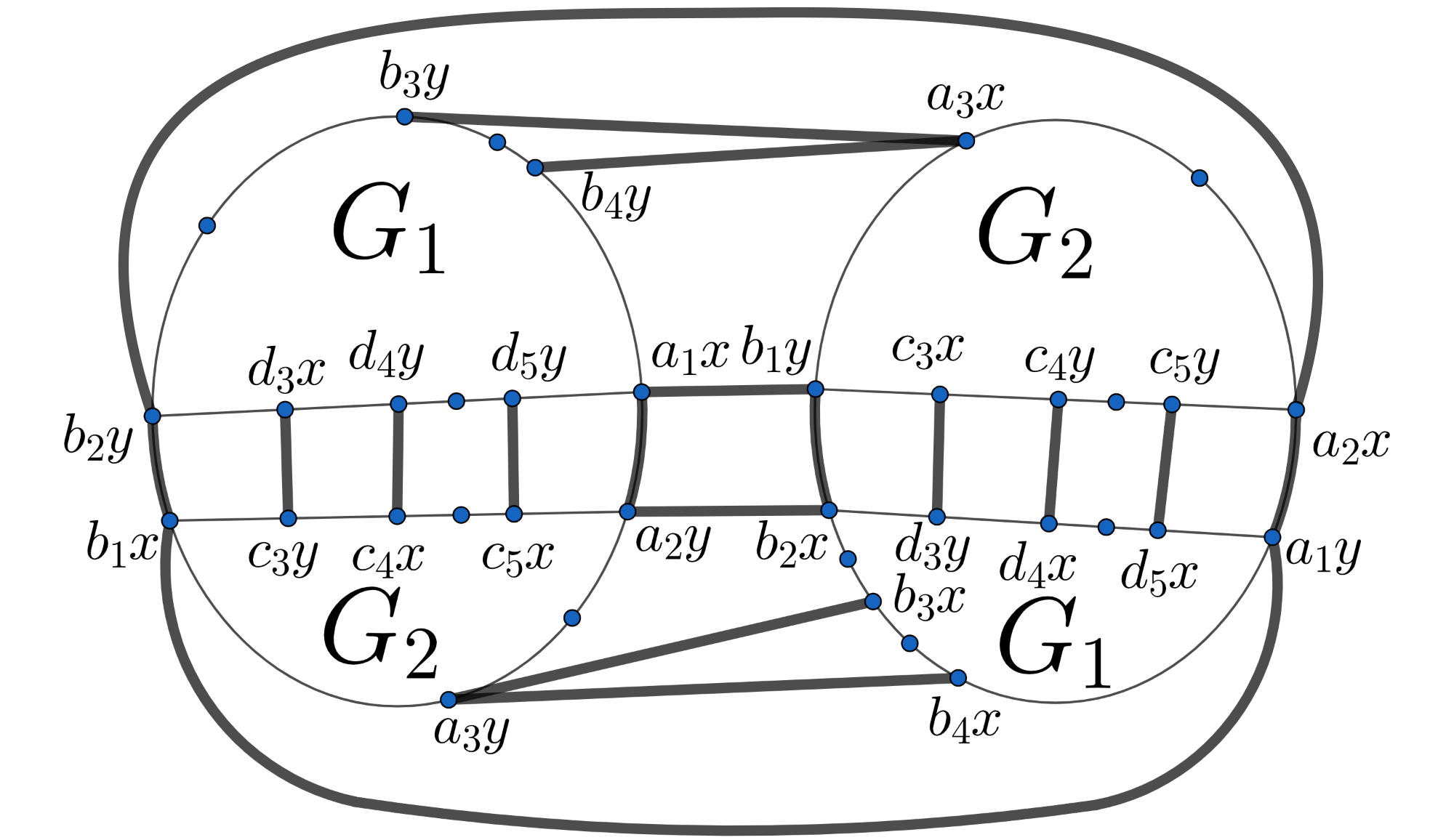}
\caption{Example sketch of a $3$-polytope $J\w K_2\simeq L\w K_2$. Here $i=j=1$, $m=4$, $n=5$, and $a_3=a_4$. Only a subgraph is drawn. Thicker lines are necessarily edges.}
\label{fig:JL}
\end{figure}

\begin{figure}[ht]
\centering
\includegraphics[width=15cm]{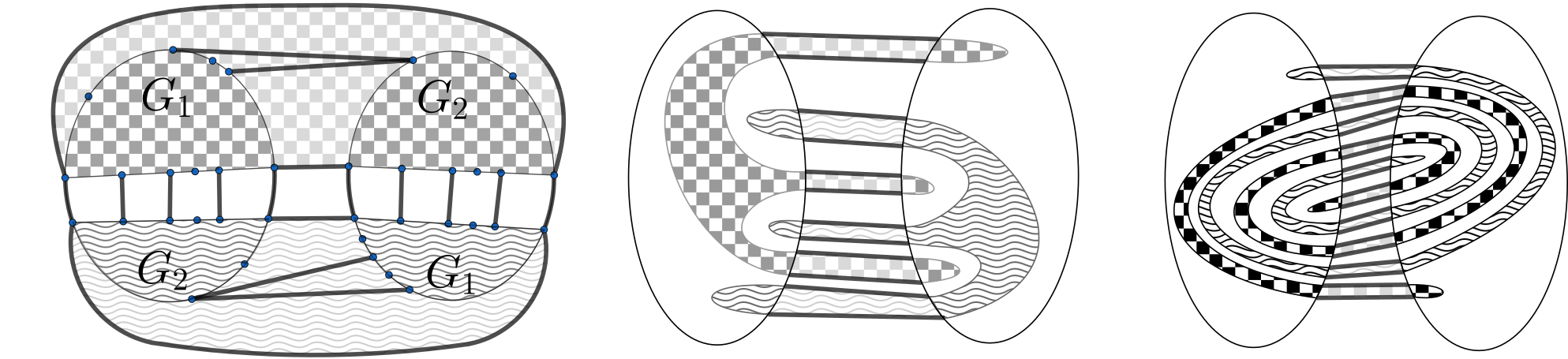}
\caption{Examples of sketches for a few $3$-polytopes $J\w K_2\simeq L\w K_2$ with a different number of connecting $4$-cycles. The first graph on the left corresponds to the one of Figure \ref{fig:JL}. The darker painted regions identify the copies of $G_1$ and $G_2$, i.e.~the parts of $J\w K_2\simeq L\w K_2$ that are inside a copy of $\calR$ and a copy of $\calS$. In each example, the chequered part on the left is one copy of $G_1$, the wavy part on the right is the other copy, the chequered part on the right is one copy of $G_2$, and the wavy part on the left is the other copy. The white quandrangles in the centre (outside of $\calR$ and $\calS$) are faces.}
\label{fig:JL_color}
\end{figure}

As a consequence, we have the following construction for $J\w K_2\simeq L\w K_2$: two copies of a certain planar graph $G_1$, two copies of a planar graph $G_2$, the edges
\[(a_k,x)(b_k,y), \quad (a_k,y)(b_k,x), \quad 1\leq k\leq m,\]
and the edges
\[(c_k,x)(d_k,y), \quad (c_k,y)(d_k,x), \quad 1\leq k\leq n.\]
The $(a_k,x)(b_k,y)$'s connect the first copy of $G_1$ to the first copy of $G_2$, and the second to the second, while the $(c_k,x)(d_k,y)$'s connect the first copy of $G_1$ to the second copy of $G_2$, and the second copy of $G_1$ to the first copy of $G_2$.

The graph $J$ then admits the following construction: a copy of $G_1$ (top left in Figure \ref{fig:JL}; in the labelling for $J$, we leave out the $x$'s and $y$'s of course), a copy of $G_2$ (bottom left), the edges
\[c_kd_k, \quad 1\leq k\leq n,\]
and the edges
\[a_kb_k, \quad 1\leq k\leq m.\]

On the other hand, $L$ admits the following construction: a copy of $G_1$ (top left in Figure \ref{fig:JL}),
a copy of $G_2$ (top right), 
the edges
\[a_kb_k, \quad 1\leq k\leq m,\]
and the edges
\[c_kd_k, \quad 1\leq k\leq n.\]
We have obtained $J\simeq L$. Note that $G_1$ and $G_2$ may have more than one connected component as in Figure \ref{fig:JL_color}, centre and right.
\end{proof}

\subsection[Simultaneous Cartesian product]{$C_n\q P_m\simeq H\q K_2$}
\label{sec:cc}
\paragraph{Strategy to prove Theorem \ref{thm:dou}.} The rest of this paper is dedicated to proving Theorem \ref{thm:dou}. As mentioned in the introduction, there are two classes of polyhedral Cartesian products, namely the stacked prisms $C_n\q P_m$ with $n\geq 3$ and $m\geq 2$, and the graphs $H\q K_2$ with $H$ outerplanar and Hamiltonian \cite[Proposition 1.9]{mafkpr}. Moreover, for non-empty graphs $A,B,C$, the isomorphism $A\q C\simeq B\q C$ implies $A\simeq B$ \cite[Theorem 6.21]{haimkl}. Therefore, a polyhedral graph is expressible as Cartesian product in at most two distinct ways, and to characterise those expressible in two ways, it suffices to find integers $n,m$ and graphs $H$ such that $C_n\q P_m\simeq H\q K_2$. We will do so in the present section.
\\
To find the polyhedra that are both Cartesian and Kronecker products, we will analyse $C_n\q P_m\simeq J\w K_2$ in Section \ref{sec:ck1} and $H\q K_2\simeq J\w K_2$ in Section \ref{sec:ck2}.

\begin{prop}
\label{le:CC}
Let $n\geq 3$ and $m\geq 1$. There exists a graph $H$ satisfying
\[C_n\q P_m\simeq H\q K_2\]
if and only if either $H\simeq C_n$, $n\geq 3$, and $m=2$ (the products are prisms), or $n=4$ and $H\simeq F_{2m}$, $m\geq 1$ is the ladder graph (the products are stacked cubes).
\end{prop}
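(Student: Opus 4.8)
The plan is to reduce both implications to the unique prime factorisation of connected graphs with respect to the Cartesian product (the Sabidussi--Vizing theorem, see \cite{haimkl}): every connected graph decomposes into Cartesian-prime factors, uniquely up to isomorphism and reordering of the factors. Since $C_n\q P_m$ is connected, so is $H\q K_2$, which forces $H$ to be connected, and both sides therefore admit such a factorisation that we may compare.

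For the \emph{if} direction I would simply exhibit the factorisations. If $m=2$, then $P_2\simeq K_2$, so $C_n\q P_2\simeq C_n\q K_2$ and $H\simeq C_n$ works (these are the prisms). If $n=4$, then $C_4\simeq K_2\q K_2$, whence
\[
C_4\q P_m\simeq K_2\q K_2\q P_m\simeq (P_m\q K_2)\q K_2\simeq F_{2m}\q K_2,
\]
using $F_{2m}\simeq P_m\q K_2$ together with the associativity and commutativity of $\q$; thus $H\simeq F_{2m}$ works (these are the stacked cubes).

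For the \emph{only if} direction, the key step is to pin down the prime factors of each side and match them. I would first record the elementary primality facts: $P_m$ is Cartesian-prime for every $m\geq 2$ (a tree contains no $4$-cycle, so it is not a non-trivial Cartesian product), while $C_n$ is Cartesian-prime for every $n\neq 4$, with the single exception $C_4\simeq K_2\q K_2$ (a $2$-regular connected product must have both factors $1$-regular and connected, hence both equal to $K_2$). Writing $H\q K_2$ in factored form, its multiset of prime factors is that of $H$ augmented by one copy of $K_2$. Comparing with $C_n\q P_m$ then splits into two cases. If $n\neq 4$, the left side has prime factors $\{C_n\}$ when $m=1$ and $\{C_n,P_m\}$ when $m\geq 2$; in either case $C_n\not\simeq K_2$, so the extra $K_2$ on the right can be matched only if some left factor equals $K_2$, which happens precisely when $P_m\simeq K_2$, i.e.\ $m=2$, and then $H\simeq C_n$ (in particular $m=1$ admits no solution here). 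If $n=4$, the left side factors as $\{K_2,K_2,P_m\}$ for $m\geq 2$ and as $\{K_2,K_2\}$ for $m=1$; cancelling the extra $K_2$ shows the prime factors of $H$ are $\{K_2,P_m\}$, i.e.\ $H\simeq P_m\q K_2\simeq F_{2m}$.

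I expect the only delicate point to be the bookkeeping around the exceptional identity $C_4\simeq K_2\q K_2$, which is exactly what makes the value $n=4$ behave differently from all other $n$, together with the boundary value $m=1$, where $P_1$ is the trivial Cartesian factor and the product degenerates to $C_n$. Both are absorbed into the same matching argument once the primality facts above are established, and no genuine obstacle remains, since unique factorisation carries out all the heavy lifting.
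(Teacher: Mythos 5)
Your proof is correct, but it takes a genuinely different route from the paper. You reduce everything to the Sabidussi--Vizing unique prime factorisation theorem for connected graphs under $\q$ (available in the cited book \cite{haimkl}), together with two elementary primality facts: $P_m$ is prime for $m\geq 2$ (no $4$-cycles in a tree, while any non-trivial product of connected graphs with at least one edge each contains $C_4$), and $C_n$ is prime for $n\neq 4$ (the degree argument forcing two $1$-regular connected factors, whence $C_4\simeq K_2\q K_2$ as the unique exception). The multiset bookkeeping you carry out is sound, including the boundary cases $m=1$ (where $F_2\simeq K_2$ makes $C_4\simeq F_2\q K_2$ consistent with the statement) and $m=2$ (where $P_2\simeq K_2$ yields $H\simeq C_n$). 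The paper instead argues geometrically: it disposes of $m=1$ via the $C_4$-subgraph observation and $m=2$ trivially, and for $m\geq 3$ it invokes its own structural result \cite[Proposition 1.9]{mafkpr} that a polyhedral product $H\q K_2$ has $H$ outerplanar and Hamiltonian, then counts face adjacencies in the unique polyhedral embedding: any non-quadrangular face of $H$ would produce a quadrangular face of $H\q K_2$ adjacent to exactly two quadrangles, whereas every quadrangular face of $C_n\q P_m$ with $m\geq 3$ is adjacent to at least three, forcing all faces to be quadrangles, i.e.\ $n=4$ and $H\simeq F_{2m}$. Your approach is shorter and uniform in $m$, needs no planarity or $3$-connectivity at all (so it actually proves the classification for arbitrary graphs $H$, not just in the polyhedral setting), at the cost of importing the non-trivial unique factorisation theorem; the paper's argument stays within the planar toolkit it develops throughout and only uses cancellation \cite[Theorem 6.21]{haimkl}, which is much lighter than full unique factorisation.
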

\begin{proof}
Let $m=1$. 
For each graph $H$ with at least one edge, its Cartesian product with $K_2$ gives birth to a graph containing a copy of $C_4$. Since the only cycle graph containing a copy of $C_4$ is $C_4$ itself, $n$ must be 4. $H$ can then be $K_2$. If $m=2$, we trivially take $H=C_n$ and the product is the $n$-gonal prism.

Now let $m\geq 3$. Here $C_n\q P_m$ is a $3$-polytope, so that $C_n\q P_m\simeq H\q K_2$ implies that $H\q K_2$ is a $3$-polytope, hence $H$ is outerplanar and Hamiltonian, i.e., $H$ is obtained from the polygon $C_\ell$ by adding some diagonals \cite[Proposition 1.9]{mafkpr}. Equivalently, $H\q K_2$ may be constructed starting from a prism of edges
\begin{equation}\label{eq:Mprism}\begin{aligned}
    \{&v_1v_2,v_2v_3,\dots,v_\ell v_1,\\
&w_1w_2,w_2w_3,\dots,w_\ell w_1,\\
&v_1w_1,v_2w_2,\dots,v_\ell w_\ell \}
\end{aligned}
\end{equation}
by adding certain corresponding diagonals $v_iv_j$ and $w_iw_j$ to the two bases.

\begin{figure}[ht]
\centering
\begin{subfigure}{0.64\textwidth}
\centering
\includegraphics[width=3.5cm]{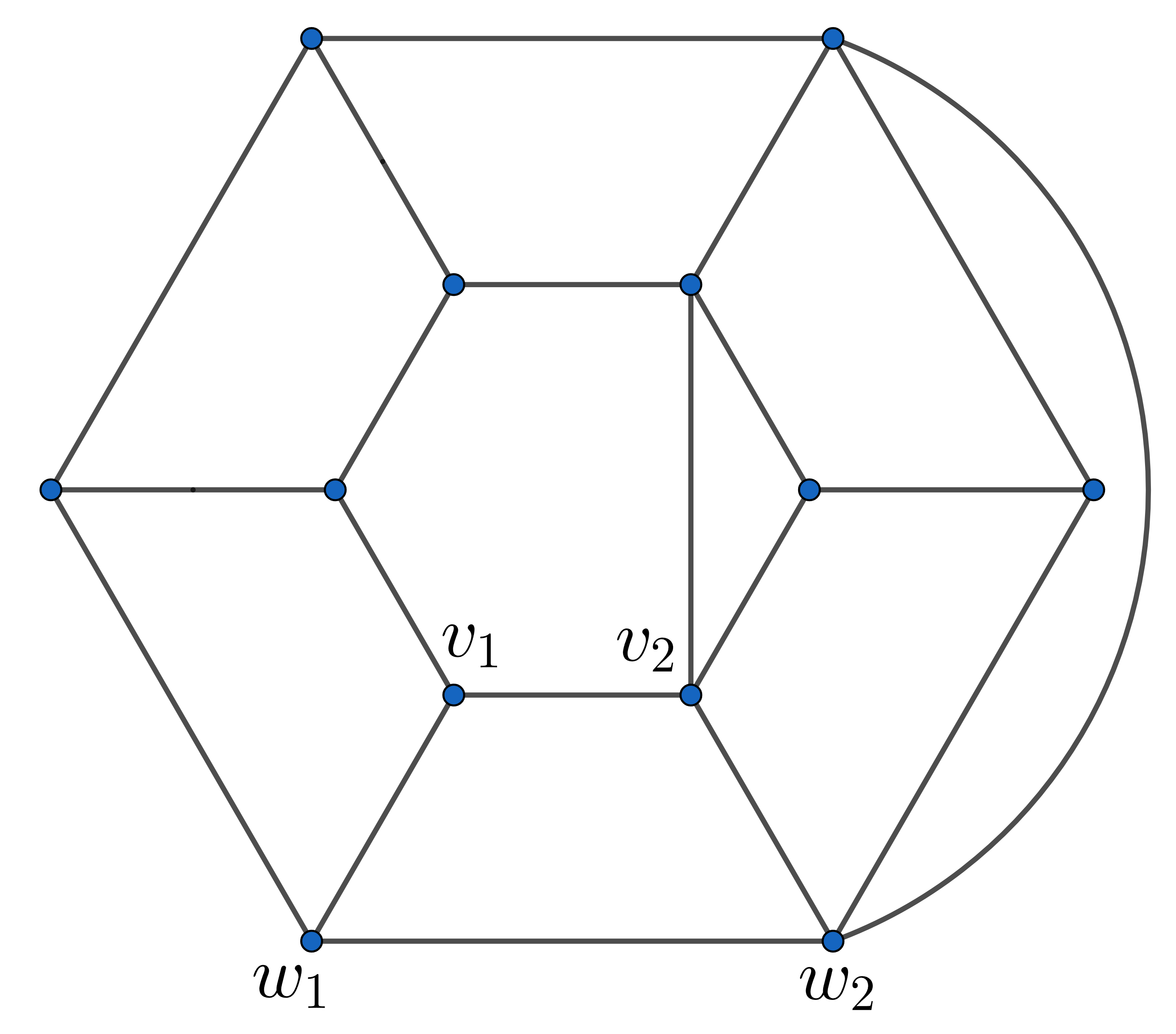}
\hspace{0.75cm}
\includegraphics[width=3.5cm]{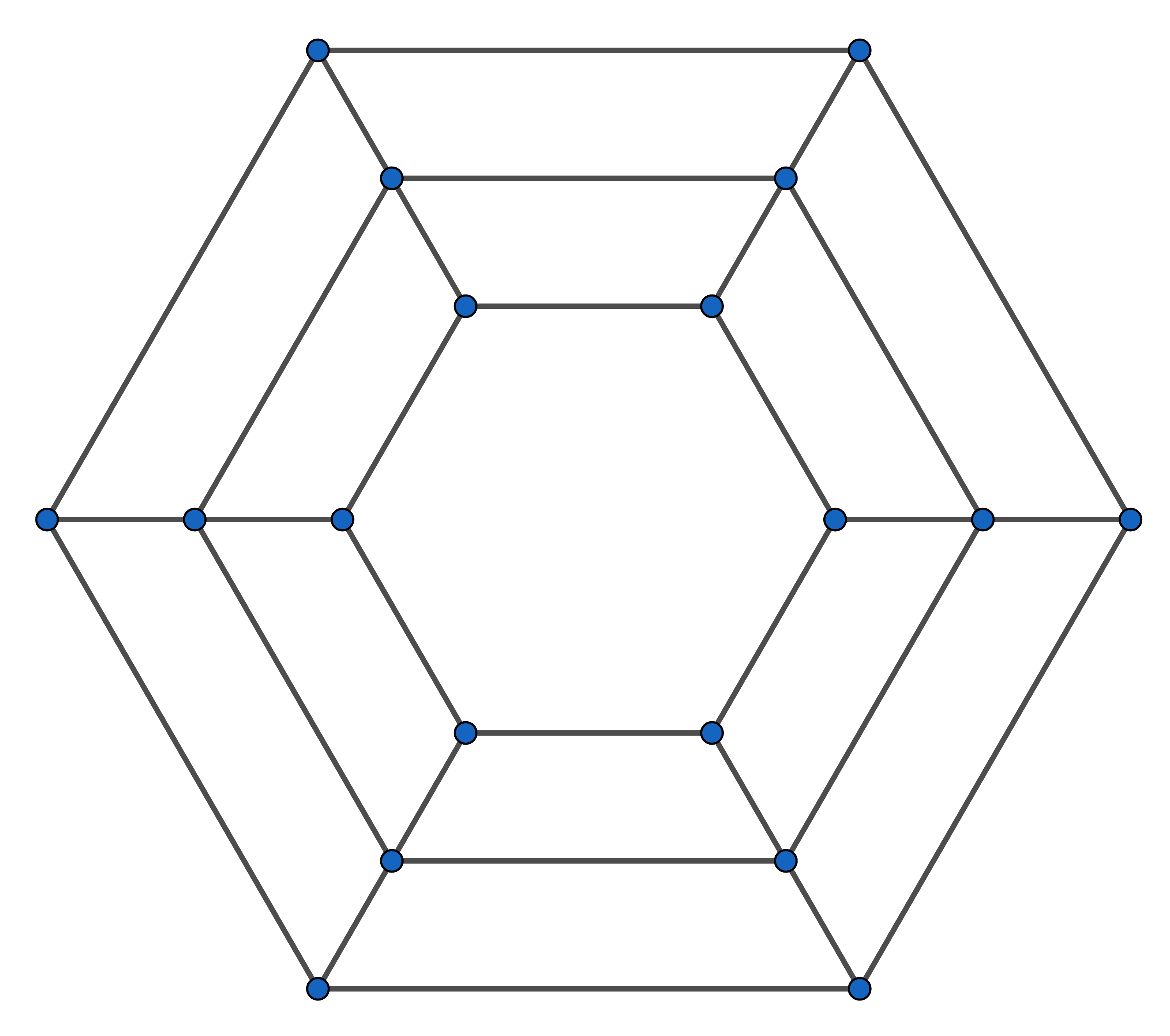}
\caption{In the polyhedron on the left, of the form $H\q K_2$, the quadrangular face
$[v_1,v_{2},w_{2},w_1]$
is adjacent to two quadrangles and two $5$-gons. In $C_6\q P_3$ (right), each quadrangular face is adjacent to three quadrangles.}
\label{fig:ccone}
\end{subfigure}
    \hfill
\begin{subfigure}{0.32\textwidth}
\centering
\includegraphics[width=4cm]{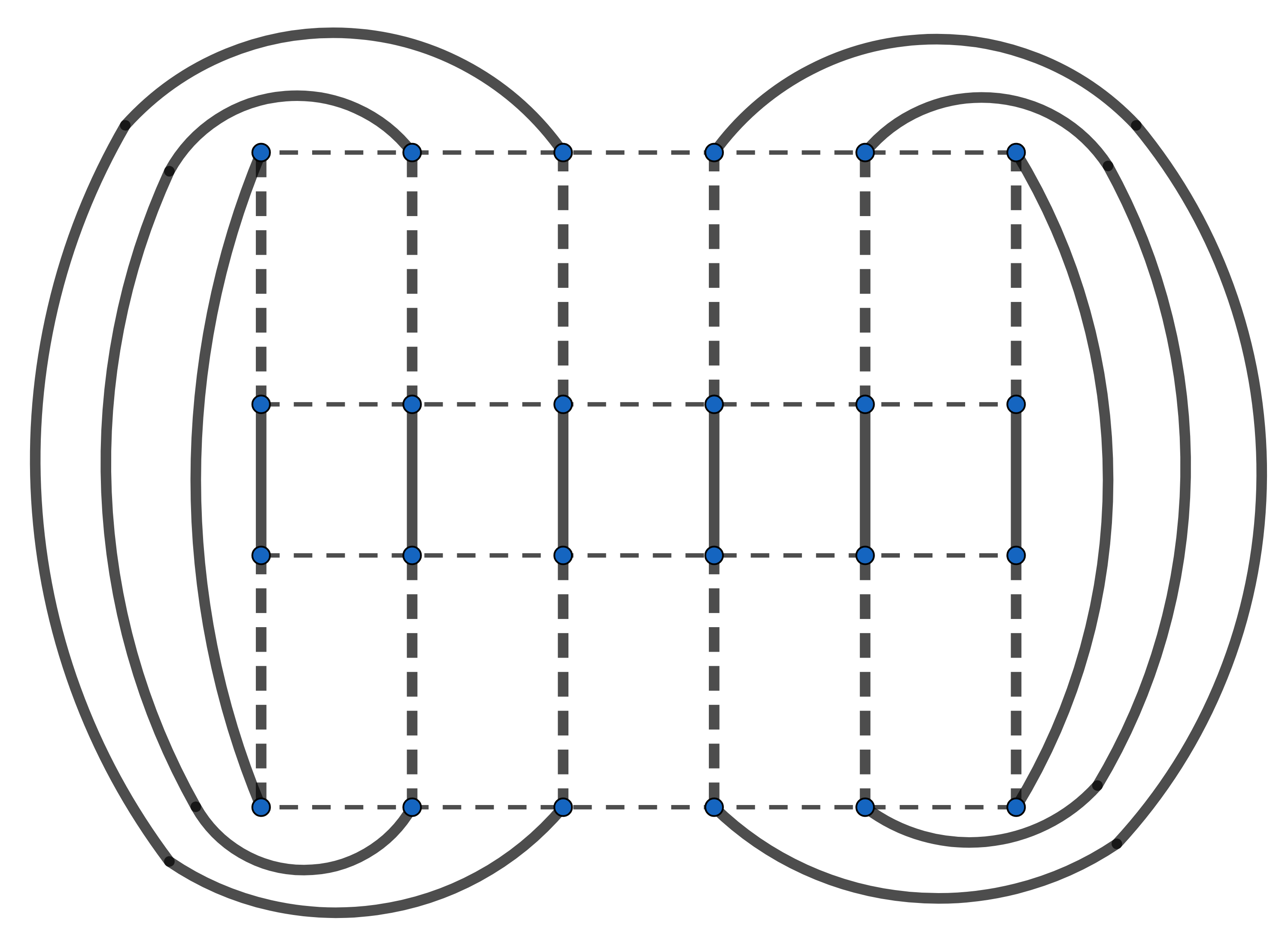}
\caption{$C_4\q P_6\simeq F_{12}\q K_2$. The dashed lines are the copies of $F_{12}$. The bold lines (whether dashed or not) are the bases of the stacked cubes.}
\label{fig:cctwo}
\end{subfigure}
\caption{Proposition \ref{le:CC}.}
\label{fig:cc}
\end{figure}

Therefore, suppose $H\q K_2$ has an $h$-gonal face with $h\neq 4$, containing the edge $v_iv_{i+1}$, say; then the quadrangular face
\[[v_i,v_{i+1},w_{i+1},w_i]\]
is adjacent to two quadrangles and two $h$-gons (one on each copy of $H$ in its Cartesian product with $K_2$), as in Figure \ref{fig:ccone}, left. But on the other hand, since $m\geq 3$, each quadrangular face of $C_n\q P_m$ is adjacent to at least three other quadrangular faces, as in Figure \ref{fig:ccone}, right. It follows that, if $C_n\q P_m\simeq H\q K_2$, then 
every face of $H\q K_2$ is quadrangular, i.e.~$n=4$ (stacked cubes). By considering the ladder graph
\[H=F_{2m},\]
we indeed obtain $C_4\q P_m\simeq F_{2m}\q K_2$ for every $m\geq 1$, as in Figure \ref{fig:cctwo}.


\end{proof}

\subsection[Simultaneous Cartesian and Kronecker product I]{$C_n\q P_m\simeq J\w K_2$}
\label{sec:ck1}
\begin{prop}
\label{prop:stacked}
The polyhedron $C_n\q P_m$, $n\geq 3$, $m\geq 2$ is a Kronecker product if and only if either $n\equiv 2 \pmod 4$, or $n\equiv 0 \pmod 4$ and $m\equiv 0 \pmod 2$.
\end{prop}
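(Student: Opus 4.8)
The plan is to reduce the statement to a question about the automorphisms of $\calP:=C_n\q P_m$ and then settle it by a short parity computation. First I would record that $\calP$ is a $3$-polytope for $m\geq 2$, so by \cite[Proposition 1.1]{mafkpr} any Kronecker factorisation is of the form $\calP\simeq J\w K_2$ with $J$ a simple graph. Since $J\w K_2$ is the bipartite double cover of $J$ and is connected (being a polytope), $J$ is connected and non-bipartite, while $\calP$ is bipartite; as $P_m$ is always bipartite, $\calP$ is bipartite exactly when $C_n$ is, which already forces $n$ even. The conceptual core of the reduction is the following equivalence, which I would state and prove in two lines: a connected bipartite graph with colour classes $X,Y$ is isomorphic to $J\w K_2$ for a \emph{simple} graph $J$ if and only if it admits a fixed-point-free involutory automorphism $\sigma$ with $\sigma(X)=Y$ and with no vertex adjacent to its image. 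Indeed, for $\calP=J\w K_2$ the map $(v,x)\mapsto(v,y)$ is such a $\sigma$, and no vertex is adjacent to its image precisely because $J$ is loopless; conversely, given $\sigma$ one sets $J:=\calP/\sigma$, where bipartiteness forces the edges between any two $\sigma$-orbits to be exactly the pair demanded by the double cover, while the no-adjacent-image hypothesis guarantees that $J$ has no loop.

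Next I would pin down the automorphisms. Using unique prime factorisation of connected graphs with respect to the Cartesian product (Sabidussi--Vizing/Imrich), one has $\operatorname{Aut}(C_n\q P_m)=D_n\times\mathbb{Z}_2$ acting coordinatewise for all $n\geq 3$, $m\geq 2$, the sole exception being the cube $C_4\q P_2$; since that graph has $m$ even, it concerns only the ``if'' direction and can be set aside. Writing $C_n=[u_1,\dots,u_n]$ with indices read modulo $n$ and $P_m=v_1,\dots,v_m$, every automorphism thus has the form $\sigma(u_i,v_j)=(u_{\epsilon i+k},v_{\phi(j)})$ with $\epsilon\in\{+1,-1\}$, $k\in\mathbb{Z}_n$, and $\phi(j)\in\{j,\,m+1-j\}$. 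With the proper colouring $c(u_i,v_j)=(i+j)\bmod 2$ (well defined since $n$ is even) I would then tabulate the four relevant conditions on $\sigma$: being an involution (forcing $k\in\{0,n/2\}$ when $\epsilon=1$, automatic when $\epsilon=-1$), being fixed-point-free, interchanging $X$ and $Y$, and having no vertex adjacent to its image. Each reduces to an elementary parity condition on $k$, $m$, $n/2$, and $\epsilon$.

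The case analysis then reads off the result. When $n\equiv 2\pmod 4$ the rotation $\sigma(u_i,v_j)=(u_{i+n/2},v_j)$ is a fixed-point-free involution swapping the colour classes (as $n/2$ is odd) that moves each vertex by $n/2\geq 3$ along the cycle, so no vertex is adjacent to its image; hence $\calP$ is a Kronecker product for every $m$. When $n\equiv 0\pmod 4$ and $m$ is even, the same role is played by $\sigma(u_i,v_j)=(u_{i+n/2},v_{m+1-j})$, which is free because $m$ is even, swaps colours because $n/2$ and $m$ are both even, and moves every vertex in \emph{both} coordinates, again precluding adjacency to its image. Finally, when $n\equiv 0\pmod 4$ and $m$ is odd, the parity table shows that the only free involutions interchanging $X$ and $Y$ are the reflections $\sigma(u_i,v_j)=(u_{-i+k},v_{\phi(j)})$ with $k$ odd; for each of these, solving $2i\equiv k\mp 1\pmod n$ (possible since $k\mp 1$ is even and $n$ is even) produces a vertex on the reflection axis adjacent to its image. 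Hence no admissible $\sigma$ exists and $\calP$ is not a Kronecker product.

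The main obstacle is the ``only if'' direction for $n\equiv 0\pmod 4$ with $m$ odd, and it splits into two points. First, one must be sure that \emph{every} automorphism of $C_n\q P_m$ has the coordinatewise dihedral form: this needs the prime-factorisation theorem together with a separate check that the potentially exceptional graphs $C_4\q P_m$ with $m\geq 3$ still have automorphism group $D_4\times\mathbb{Z}_2$ (the factor-transposition of the two $K_2$'s in $C_4=K_2\q K_2$ coincides with a diagonal reflection of $C_4$, so it introduces no new involutions). Second, one must verify that in this regime every colour-swapping free involution is a reflection forcing a vertex adjacent to its image, which is exactly what rules out a loopless factor $J$. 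The ``if'' direction, by contrast, is immediate once the two explicit involutions above are exhibited.
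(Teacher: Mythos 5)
Your proposal is correct, and it takes a genuinely different route from the paper's. The paper works entirely inside the product: for the ``if'' direction it writes down explicit factors ($C_{4N+2}\q P_M\simeq (C_{2N+1}\q P_M)\w K_2$, and for $C_{4N}\q P_{2M}$ the non-planar $J$ of \eqref{eq:opp}, a stacked prism with antipodal diagonals added to one base), while for the ``only if'' case $n\equiv 0\pmod 4$, $m$ odd it assumes $C_{4N}\q P_{2M+1}\simeq J\w K_2$, examines the subgraph induced by the degree-$3$ vertices (the two base cycles), proves that the $J$-labels $v_1,\dots,v_{4N}$ read along one base are pairwise distinct, and reaches a parity contradiction by tracking the paths of $2M-1$ degree-$4$ vertices joining corresponding base vertices, splitting on the sign $\pm$ with which indices propagate from one base to the other. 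You instead import two structural facts --- the correspondence between factorisations $G\simeq J\w K_2$ with $J$ simple and colour-swapping involutory automorphisms of $G$ having no vertex adjacent to its image, and Sabidussi--Vizing unique prime factorisation giving $\operatorname{Aut}(C_n\q P_m)\simeq D_n\times\mathbb{Z}_2$ acting coordinatewise --- after which the proposition collapses to your parity table. I checked the case analysis and it is sound, including the two delicate points: your observation that the transposition of the two $K_2$ factors of $C_4$ inside $C_4\q P_m$, $m\geq 3$, is already a diagonal reflection of $C_4$, so no automorphisms beyond $D_4\times\mathbb{Z}_2$ arise; and the reflection case, where $k$ odd makes $2i\equiv k\pmod n$ unsolvable but $2i\equiv k\mp 1\pmod n$ solvable, and a $\phi$-fixed path level (which exists precisely when $\phi=\mathrm{id}$ or $m$ is odd) then yields a vertex adjacent to its image. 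One cosmetic remark: fixed-point-freeness is automatic for any colour-swapping $\sigma$ on a bipartite graph, so that hypothesis in your key lemma is redundant. As to what each approach buys: yours is shorter and more systematic, it classifies all admissible involutions at once (the quotients by your two surviving involutions are exactly the paper's factors, consistent with the cancellation Theorem \ref{thm:JL}), and it explains conceptually why $m$ odd fails; the paper's argument is elementary and self-contained, avoiding unique prime factorisation and the cover--involution dictionary, and its explicit factor \eqref{eq:opp} is reused later (Proposition \ref{le:JH} and Corollary \ref{cor:tri}), so the constructive detour pays off elsewhere in the paper.
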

\begin{proof}
For the case $n\equiv 2 \pmod 4$ we have
\[C_{4N+2}\q P_M\simeq (C_{2N+1}\q P_M)\w K_2, \qquad N,M\geq 1,\]
as in Figure \ref{fig:qqone}.

\begin{figure}[ht]
\centering
\begin{subfigure}{0.48\textwidth}
\centering
\includegraphics[width=6.5cm]{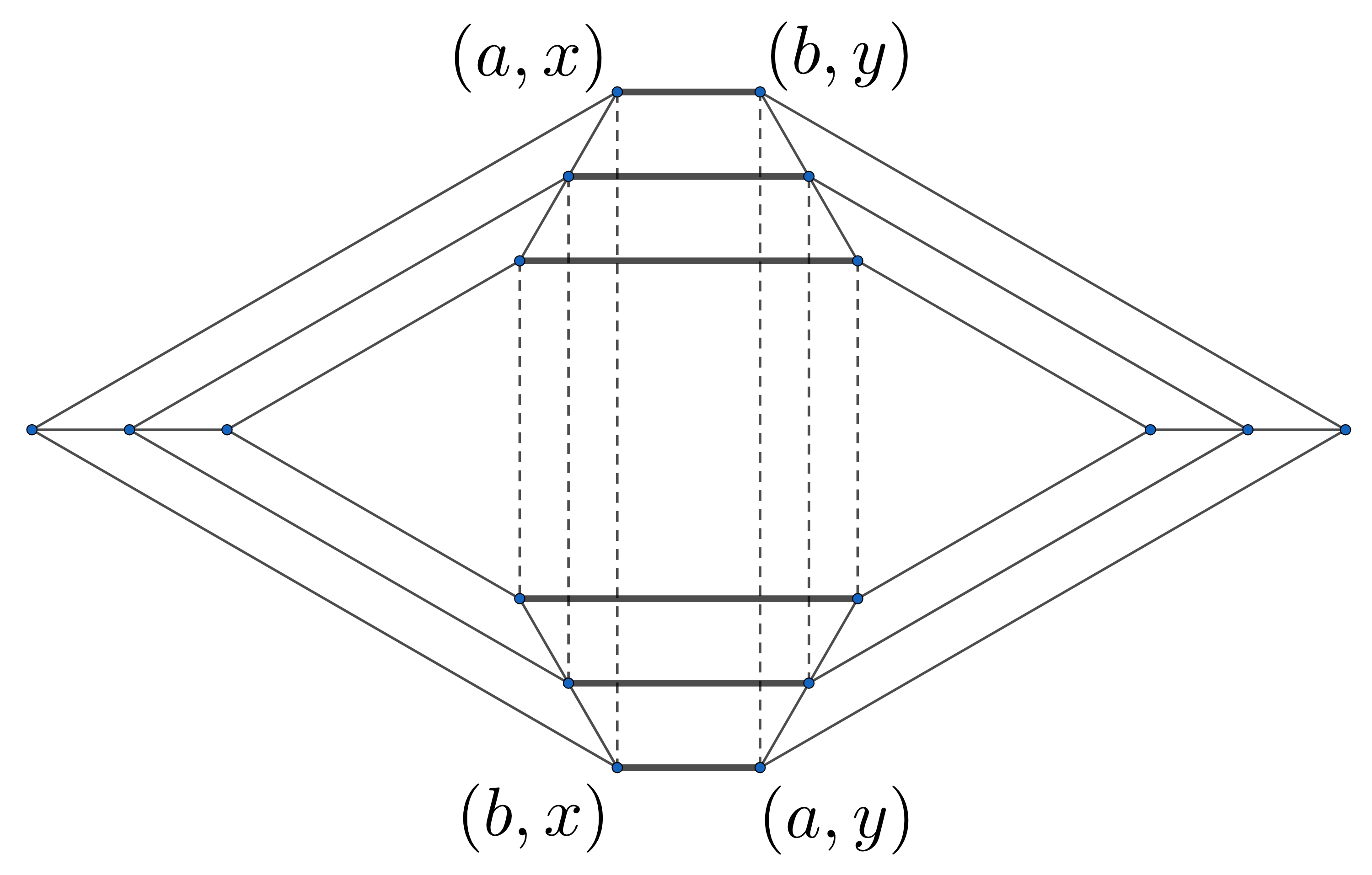}
\caption{Here $N=1$, $n=4N+2=6$, and $M=m=3$. Referring to the construction in Theorem \ref{thm:ab}, we start with two copies of $C_{3}\q P_3$, then we delete the dashed edges, and add the thick ones, obtaining $(C_{3}\q P_3)\w K_2\simeq C_{6}\q P_3$.}
\label{fig:qqone}
\end{subfigure}
\hfill
\begin{subfigure}{0.48\textwidth}
\centering
\includegraphics[width=3.cm]{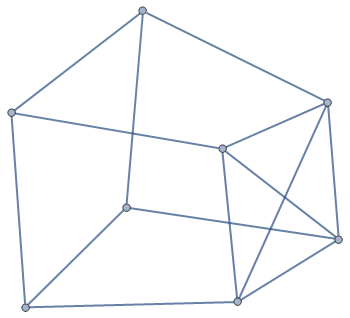}
\hspace{0.25cm}
\includegraphics[width=3.cm]{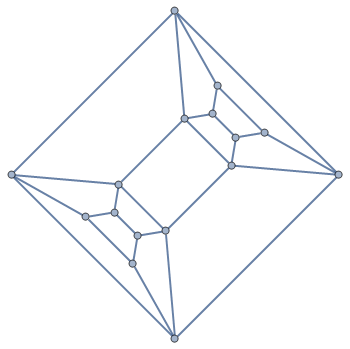}
\caption{Here $N=1$, $n=4N=4$, and $M=m=2$. Letting $J'=C_{4}\q P_{2}$, then $J$ is the (non-planar) graph on the left, and $J\w K_2\simeq C_{4}\q P_{4}$, depicted on the right (from \cite[Figure 4]{mafkpr}).}
\label{fig:qqtwo}
\end{subfigure}
\caption{Proposition \ref{prop:stacked}, case $n\equiv 2 \pmod 4$ (\ref{fig:qqone}), and case $n\equiv 0 \pmod 4$ and $m\equiv 0 \pmod 2$ (\ref{fig:qqtwo}).}
\label{fig:qq}
\end{figure}

Next, let $J'=C_{4N}\q P_M$,  where $N,M\geq 1$. Call
\[[v_1,v_2,\dots,v_{4N}]\]
one of the two bases of the stacked prism $J'$. Letting
\begin{equation}
\label{eq:opp}
J=J'+v_1v_{1+2N}+v_2v_{2+2N}+\dots+v_{2N}v_{4N},
\end{equation}
we obtain
\[C_{4N}\q P_{2M}\simeq J\w K_2,\]
as in Figure \ref{fig:qqtwo}. Note that $J$ is always non-planar, unless $N=M=1$, i.e., $J$ is the tetrahedron and $J\w K_2$ the cube.

On the other hand, if $C_n\q P_m$ is a Kronecker product with $K_2$, then it is bipartite, hence every copy of $C_n$ is bipartite so $n$ must be even. It remains to prove that $C_n\q P_m$ is not the Kronecker cover of any graph when $n\equiv 0\pmod 4$ and $m\equiv 1\pmod 2$. Assume by contradiction that for some $N,M\geq 1$ there exists a graph $J$ satisfying
\[
G:=C_{4N}\q P_{2M+1}\simeq J\w K_2.
\]
Let $G'$ be the subgraph of $G$ generated by its vertices of degree $3$. Then
\[G'=A\dot\cup B,\]
where $A,B$ are $4N$-cycles, bases of the stacked prism $G$. We may write
\[A: v_1x,v_2y,v_3x,\dots,v_{4N}y,\]
where  as usual $V(K_2)=\{x,y\}$.
By definition of Kronecker product, the vertices
\[v_1,v_2,\dots,v_{4N}\] 
have degree $3$ in $J$. We claim that these are all distinct.

By contradiction, let $v_1=v_i$ for some $i\neq 1$. We cannot have two copies of $v_1x$ in the cycle $A$, thus $i$ is even. Further, by definition of Kronecker product, in $J$ the only two vertices of degree $3$ adjacent to $v_1$ are $v_2$ and $v_{4N}$. Thereby,
\[\{v_{i-1}, v_{i+1}\}=\{v_2, v_{4N}\}.\]
Say that $v_{i-1} = v_2$. As the only vertices of degree $3$ adjacent to $v_2$ in $J$ are $v_1$ and $v_3$, we also have $v_{i-2}=v_3$, and continuing in this fashion we obtain the following path in $G$,
\[v_1x,v_2y,v_3x,\dots,v_jx_1,v_jx_2,\dots,v_3y,v_2x,v_1y,\]
where $1\leq j\leq 4N$ and $\{x_1,x_2\}=\{x,y\}$. That means, $v_jx,v_jy$ are adjacent in $G$, contradicting the Kronecker product definition. Otherwise, if
$v_{i-1}=v_{4N}$, then $A$ becomes
\[v_1x,v_2y,\dots,v_{4N}x,v_1y,v_2x,\dots,v_{4N}y,\]
which cannot happen as $4N$ is even.





Therefore, \[v_1,v_2,\dots,v_{4N}\] are all distinct and they are the vertices of degree $3$ in $J$, as claimed. Moreover
\[B: v_1y,v_2x,v_3y,\dots,v_{4N}x.\]
Now in $G$, there is a path starting from $v_1x$ and ending at $v_ix$ in $B$ for some $1\leq i\leq 4N$, traversing exactly $2M-1$ vertices of degree $4$, as in Figure \ref{fig:imp}. Since $v_ix$ belongs to $B$, $i$ is even. On one hand, by definition of Kronecker product, there is also a path from $v_iy$ to $v_1y$, via $2M-1$ vertices of degree $4$. On the other hand it also follows that, for every $1\leq j\leq 4N$, there is a path from $v_jx$ to $v_{(i\pm(j-1)\mod{4N})}x$ (and the same holds with $y$ in place of $x$), along $2M-1$ vertices of degree $4$, where the $\pm$ sign has the same value for each $j$ (you can see that the plus occurs when the indexes increase in the same direction on the cycles $A$ and $B$, the minus when they increase in opposite directions).

\begin{figure}[ht]
\centering
\begin{subfigure}{0.45\textwidth}
\centering
\includegraphics[width=5.25cm]{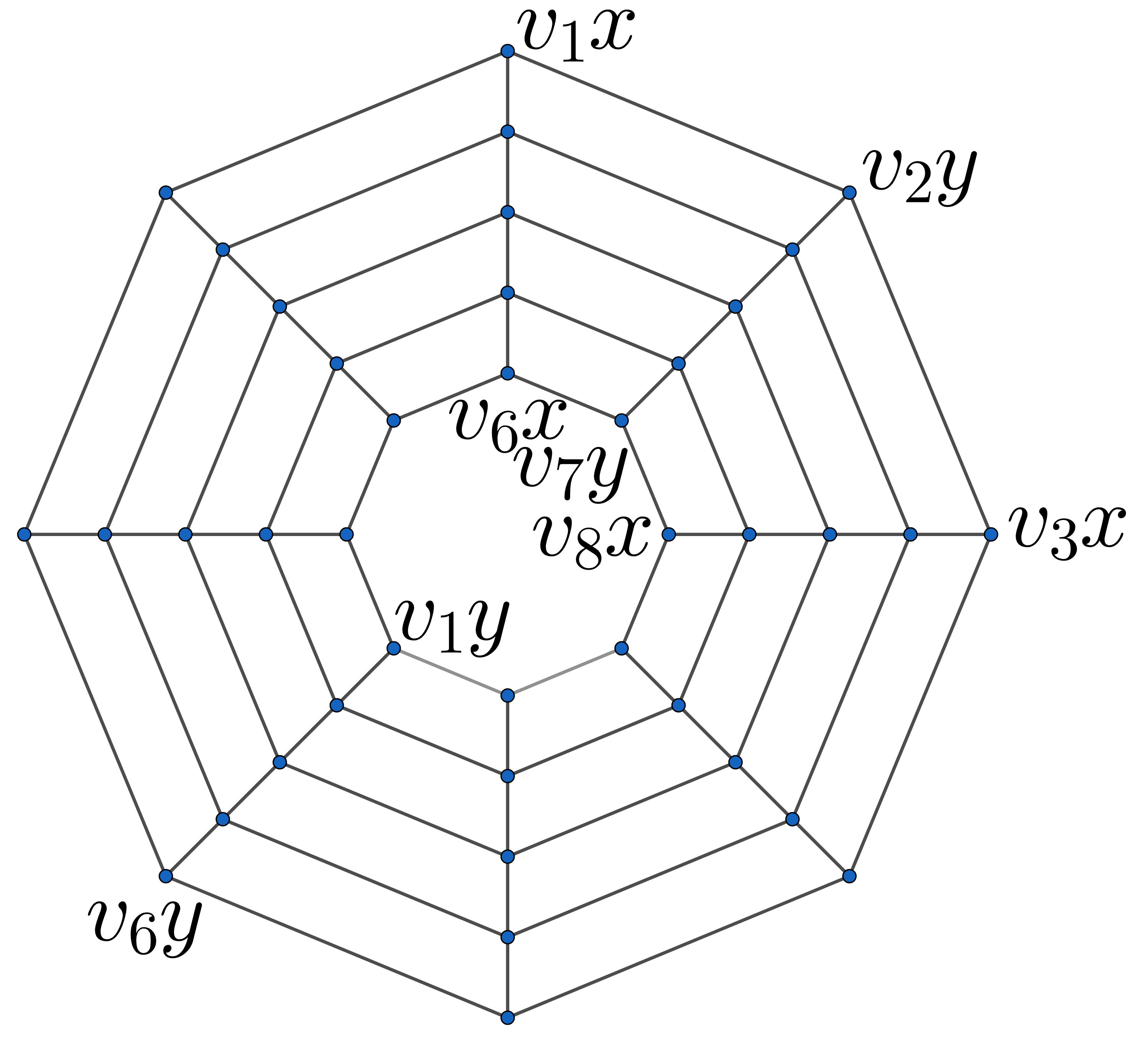}
\caption{Here the value of the $\pm$ is a plus, leading to impossible scenarios such as $v_1y$ in place of $v_3y$.}
\label{fig:imp1}
\end{subfigure}
\hspace{0.125cm}
\begin{subfigure}{0.45\textwidth}
\centering
\includegraphics[width=5.25cm]{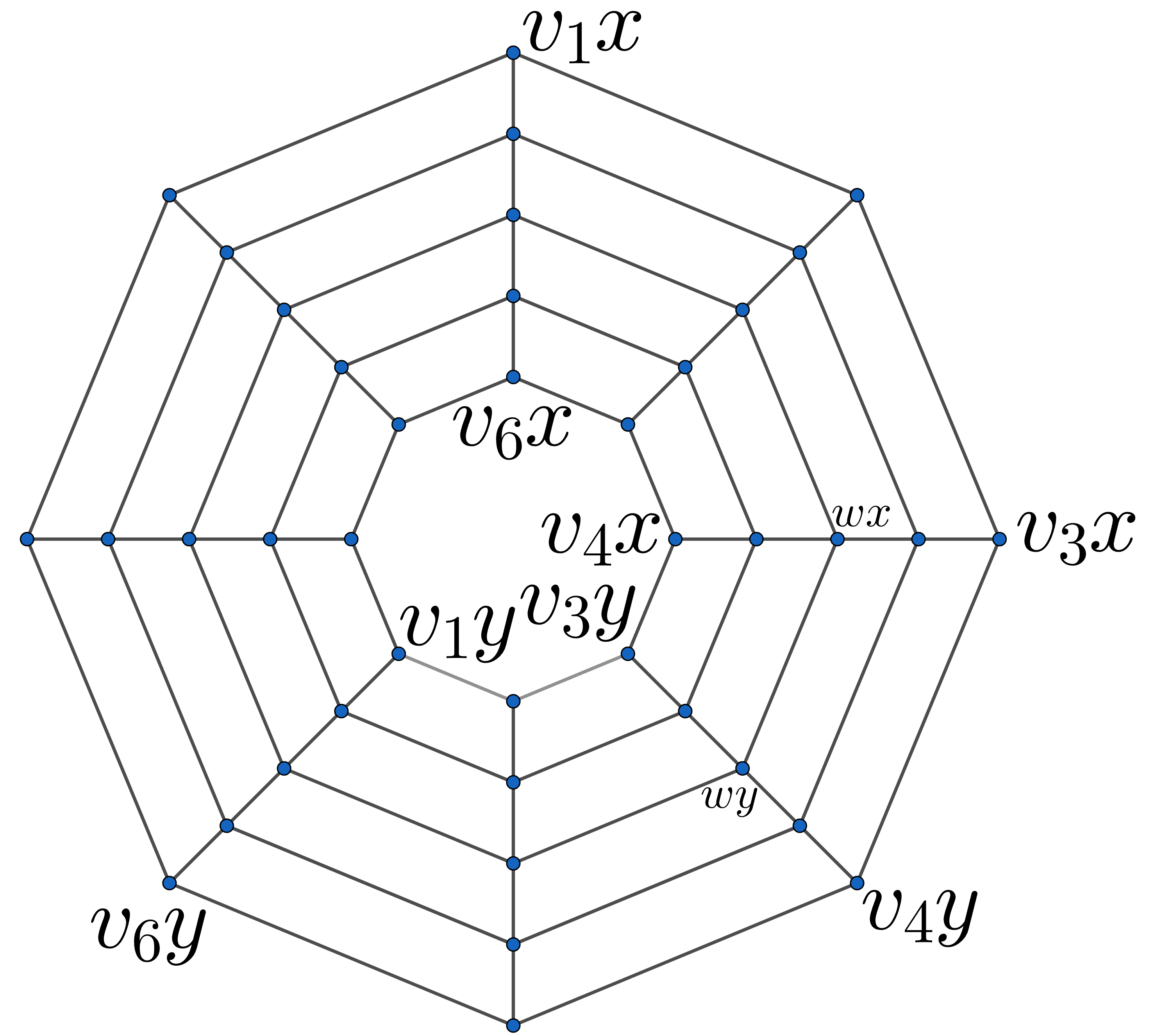}
\caption{Here the value of the $\pm$ is a minus, leading to the contradiction that there exists $w\in V(J)$ such that $wx,wy$ are adjacent in $G$.}
\label{fig:imp2}
\end{subfigure}
\caption{Proposition \ref{prop:stacked}, case $n\equiv 0 \pmod 4$ and $m\equiv 1 \pmod 2$. In this illustration $N=2$, $n=8$, $M=2$, $m=5$, and $i=6$.}
\label{fig:imp}
\end{figure}

If the value of the $\pm$ is a plus (as in Figure \ref{fig:imp1}), we combine the above observations with $j=i$ to deduce that there is a path from $v_ix$ to $v_{(2i-1\mod{4N})}x$ along $2M-1$ vertices of degree $4$. Thereby,
\[2i-1\equiv 1 \pmod{4N},\]
i.e.~$i\in\{1,2N+1\}$, contradicting the fact that $i$ is even. 

Hence the value of the $\pm$ is a minus, as in Figure \ref{fig:imp2}. In particular, there is a path $P_x$ in $G$ between $v_{i/2}x$ and $v_{i/2+1}x$ along $2M-1$ vertices of degree $4$, and likewise there is a path $P_y$ in $G$ between $v_{i/2}y$ and $v_{i/2+1}y$ along $2M-1$ vertices of degree $4$.
Keep in mind that, by construction, $v_{i/2}x,v_{i/2+1}y$ and $v_{i/2}y,v_{i/2+1}x$ are adjacent.
Let $wx$ be the central vertex of $P_x$, well-defined since $P_x$ has $2M+1$ total vertices. Then $wy$ is the central vertex of $P_y$, hence $wx,wy$ are adjacent in $G$, in contradiction with the Kronecker product definition.
\end{proof}

\subsection[Simultaneous Cartesian and Kronecker product II]{$H\q K_2\simeq J\w K_2$}
\label{sec:ck2}
\begin{prop}
\label{le:JH}
Let $H,J$ be graphs such that $H\q K_2\simeq J\w K_2$, and the product is a $3$-polytope. Then $H$ is obtained from a $2\ell$-gon $[u_1,u_2,\dots,u_{2\ell}]$ by adding diagonals $u_iu_j$ in such a way that the resulting graph is still bipartite and planar, with
\[u_iu_{(i+\ell\mod 2\ell)}\not\in E(H), \quad 1\leq i\leq 2\ell,\]
and
\[u_iu_j\in E(H)\iff u_{(i+\ell\mod 2\ell)}u_{(j+\ell\mod 2\ell)}\in E(H).\]
Further, we have
\begin{equation}
\label{eq:JH}
J\simeq
\begin{cases}
H+u_1u_{1+\ell}+u_2u_{2+\ell}+\dots+u_{\ell}u_{2\ell}&\ell \text{ even,}\\
\widehat{H}\q K_2&\ell \text{ odd,}
\end{cases}
\end{equation}
where $\widehat{H}$ is an outerplanar, Hamiltonian graph of order $\ell$.
\end{prop}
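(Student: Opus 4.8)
The plan is to exploit the two ways the polytope $\calP:=H\q K_2\simeq J\w K_2$ decomposes and to pin down the deck involution of the Kronecker structure inside the Cartesian structure. First I would record the coarse features of $H$. Since $\calP$ is a Kronecker cover it is bipartite, and since it is a polyhedral Cartesian product of the form $H\q K_2$, the factor $H$ is outerplanar and Hamiltonian \cite[Proposition 1.9]{mafkpr}; hence $H$ is a polygon $[u_1,\dots,u_p]$ with chords, and bipartiteness of $\calP$ forces $p=2\ell$ even with every chord joining vertices of opposite parity. I fix the Cartesian picture: two sheets $H_x=\{u_i^x\}$ and $H_y=\{u_i^y\}$, each an induced copy of $H$, joined by the matching $M=\{u_i^xu_i^y\}$. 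In the unique spherical embedding (Whitney \cite{whitco}) the two boundary cycles $Z_x,Z_y$ cut $S^2$ into two disks $D_x,D_y$ carrying the sheet faces, separated by an equatorial band of $2\ell$ quadrilaterals $Q_i=[u_i^x,u_{i+1}^x,u_{i+1}^y,u_i^y]$. I also record the two $2$-colourings of $V(\calP)$: the proper bipartition $c_B(u_i^t)=i+t\pmod 2$, and the (improper) sheet indicator.

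Next I would bring in the Kronecker structure. Writing $\calP\simeq J\w K_2$, the sheet-swap of the Kronecker product transports to an automorphism $\sigma$ of $\calP$ that interchanges the two classes of $c_B$, is free on vertices, carries no vertex to an adjacent one, and has quotient $\calP/\sigma\simeq J$; conversely any such $\sigma$ realises $\calP$ as a bipartite double cover. The heart of the argument is to determine $\sigma$. Using the unique embedding I would show that $\sigma$ preserves the equatorial band $\{Q_i\}$, hence permutes the pair $\{D_x,D_y\}$; this yields the dichotomy that $\sigma$ either preserves or swaps the two sheets and acts on the index set as an automorphism $\alpha$ of the $2\ell$-cycle $Z_x$, i.e. a rotation or a reflection. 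The three conditions on $\sigma$ then cut this down sharply: applied to the cycle edges of $Z_x$, the no-edge-to-image condition rules out every reflection (some $u_i$ is always sent to a neighbour) and forces, in the sheet-preserving case, $u_iu_{\alpha(i)}\notin E(H)$; the colour-swapping condition forces $\alpha$ to preserve parity when $\sigma$ swaps sheets and to reverse it when $\sigma$ preserves them. Together with $\alpha^2=\mathrm{id}$ and freeness this leaves exactly the half-turn $\alpha(i)=i+\ell$, which is sheet-swapping precisely when $\ell$ is even and sheet-preserving precisely when $\ell$ is odd. Since $\sigma$ is a graph automorphism, invariance of the chord set under $\alpha$ is exactly the antipodal symmetry $u_iu_j\in E(H)\iff u_{i+\ell}u_{j+\ell}\in E(H)$; and $u_iu_{i+\ell}\notin E(H)$ comes for free from bipartiteness when $\ell$ is even (antipodal vertices share a parity) and from the no-edge-to-image condition when $\ell$ is odd.

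Finally I would read off $J=\calP/\sigma$. For $\ell$ even, $\sigma(u_i^t)=u_{i+\ell}^{\,t+1}$; choosing $\{u_i^x\}$ as orbit representatives identifies $V(J)$ with $V(H)$, the sheet edges descend to $E(H)$ and the matching edges descend to the $\ell$ antipodal chords, giving $J\simeq H+u_1u_{1+\ell}+\dots+u_\ell u_{2\ell}$. For $\ell$ odd, $\sigma(u_i^t)=u_{i+\ell}^{\,t}$ preserves sheets; the orbits split into an $x$-part and a $y$-part, each descending to the antipodal quotient $\widehat H:=H/\langle i\mapsto i+\ell\rangle$, while the matching descends to a perfect matching between them, so $J\simeq \widehat H\q K_2$. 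That $\widehat H$ is an outerplanar Hamiltonian graph of order $\ell$ I would obtain by realising the half-turn as a geometric rotation of a centrally symmetric disc drawing of $H$ (the absence of long chords guarantees none passes through the centre) and passing to the quotient disc, whose boundary is the $\ell$-cycle.

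I expect the rigidity step -- controlling the action of $\sigma$ on the sheet/band decomposition -- to be the main obstacle, because the sheet partition need not be canonical (for instance the cube decomposes as $C_4\q K_2$ in several ways), so $\sigma$ could a priori mix the two sheets. The plan is to defeat this using planarity and $3$-connectivity of $\calP$: the equatorial band is the unique annular ring of faces separating the two families of sheet faces, so any automorphism must respect it; the few highly symmetric small cases, where several Cartesian decompositions coexist, can be checked directly and are seen to satisfy the conclusion regardless.
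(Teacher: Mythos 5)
Your deck-involution framework is sound and genuinely different from the paper's route: the paper never analyses automorphisms of $\calP$, but instead restricts attention to the spanning prism over the Hamilton cycle of $H$, writes $C_{2\ell}\q K_2\simeq J_{-}\w K_2$ with $J_{-}$ read off from the explicit decompositions of Section \ref{sec:ck1} (namely \eqref{eq:opp} for $\ell$ even, and the $\ell$-gonal prism for $\ell$ odd), and then transfers the diagonals of $H$ one at a time through the fixed labelling $v_ix\sim v_{(i+\ell\bmod 2\ell)}y$. Your index analysis of $\sigma$ (only the half-turn $\alpha(i)=i+\ell$ survives the freeness, non-adjacency and colour-swapping constraints, sheet-swapping exactly when $\ell$ is even), the derivation of the antipodal chord symmetry, and both quotient computations giving \eqref{eq:JH} are correct; your non-crossing argument for $\widehat H$ is the paper's own crossing argument for the diagonals of $J_{-}$ run in the opposite direction.

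However, there is a genuine gap at exactly the step you flag as the main obstacle, and your proposed fix does not close it. The rigidity claim is circular as stated: the band $\{Q_i\}$ is ``the unique annular ring of faces separating the two families of sheet faces'' only after the sheets have been fixed, and what you must prove is that the \emph{given} Kronecker involution $\sigma$ maps the face set $\{Q_i\}$ to itself; nothing in the sketch delivers this. Worse, the exceptional cases are not ``a few highly symmetric small cases'': whenever $H$ is a quadrangulated polygon (every internal face of $H$ a $4$-gon), \emph{every} face of $H\q K_2$ is a quadrilateral, so band faces are combinatorially of the same type as sheet faces and there is no intrinsic way to tell them apart face-by-face; this class includes the infinite family $H\simeq F_{2m}$ with $H\q K_2\simeq C_4\q P_m$ --- precisely the polyhedra with two distinct Cartesian decompositions of Theorem \ref{thm:dou} --- as well as fan-like quadrangulated polygons, so no finite case check can finish the argument. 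The natural intrinsic characterisation (vertical matching edges are the edges lying on two quadrilateral faces) also fails in exactly this class, since Hamilton-cycle edges bordering an internal $4$-gon of $H$ then lie on two quadrilateral faces as well. To complete your proof you would need an actual band-preservation lemma covering all quadrangulated polygons $H$ --- for instance by a degree/path analysis in the spirit of the proof of Proposition \ref{prop:stacked}, possibly combined with Theorem \ref{thm:JL} and Proposition \ref{le:CC} in the stacked-cube case --- or else bypass automorphism rigidity altogether, as the paper does, by anchoring the Kronecker structure to the spanning prism and its known Kronecker factors.
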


\begin{proof}
Since $H\q K_2$ is a $3$-polytope, then $H$ is outerplanar and Hamiltonian \cite[Proposition 1.9]{mafkpr}. Since $H\q K_2$ is a Kronecker product, it must be bipartite, hence $H$ is bipartite. Thereby, $H$ may be constructed by adding diagonals to a $2\ell$-cycle
\[C_{2\ell}: u_1,u_2,\dots,u_{2\ell}\]
(if the cycle were of odd length, then $H$ would not be bipartite as it would contain a face of odd length, no matter how many diagonals one were to add).

We write
\[C_{2\ell}\q K_2\simeq J_{-}\w K_2,\]
with $J_{-}$ to be determined. There are two cases.

First case, $\ell$ is even. By \eqref{eq:opp}, we obtain
\[J_{-}\simeq [v_1,v_2,\dots,v_{2\ell}]+v_1v_{1+\ell}+v_2v_{2+\ell}+\dots+v_{\ell}v_{2\ell}.\]
Consequently, the prism $J_{-}\w K_2$ consists of the two bases
\[v_1x,v_2y,\dots,v_{2\ell}y
\quad\text{ and }\quad
v_1y,v_2x,v_3y,\dots,v_{2\ell}x,\]
plus the edges
\[v_ixv_{(i+\ell\mod 2\ell)}y, \quad 1\leq i\leq 2\ell,\]
as in Figure \ref{fig:ppone}.

If $u_iu_j$ is a diagonal of the cycle $C_{2\ell}$ in $H$, then first of all $i,j$ are of different parity since $H$ is bipartite. Moreover,
\[v_ix,v_jy\]
are adjacent in $J\w K_2$. Since $v_ix$ and $v_{(i+\ell\mod{2\ell})}y$ are adjacent, and also $v_jy$ and $v_{(j+\ell\mod{2\ell})}x$ are adjacent, and since $J\w K_2\simeq H\q K_2$, we deduce that
\[v_{(i+\ell\mod{2\ell})}y,v_{(j+\ell\mod{2\ell})}x\]
are adjacent as well, hence $u_{(i+\ell\mod{2\ell})}u_{(j+\ell\mod{2\ell})}\in E(H)$. In addition, $v_iv_j\in E(J)$. The claimed conditions on $H,J$ hold in the first case.

\begin{figure}[ht]
\centering
\begin{subfigure}{0.3\textwidth}
\centering
\includegraphics[width=3.25cm]{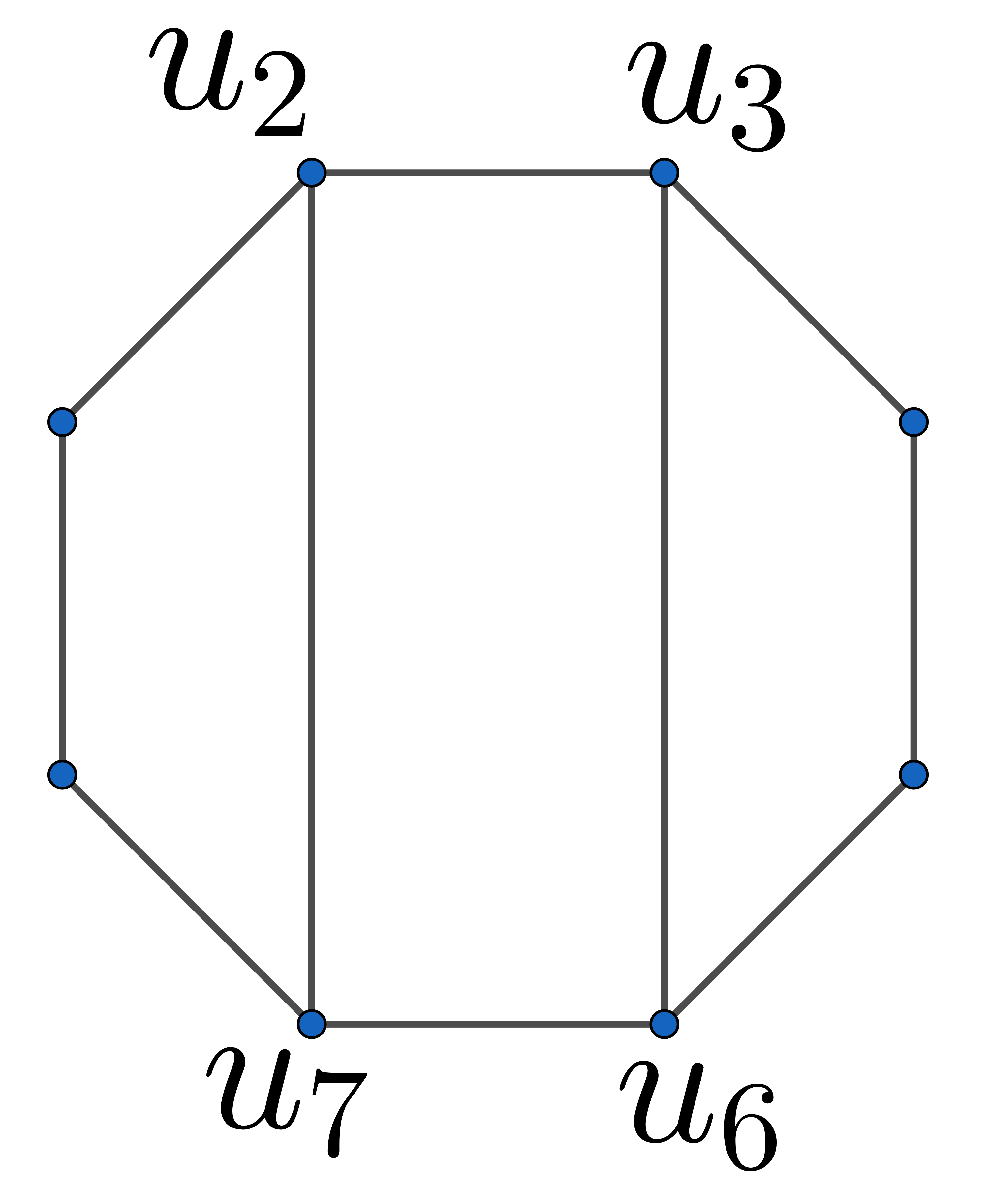}
\caption{$H$.}
\label{fig:HH1}
\end{subfigure}
\begin{subfigure}{0.3\textwidth}
\centering
\includegraphics[width=4.5cm]{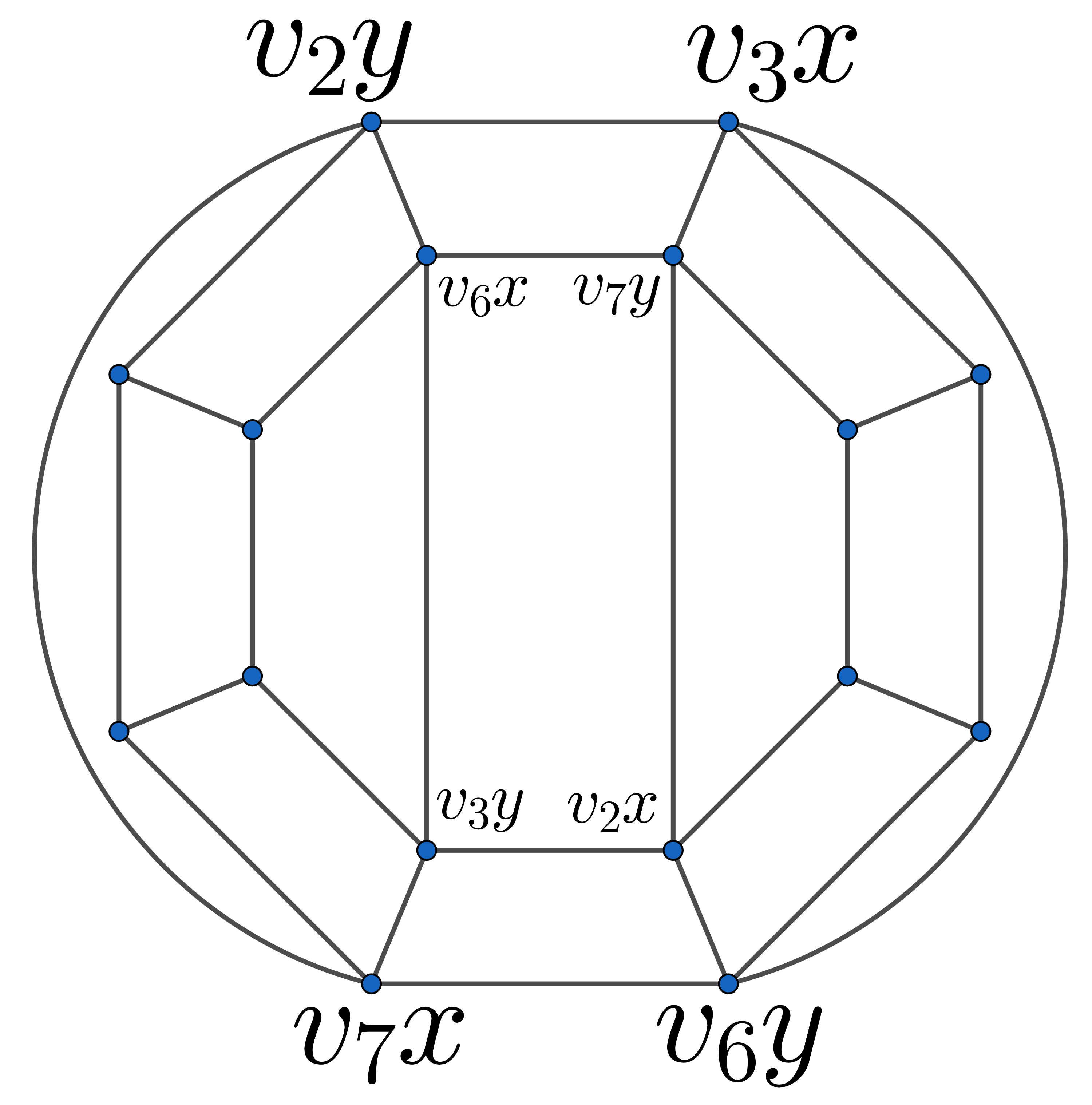}
\caption{$H\q K_2\simeq J\w K_2$.}
\label{fig:pp1}
\end{subfigure}
\begin{subfigure}{0.3\textwidth}
\centering
\includegraphics[width=3.25cm]{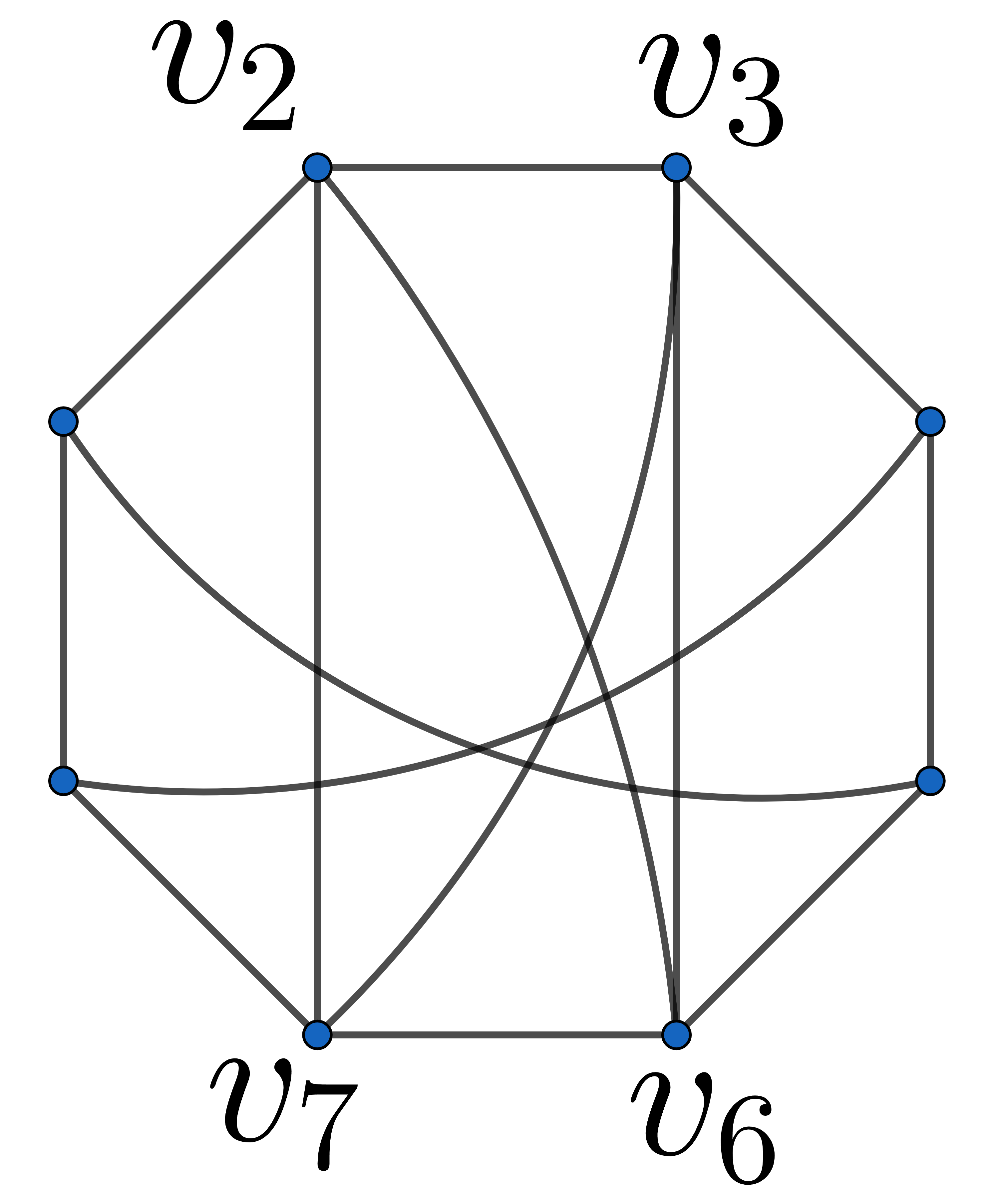}
\caption{$J$.}
\label{fig:JJ1}
\end{subfigure}
\caption{Illustration of Proposition \ref{le:JH} for $\ell=4$. It is worth noting that $H=F_8$ (ladder graph), so that the product $H\q K_2\simeq J\w K_2$ is also isomorphic to $C_4\q P_4$ -- recall Corollary \ref{cor:tri}.
}
\label{fig:ppone}
\end{figure}

%

Second case, $\ell$ is odd. Then $J_{-}$ is the $\ell$-gonal prism (refer again to Proposition \ref{prop:stacked}, in particular Figure \ref{fig:qqone}). We introduce a vertex labelling such that the bases are
\[v_1,v_2,\dots,v_\ell \quad\text{ and }\quad w_1,w_2,\dots,w_\ell,\]
and such that $v_iw_i\in E(J_{-})$ for every $1\leq i\leq\ell$, as in Figure \ref{fig:pptwo}.

If we have the diagonal $u_iu_j$ of the cycle $C_{2\ell}$ in $H$, then $i,j$ are of different parity since $H$ is bipartite, and moreover,
\[v_{(i\mod \ell)}x,v_{(j\mod \ell)}y\]
are adjacent in $J\w K_2$, and likewise $v_{(j\mod \ell)}x,v_{(i\mod \ell)}y$ are adjacent. Since $J\w K_2\simeq H\q K_2$, we also have
\[u_{(i+\ell\mod 2\ell)}u_{(j+\ell\mod 2\ell)}\in E(H).\]
In $J$, we get $v_{(i\mod\ell)}v_{(j\mod\ell)}\in E(J)$. In addition, by construction $(w_{(i\mod\ell)}x,w_{(j\mod\ell)}y)$ and $(w_{(j\mod\ell)}x,w_{(i\mod\ell)}y)$ are edges of the product, thus $w_{(i\mod\ell)}w_{(j\mod\ell)}\in E(J)$.

We cannot have $u_iu_{(i+\ell\mod 2\ell)}\in E(H)$ for some $1\leq i\leq 2\ell$, otherwise $v_ix$ would be adjacent to $v_iy$ in the graph product.

We claim that the diagonals added to the bases of $J_{-}$ cannot cross. By contradiction, suppose that
\[v_{a}v_{b}\quad\text{ and }\quad v_{c}v_{d}\]
cross. Then in $C_{2\ell}$, the vertices
\[u_a,u_c,u_b,u_d,u_{a+\ell},u_{c+\ell},u_{b+\ell},u_{d+\ell}\]
appear in this order around the cycle. In $H$, we have either the diagonals $u_au_b$ and $u_{a+\ell}u_{b+\ell}$, or $u_au_{b+\ell}$ and $u_{a+\ell}u_{b}$, and also either the diagonals $u_cu_d$ and $u_{c+\ell}u_{d+\ell}$, or $u_cu_{d+\ell}$ and $u_{c+\ell}u_{d}$. In any case some diagonals of $H$ cross, contradiction. Hence $J$ is indeed of the form $\widehat{H}\q K_2$, with $\widehat{H}$ outerplanar and Hamiltonian. The claimed conditions on $H,J$ hold in the second case also.

\begin{figure}[ht]
\centering
\begin{subfigure}{0.3\textwidth}
\centering
\includegraphics[width=3.25cm]{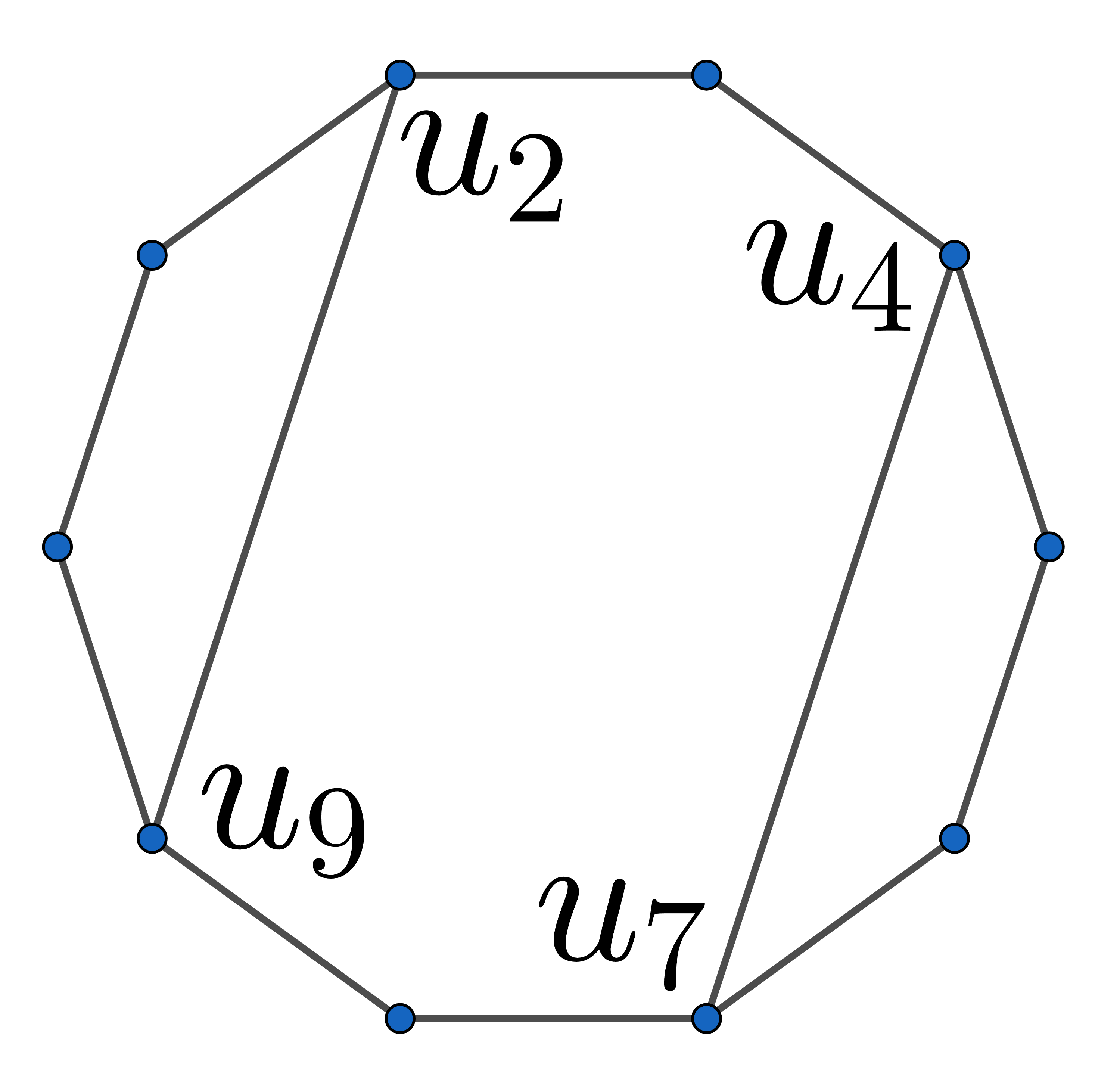}
\caption{$H$.}
\label{fig:HH2}
\end{subfigure}
\begin{subfigure}{0.3\textwidth}
\centering
\includegraphics[width=4.25cm]{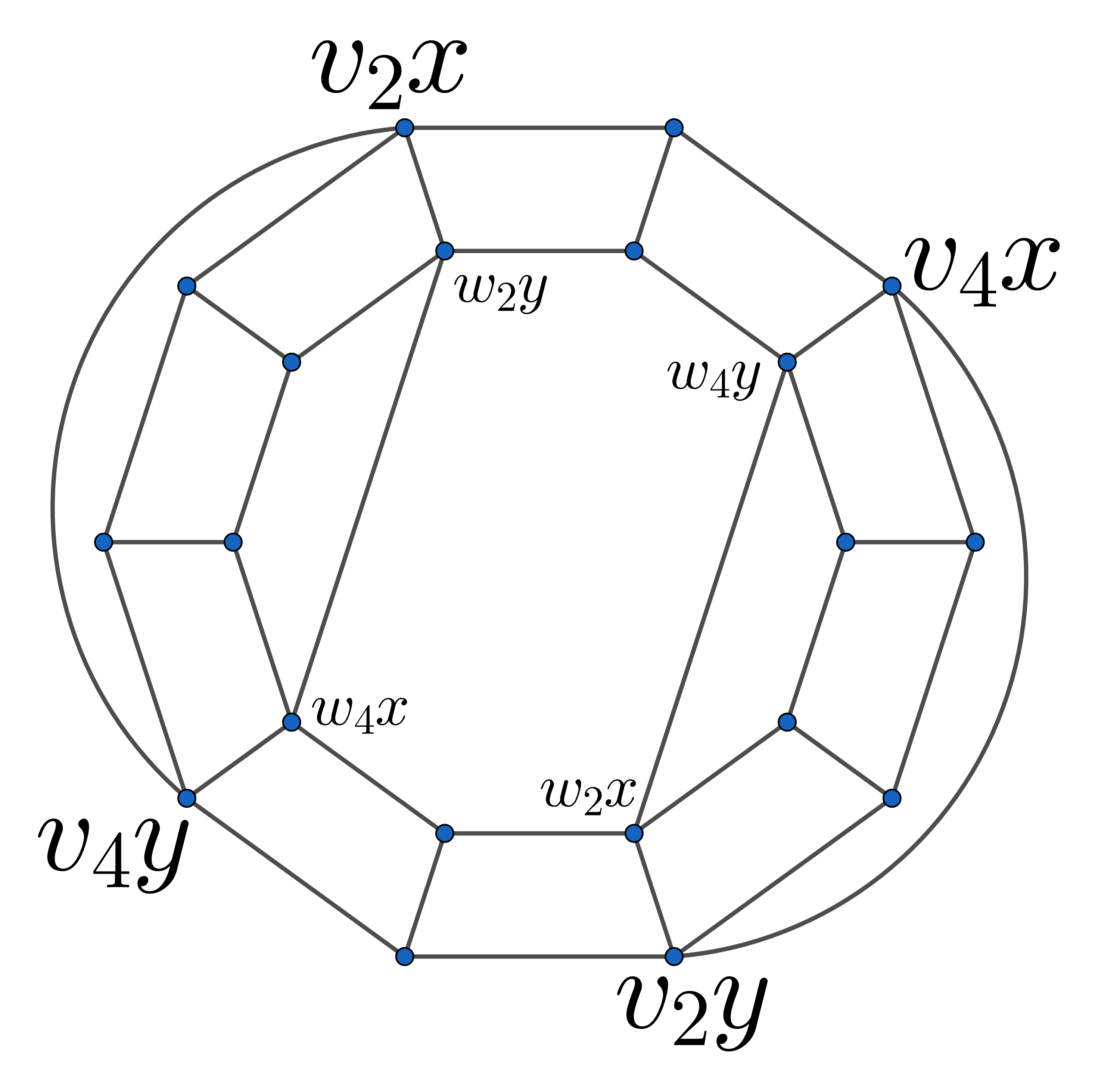}
\caption{$H\q K_2\simeq J\w K_2$.}
\label{fig:pp2}
\end{subfigure}
\begin{subfigure}{0.3\textwidth}
\centering
\includegraphics[width=3.25cm]{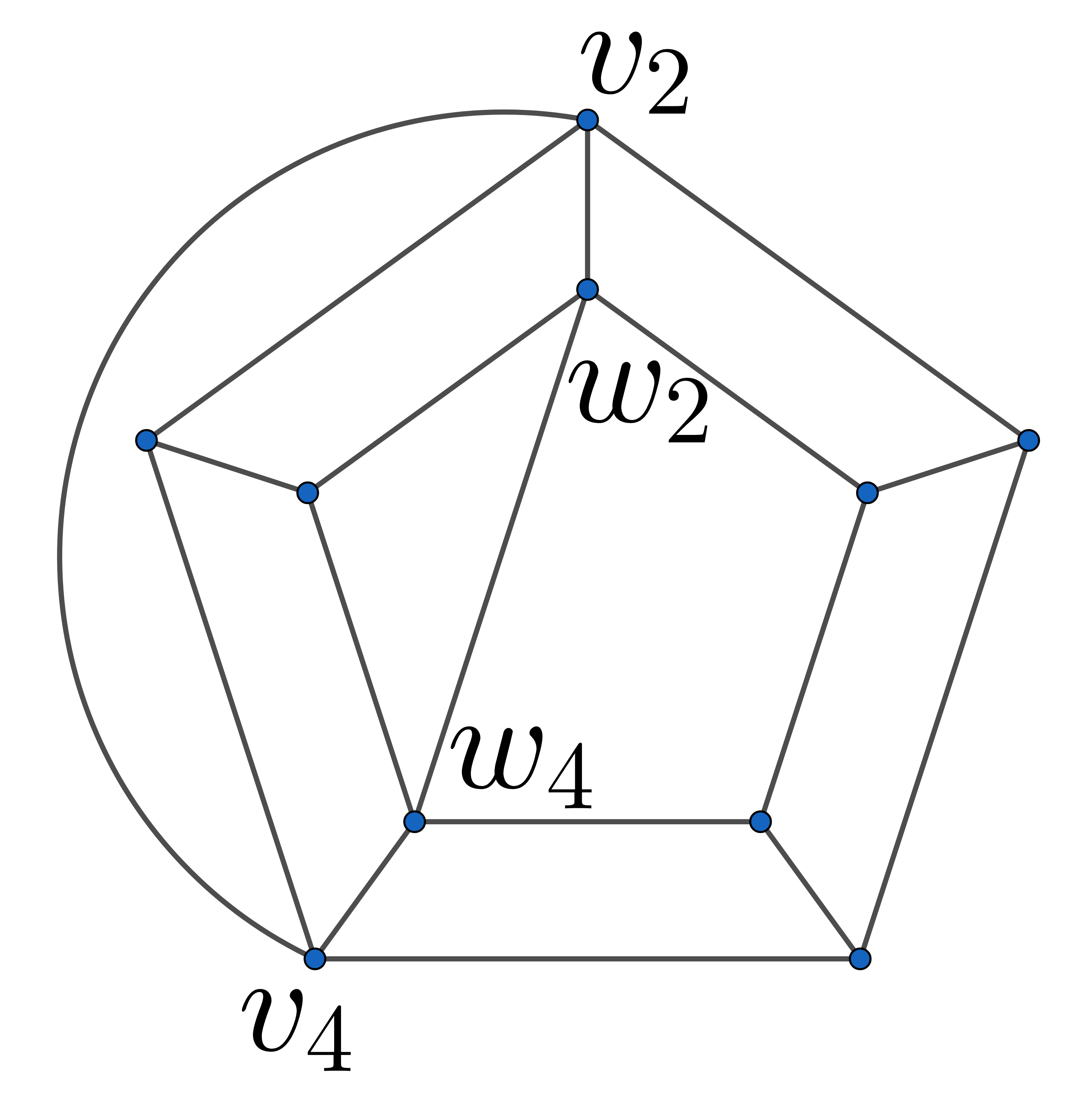}
\caption{$J$.}
\label{fig:JJ2}
\end{subfigure}
\caption{Illustration of Proposition \ref{le:JH} for $\ell=5$.
}
\label{fig:pptwo}
\end{figure}

Vice versa, it is immediate to check that all graphs $H,J$ satisfying the assumptions of the present proposition indeed verify $H\q K_2\simeq J\w K_2$ with the product being a $3$-polytope.
\end{proof}

Note that in Proposition \ref{le:JH}, if $\ell$ is even then $J$ is non-planar, unless $J$ is the tetrahedron and $J\w K_2$ the cube.

On the other hand when $\ell$ is odd, we note that both $J$ and $J\w K_2$ are polyhedra. Such $J$'s constitute a subclass of \ref{eq:c1} from Theorem \ref{thm:0123}, since the other cases \ref{eq:c0}, \ref{eq:c2}, and \ref{eq:c3} are impossible. Indeed, $J$ always contains at least two disjoint odd faces, regardless of the number of diagonals added to the bases of $J_{-}$ to construct $J$.  

\paragraph{Acknowledgements.}
Riccardo W. Maffucci was partially supported by Programme for Young Researchers `Rita Levi Montalcini' PGR21DPCWZ \textit{Discrete and Probabilistic Methods in Mathematics with Applications}, awarded to Riccardo W. Maffucci.

\bibliographystyle{abbrv}
\bibliography{bibgra}
\end{document}